\documentclass[reqno,a4paper]{amsart}

\usepackage{mathrsfs,amsmath,amssymb,graphicx}
\usepackage{stmaryrd}
\usepackage[unicode]{hyperref}
\usepackage{latexsym}
\usepackage{layout}
\usepackage[english]{babel}
\usepackage{color}
\usepackage{fancyvrb}
\usepackage{color}
\usepackage{amsmath,amscd}
\usepackage{multicol} 
\usepackage{enumitem}
\usepackage{subcaption}
\usepackage[percent]{overpic} 
\usepackage{transparent}


\usepackage{tikz}

\captionsetup[subfigure]{labelfont=rm}

\usetikzlibrary{calc}
\usetikzlibrary{matrix,arrows,decorations}

\theoremstyle{plain}
\newtheorem*{theorem*}{Theorem} 
\newtheorem{theorem}{Theorem}[section]
\newtheorem{lemma}[theorem]{Lemma}
\newtheorem{corollary}[theorem]{Corollary} 
\theoremstyle{definition}

\theoremstyle{remark}

\newcommand{\HK}{{\rm HK}}

\newcommand{\pair}{(\sphere,\HK)}

\newcommand{\pairV}{(\sphere,V)}
\newcommand{\pairVn}{(\sphere,V_n)}
\newcommand{\pairVmn}{(\sphere,V_{m,n})}
\newcommand{\Z} {\mathbb Z}

\newcommand{\id}{{\rm id}}

\newcommand{\Sbb} {\mathbb S}
\newcommand{\bd}[1]{{\boldsymbol #1}}
\newcommand{\sphere} {\Sbb^3}
\newcommand{\Stwo}{\mathcal{S}}
\newcommand{\p}[1]{{P}_{#1}}
\newcommand{\systemP}{\mathbf{P}}
\newcommand{\systemQ}{\mathbf{Q}}
\newcommand{\uniP}{\mathcal{P}}
\newcommand{\uniQ}{\mathcal{Q}}
\newcommand{\uniD}{\mathcal{D}}
\newcommand{\one}{\mathtt{1}}
\newcommand{\two}{\mathtt{2}}
\newcommand{\three}{\mathtt{3}}
\newcommand{\alphaQ}{\bd \alpha_{\rm Q}}
\newcommand{\alphaP}{\bd \alpha_{\rm P}}
\newcommand{\alphaA}{\bd \alpha_{\rm A}}

\newcommand{\bn}{{\bd n}}

\newcommand{\tind}{(n_\partial,n_\circ)}
\newcommand{\bdtind}{(\bd n_\partial,\bd n_\circ)}
\newcommand{\pind}{(n_\partial,n_\otimes)}
\newcommand{\pindtilde}{(\tilde n_\partial,\tilde n_\otimes)}

\newcommand{\Compl}[1]{E(#1)}

\newcommand{\rnbhd}[1]{\mathfrak N(#1)} 
\newcommand{\fron}{\partial_f}

\newcommand{\Sym}[2][\sphere]{\mathcal{MCG}(#1, #2)}
\newcommand{\pSym}[2][\sphere]{\mathcal{MCG}_+(#1, #2)}

\newcommand{\Aut}[1]{{\mathcal Homeo}(#1)}
\newcommand{\pAut}[1]{{\mathcal Homeo_+}(#1)}
\newcommand{\nAut}[1]{{\mathcal Homeo_-}(#1)} 
\newcommand{\Emb}[1]{{\mathcal Emb}_0(#1)}

\newcommand{\rel}{{\rm rel\,}}

\newcommand{\MCG}[1]{\mathcal{MCG}(#1)}
\newcommand{\pMCG}[1]{\mathcal{MCG}_+(#1)}

\newcommand{\fourone}{\mathbf{4_1}}
\newcommand{\sixone}{\mathbf{6_1}}
\newcommand{\sixten}{\mathbf{6_{10}}}
\newcommand{\twoone}{\mathbf{2_1}} 
\newcommand{\wheel}{W^g}
\newcommand{\pairwheel}{(\sphere,\wheel)}
\newcommand{\hpairn}{(\sphere,H_n)}
\newcommand{\hpairm}{(\sphere,H_m)} 

\newcommand{\sghk}{\Gamma_\HK}

\newcommand{\sgX}{\Gamma_X}
\newcommand{\sgY}{\Gamma_Y}

\newcommand{\hkX}{\HK_X}
\newcommand{\hkY}{\HK_Y}
\newcommand{\hkx}{\HK^X}
\newcommand{\inducedhkX}{(\sphere,\hkX)}
\newcommand{\inducedhkY}{(\sphere,\hkY)}
\newcommand{\inducedhkx}{(\sphere,\hkx)}

\newcommand{\inducedsgY}{(\sphere,\Gamma_Y)}
\newcommand{\inducedsgX}{(\sphere,\Gamma_X)}

\newcommand{\nada}[1]   {}
\newcommand{\cout}[1]   {}

\definecolor{mygray}{rgb}{0.92,0.92,0.92}

\graphicspath{{figures/}}

\numberwithin{equation}{section}
\numberwithin{figure}{section}


\newif\ifdraft
\draftfalse


\title{On unique decomposition of knotted handlebodies}
\author{Giovanni Bellettini}
\address{Dipartimento di Ingegneria dell'Informazione e Scienze Matematiche, Universit\`a di Siena, 53100 Siena, Italy,
and International Centre for Theoretical Physics ICTP,
Mathematics Section, 34151 Trieste, Italy
}
\email{giovanni.bellettini@unisi.it}
\author{Maurizio Paolini}
\address{Dipartimento di Matematica e Fisica, Universit\`a Cattolica del Sacro Cuore, 25121 Brescia, Italy}
\email{maurizio.paolini@unicatt.it}
\author{Yi-Sheng Wang}
\address{Department of Applied Mathematics, National Sun Yat-sen University}
\email{yisheng@math.nsysu.edu.tw}

\date{\today}
\subjclass{primary 57K12, secondary 57K30, 57M15}

\begin{document}

\thanks{Y.-S. W. gratefully acknowledges the support from NSTC, Taiwan (grant no. 110-2115-M-001-004-MY3 and 112-2115-M-110 -001 -MY3 ).}
\
\begin{abstract}
The paper considers the uniqueness question of factorization of a knotted handlebody in the $3$-sphere along decomposing $2$-spheres. We obtain a uniqueness result for factorization along decomposing $2$-spheres meeting the handlebody at three parallel disks. The result is used to examine handlebody-knot symmetry; particularly, the chirality of $\sixten$ in the handlebody-knot table, previously unknown, is determined. In addition, an infinite family of hyperbolic handlebody-knots with homeomorphic exteriors is constructed. 
\end{abstract}

\maketitle
 
\section{Introduction}\label{sec:intro}
A fundamental result by Schubert \cite{Sch:49} 
asserts that every knot can be split, along $2$-spheres meeting the knot at \emph{two} points, into prime knots in a unique way. Schubert's result is subsequently generalized to links by Hashizume \cite{Has:58}. More generally, one may split a knot by $2$-spheres meeting a knot at \emph{$n$} points; this leads to the notion of tangle decomposition and $n$-string prime; see for instance, Gordon-Reid \cite{GorRei:95}, Ozawa \cite{Oza:98}. 
In the context of spatial graphs, Suzuki \cite{Suz:87} shows that a connected
spatial graph can be factorized, 
along $2$-spheres meeting the graph at \emph{one or two} points, into some prime spatial and trivial graphs in a unique manner. 
Motohashi \cite{Moto:98}, \cite{Moto:07}, 
considering $2$-spheres meeting the spatial graph at \emph{three} points, also obtains unique factorizations for various types of spatial graphs.

Aforementioned unique factorization theorems assert the prime factors are unique, but the $2$-spheres splitting the knot or spatial graph are in general not unique. However, adding constraints to splitting $2$-spheres may lead to useful uniqueness result.
For example, Flapan-Mellor-Naimi \cite{FlaMelNai:12}, considering balls meeting a spatial graph 
in an arc that contain all local knots, obtains a uniqueness theorem for such decomposing balls, which is then used to analyze spatial graph symmetry.  
In the context of handlebody-knots, Ishii-Kishimoto-Ozawa \cite{IshKisOza:15} and Koda-Ozawa \cite{KodOzaGor:15} 
study decomposing $2$-spheres that meet the handlebody-knot at \emph{two parallel disks}, and show that there is 
a unique maximal unnested set of such $2$-spheres for every handlebody-knot, up to certain moves; their result is employed to detect inequivalence and chirality of genus two handlebody-knots in the Ishii-Kishimoto-Moriuchi-Suzuki handlebody-knot table \cite{IshKisMorSuz:12}. 
The present paper concerns decomposition of handlebody-knots 
via $2$-spheres meeting the handlebody at \emph{three parallel disks}. 

  
\subsection*{Preliminaries}
A genus $g$ \emph{handlebody-knot} $\pair$ is an embedding of a genus $g$ handlebody $\HK$ in the oriented $3$-sphere $\sphere$.
Recall from Ishii-Kishimoto-Ozawa \cite{IshKisOza:15} 
that an \emph{$n$-decomposing sphere $\Stwo$} of $\pair$ is a $2$-sphere $\Stwo\subset \sphere$ that meets $\HK$ at $n$ disjoint disks so that the $n$-punctured sphere $P:=\Stwo\cap \Compl\HK$ in the exterior $\Compl\HK:=\overline{\sphere-\HK}$ is incompressible and non-boundary parallel. 
Conversely, an incompressible, non-boundary parallel, meridional $n$-punctured sphere $P\subset\Compl\HK$ 
gives rise to an $n$-decomposing sphere; 
thus we call $P$ an \emph{$n$-decomposing surface}.

A handlebody-knot is \emph{$n$-decomposable} if it admits an $n$-decomposing sphere. When $n=1$, this leads to the prime decomposition in Suzuki \cite{Suz:75}. In the case $n>1$ and $g>2$, the notion of $n$-decomposability is too broad 
to hope for a uniqueness result. For instance, 
Figs.\ \ref{fig:decomposition_i}, 
\ref{fig:decomposition_ii} 
illustrate two ways to decompose the 
handlebody-knot $\fourone$ in the handlebody-knot 
table \cite{IshKisMorSuz:12}; also, a trivial handlebody-knot of genus $3$ is $n$-decomposable for every $n\geq 2$; see Fig.\ \ref{fig:trivial_n_decomposition}, for $n=2,3,4$. 
On the other hand, imposing additional conditions on $\partial P$ in relation to $\HK$ can lead to useful uniqueness results. 
\begin{figure}
\begin{subfigure}[b]{.32\linewidth}
\centering
\begin{overpic}[scale=.12,percent]{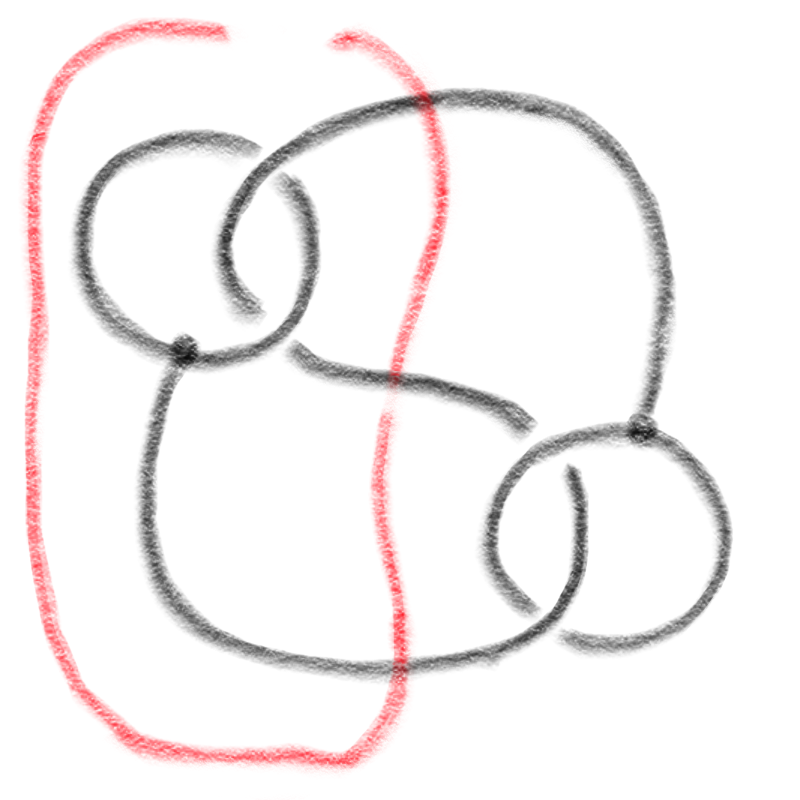}
\put(30,92){$P$}
\end{overpic}
\caption{Decomposing $\fourone$ I.}
\label{fig:decomposition_i}
\end{subfigure}
\begin{subfigure}[b]{.32\linewidth}
\centering
\begin{overpic}[scale=.12,percent]{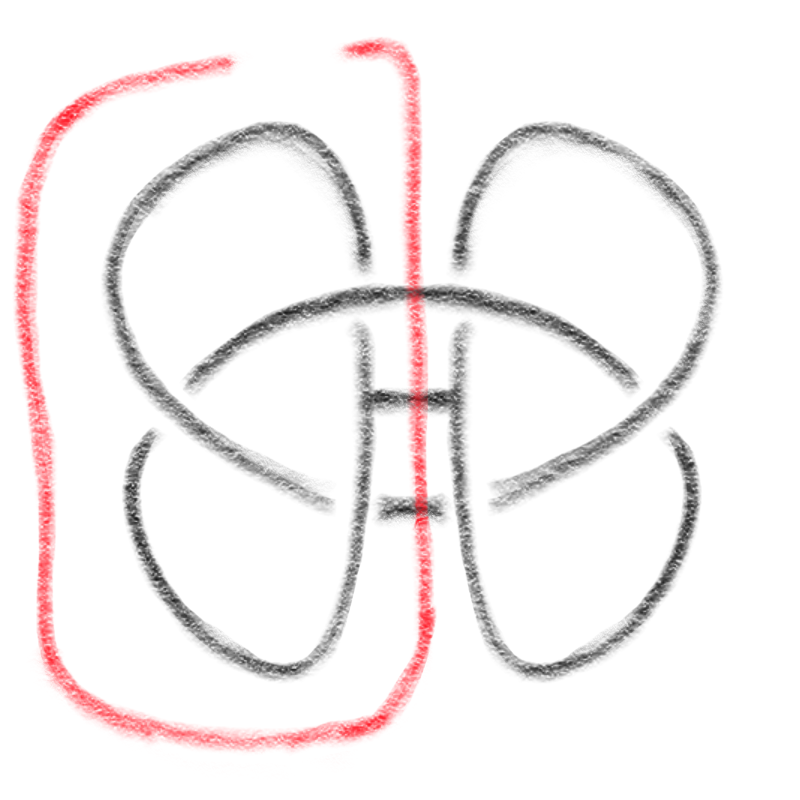}
\put(32,90){$P$}
\end{overpic}
\caption{Decomposing $\fourone$ II.}
\label{fig:decomposition_ii}
\end{subfigure}
\begin{subfigure}[b]{.32\linewidth}
\centering
\begin{overpic}[scale=.13,percent] {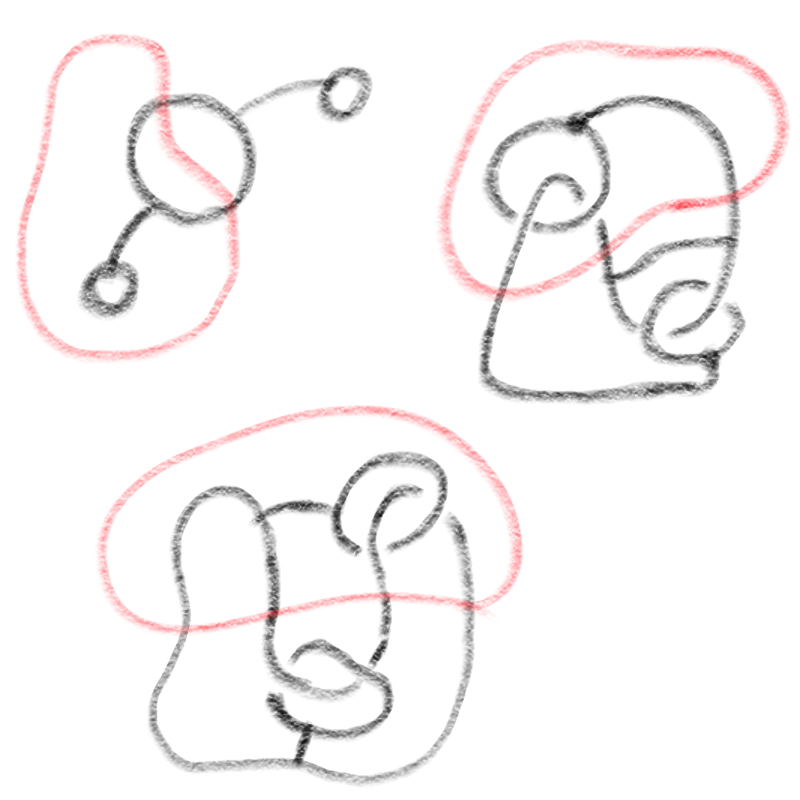}
\put(27,55){\footnotesize $n=2$}
\put(80,43){\footnotesize $n=3$}
\put(62,19){\footnotesize $n=4$}
\end{overpic}
\caption{$g=3$; $n=2,3,4$.}
\label{fig:trivial_n_decomposition}
\end{subfigure}
\caption{}
\end{figure}
 
A \emph{$\p n$-sphere} is an $n$-decomposing sphere $\Stwo$ with 
disks in $\Stwo\cap\HK$ mutually parallel in $\HK$ (see Fig.\ \ref{fig:decomposition_i} for $n=3$).
A \emph{$\p n$-surface} is an $n$-decomposing surface $P$ with 
components of $\partial P$ mutually parallel in $\partial\HK$.     
Every $\p n$-sphere induces a $\p n$-surface and vice versa. The handlebody-knot $\pair$ is said to be \emph{$\p n$-decomposable}
if $\pair$ admits a $\p n$-decomposing sphere and otherwise \emph{$\p n$-indecomposable}. 
Note that $\p 1$-decomposability is the same as $1$-decomposability,
whereas $\p 2$-decomposability is equivalent to $2$-decomposability only when $g\leq 2$. 
In addition, if $g=2$, $\Compl\HK$ is $\partial$-reducible 
if and only if 
$\pair$ is $\p 1$-decomposable by Tsukui \cite{Tsu:75}, yet the assertion fails when $g>2$ by Suzuki \cite{Suz:75}. 
We remark that a $\p 2$-decomposing sphere is precisely the \emph{knotted handle decomposing sphere} in Ishii-Kishimoto-Ozawa 
\cite{IshKisOza:15}.

A \emph{$\p n$-system} $\systemP$ of $\pair$ is a set of disjoint, mutually 
non-parallel
$\p n$-surfaces, and $\systemP$ is 
\emph{maximal}
if $\systemP$ is not contained in another $\p n$-system as a proper subset. 
The result of cutting $\Compl\HK$ along the $\p n$-surfaces in $\systemP$ is called a $\p n$-decomposition of $\pair$.

When $g=2$, the uniqueness of a maximal $\p 1$-system of $\pair$ follows from Tsukui \cite{Tsu:70}, yet for general $g$, the uniqueness remains open; see Suzuki \cite{Suz:75}. On the other hand, it follows from Ishii-Kishimoto-Ozawa \cite{IshKisOza:15} and Koda-Ozawa \cite{KodOzaGor:15} that a maximal unnested $\p 2$-system of $\pair$ is unique, 
up to annulus move. 
\subsection*{Main Results} 
In this paper we prove a uniqueness theorem for a maximal $\p 3$-system of $\pair$ and use it to
study the symmetry of $\p 3$-decomposable handlebody-knots. A topological characterization of such handlebody-knots is also given.
\begin{theorem}[{Theorem \ref{teo:uniqueness}}]\label{intro:teo:uniqueness}
Given a $\p 3$-decomposable handlebody-knot $\pair$, if $\pair$ is $2$-indecomposable and the exterior $\Compl\HK$ is $\partial$-irreducible, then, up to isotopy, $\pair$ admits a unique maximal $\p 3$-system $\systemP$.
\end{theorem}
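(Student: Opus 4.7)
The plan is a standard cut-and-paste comparison of two maximal $\p 3$-systems $\systemP$ and $\systemQ$. After isotoping $\systemQ$ to minimize $|\systemP\cap\systemQ|$ in its isotopy class (and also minimizing $\partial\systemP\cap\partial\systemQ$ on $\partial\HK$), the aim is to prove $\systemP\cap\systemQ=\emptyset$. Maximality then forces each component $Q_j\in\systemQ$ to be parallel in $\ComplHK$ to some component $P_i\in\systemP$---otherwise $\systemP\cup\{Q_j\}$ would be a strictly larger $\p 3$-system---and by symmetry $\systemP$ and $\systemQ$ are isotopic.

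Circle intersections are removed by the usual innermost-disk argument. An innermost circle $C\subset\systemP\cap\systemQ$ lying in some $P_i$ bounds a disk $D\subset P_i$; by incompressibility of $\systemQ$, $C$ also bounds a disk $D'$ on the corresponding $Q_j$, and the sphere $D\cup D'$ bounds a ball in $\ComplHK$ (using that $2$-indecomposability together with $\partial$-irreducibility precludes a splitting sphere, so $\ComplHK$ is irreducible). This ball yields an isotopy reducing $|\systemP\cap\systemQ|$, contradicting minimality.

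The real work lies in eliminating arc intersections via an outermost-arc argument. Let $\alpha$ be an outermost arc on some $P_i$, cobounding with an arc $\beta\subset\partial P_i\subset\partial\HK$ a disk $D\subset P_i$ whose interior is disjoint from $\systemQ$. View $D$ as a $\partial$-compression disk for the component $Q_j$ of $\systemQ$ containing $\alpha$. Since $Q_j$ is a pair of pants, a $\partial$-compression along an essential arc produces---depending on which pair of boundary circles $\alpha$ joins and on its separation behaviour---disks, an annulus, or a disk together with a smaller planar surface. For each outcome, the hypotheses conspire: a disk outcome violates $\partial$-irreducibility of $\ComplHK$ or incompressibility; an essential-annulus outcome is a $2$-decomposing surface, violating $2$-indecomposability; a $\partial$-parallel or otherwise trivial outcome yields an isotopy reducing $|\systemP\cap\systemQ|$, again contradicting minimality. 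The case where $\alpha$ is inessential on $Q_j$ is disposed of by an innermost/outermost swap on the disk it cobounds with $\partial Q_j$. The main obstacle is carrying out this case analysis cleanly: one must control the position of $\beta$ relative to the pair-of-pants region on $\partial\HK$ cobounded by the three parallel boundary circles of $\partial P_i$, and ensure that the parallelism structure on the boundary of a $\p 3$-surface is exploited (rather than lost) after $\partial$-compression. The $2$-indecomposability hypothesis is precisely what forbids the $\partial$-compression from producing a genuine $2$-decomposing annulus, and is therefore essential to the argument.
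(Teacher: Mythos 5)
Your skeleton---minimize $\uniP\cap\uniQ$, remove circles and arcs, then let maximality force each component of $\systemQ$ to be parallel to one of $\systemP$---is the same as the paper's, and your use of $2$-indecomposability to exclude essential meridional annuli is exactly the role it plays there. The gap is in the arc-elimination step, and it is the heart of the matter. Your outermost-arc argument needs an arc of $\uniP\cap\uniQ$ that is \emph{inessential} on some $P_i$, so that it cuts off a disk $D\subset P_i$ serving as a $\partial$-compressing disk for $Q_j$. Nothing guarantees such an arc exists: since the $P_i$ and $Q_j$ are pairs of pants, every arc of intersection can be essential on both sides (e.g.\ parallel families joining distinct boundary circles), and then there is no $\partial$-compressing disk on either surface and your case analysis never begins. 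This residual case is exactly what the paper's Lemma \ref{lm:two_p3_surfaces} is for: the mutual parallelism of the three components of $\partial P$ forces the endpoints of the arcs on each component of $\partial Q$ to follow the label pattern $1,2,3,3,2,1,\dots$ of \eqref{eq:pattern}; a counting argument first excludes separating essential arcs, and then each of the six possible patterns in \eqref{eq:possible_patterns} forces either a separating arc or a pair of immediately parallel $(\one,\two)$- or $(\two,\three)$-arcs, the latter being impossible by Lemma \ref{lm:arcs}\ref{itm:not_parallel_types} because a knot exterior admits no properly embedded disk with non-integral boundary slope. None of this combinatorial input is recoverable from a generic analysis of $\partial$-compressions of a pair of pants.

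A second, smaller gap concerns the circles: an innermost circle of $\uniP\cap\uniQ$ on a pair of pants $P_i$ need not bound a disk in $P_i$; it may be essential, i.e.\ parallel to a component of $\partial P_i$. Your innermost-disk paragraph assumes it bounds a disk, so circles essential on both sides are never removed. The paper removes them (Lemma \ref{lm:two_systems}) by the very mechanism you reserve for arcs: an outermost essential circle yields an incompressible meridional annulus $A_p\cup A_q$, which $2$-indecomposability forces to be $\partial$-parallel, cutting off a solid torus through which $Q$ can be isotoped. (Also, irreducibility of $\Compl\HK$ holds for any connected handlebody in $\sphere$ and needs neither of your hypotheses.)
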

If $g=2$, the $2$-indecomposability can be replaced with $\p 2$-indecomposability and $\partial$-irreducibility can be dropped. 
The topology of $\p 3$-decomposable handlebody-knots is examined in Section \ref{sec:topology}, and the following is obtained.
\begin{theorem}\label{intro:teo:topology} 
Let $\systemP$ be a $\p 3$-system of $\pair$, and $\uniP$ the union of its members.      
\begin{enumerate}
\item If $\Compl \HK$ admits an essential disk (resp.\ incompressible torus), then it admits an essential disk (resp.\ incompressible torus) disjoint from $\uniP$. 
\item If $\Compl \HK$ admits an essential annulus, then it admits an essential disk or annulus disjoint from $\uniP$.
\end{enumerate}
\end{theorem}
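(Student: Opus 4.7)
My approach is to choose a representative $F$ of the essential type under consideration (disk, incompressible torus, or essential annulus) realizing the minimum of $|F \cap \uniP|$ in its isotopy class, with the annulus case counting both arc and circle intersections, and then rule out any remaining intersection. I rely on three structural facts: each component of $\uniP$ is a thrice-punctured sphere, so every essential simple closed curve on it is parallel to one of its three boundary circles on $\partial\HK$; each such component is incompressible in $\Compl\HK$ by definition of a $\p 3$-surface; and $\Compl\HK$ is irreducible, because any $2$-sphere in $\Compl\HK$ bounds a ball in $\sphere$ disjoint from $\HK$.

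The usual innermost-circle exchange eliminates any circle $c$ of $F \cap \uniP$ that bounds a disk on $F$ or on $\uniP$: by incompressibility of the other surface a matching disk appears, and the resulting $2$-sphere bounds a ball in $\Compl\HK$ across which $F$ can be isotoped to reduce $|F \cap \uniP|$. Every surviving circle is therefore essential in both $F$ and $\uniP$, and in particular boundary-parallel on its component of $\uniP$. This immediately finishes the disk case of (1), since a disk admits no essential simple closed curves. For the torus case of (1), the surviving circles are mutually parallel essential curves on the torus $F$, cutting $F$ into annular pieces $A_F^1, \dots, A_F^n$, each lying in a single component of $\Compl\HK - \uniP$. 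I would use an outermost-annulus argument on $\uniP$ combined with the thrice-punctured-sphere structure to show that some annular piece $A_F^i$ has its two boundary circles lying on the same component $P \subset \uniP$ and cobounding an annulus $A_P \subset P$; then $A_F^i \cup A_P$ bounds a solid torus in $\Compl\HK$ (by incompressibility of $\uniP$ and irreducibility of $\Compl\HK$), across which $F$ can be isotoped to reduce $|F \cap \uniP|$, contradicting minimality. Hence $F \cap \uniP = \emptyset$ and $F$ itself is the required torus.

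Part (2) follows the same template with an additional outermost-arc argument: an outermost arc of $F \cap \uniP$ on the annulus $F$ bounds a disk $\Delta \subset F$ with $\partial \Delta \subset \uniP \cup \partial\HK$, giving a boundary-compressing disk for $\uniP$. Cutting a thrice-punctured sphere along such an essential arc produces a disk together with an annulus (or two disks); by incompressibility of $\uniP$ and irreducibility of $\Compl\HK$ one obtains either a reduction of $|F \cap \uniP|$ or the desired essential disk in $\Compl\HK$ disjoint from $\uniP$. The main obstacle I anticipate is the combinatorial step in the torus case, namely verifying that the two boundary circles of some annular piece of $F$ must be parallel on a single component of $\uniP$; a priori they could lie on distinct components of $\uniP$ meeting the same complementary region, and ruling this out requires exploiting both the $\p 3$ structure (three \emph{parallel} disks) and the full incompressibility hypotheses on $\uniP$.
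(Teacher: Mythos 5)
There is a genuine gap, and it sits at the very first step of part (1). You treat the disk case as if $F\cap\uniP$ consisted only of circles: after the innermost-circle exchange you declare the disk case finished ``since a disk admits no essential simple closed curves.'' But an essential disk $D\subset\Compl\HK$ has $\partial D\subset\partial\HK$, and each component of $\partial\uniP$ is a separating curve on $\partial\HK$ that $\partial D$ may be forced to cross essentially; hence $D\cap\uniP$ in general contains \emph{arcs}, and these cannot be isotoped away by any circle argument. In the paper this is the entire content of the disk lemma: one shows $\vert\partial D\cap\partial P\vert=6k$, takes an outermost arc $\alpha$ of $D\cap P$ in $D$, rules out the $(\one,\two)$-, $(\two,\three)$- and $\two$-types by producing compressing disks of $P$ or violating minimality, and then separately kills the remaining $\one$- and $\one^\ast$-possibilities (the $\one$-arc via the no-adjacent-parallel-arc lemma and the count $\vert D\cap P\vert\geq 3$, the $\one^\ast$-arc via a surgered disk that must cut off a ball). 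None of this is present in your proposal, and since your part (2) falls back on part (1) to upgrade ``some essential disk exists'' to ``an essential disk disjoint from $\uniP$ exists,'' the gap propagates there as well.

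The circle-only portions of your argument (torus case, and loops in the annulus case) do follow the paper's strategy, but the two steps you explicitly defer are exactly where the work lies. For the torus, the paper does not look for an annular piece of $F$ with boundary on a single component of $\uniP$ cobounding an annulus there; it first disposes of any piece whose two boundary circles are parallel \emph{in $P$} (the resulting torus is compressible by minimality, giving a solid torus to push across), and then derives a contradiction from the assumption that every piece has non-parallel boundary, using that the boundary circles must be parallel to $C_1,C_2,C_3$ and that consecutive annuli of $T$ lie on alternating sides of $P$. For the annulus, the arcs are eliminated not by a single boundary-compression but by a case analysis on the cyclic labelling pattern $1,2,3,3,2,1$ of $\partial A\cap\partial\uniP$ together with the $\partial$-incompressibility of $A$ and the arc-type lemma; your ``either reduce or get an essential disk'' dichotomy is plausible in spirit but is not justified as stated. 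You should supply the arc analysis for the disk case and make the two deferred combinatorial steps precise before this can count as a proof.
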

Theorem \ref{intro:teo:topology}
implies that, when $g=2$, $\Compl\HK$ is always $\partial$-irreducible (see Corollary \ref{cor:genus_two_b_irre}). It also allows us to produce an infinite family of hyperbolic handlebody-knots with homeomorphic exteriors (see Fig.\ \ref{fig:hyp_n}) by Thurston's hyperbolization theorem. Handlebody-knots in the infinite families in Motto \cite{Mott:90} and Lee-Lee \cite{LeeLee:12} all admit essential annuli in their exteriors, and hence are non-hyperbolic. On the other hand, the argument in Section \ref{subsubsec:hyp} provides an alternative way to see the inequivalence of Motto's handlebody-knots, which are all $\p 3$-decomposable. 

The symmetry of a handlebody-knot can be measured by the \emph{symmetry group}: 
$\Sym\HK:=\pi_0(\Aut{\sphere,\HK})$, where $\Aut{\sphere,\HK}$ is the space of p.l. homeomorphisms of $\pair$. A great deal of work has been done for genus one handlebody-knots, equivalent to knots, and the symmetry group structure has been determined for many classes of knots (see Kawasaki \cite{Kaw:96}). Notably, it is now known that if it is finite, then $\Sym\HK$ is either cyclic or dihedral. Much less is understood in the higher genus case. Funayoshi-Koda \cite{FunKod:20}, however, shows that when $g=2$, $\Sym\HK$ is finite if and only if $\Compl\HK$ contains no incompressible torus, namely \emph{atoroidal}. Efforts have been made to 
study the finite symmetry group structure (see \cite{Wan:21}, \cite{Wan:23}, \cite{Wan:24}), yet a classification is still to be obtained. For the toroidal case, see Koda \cite{Kod:15}. We investigate the symmetry of a $\p 3$-decomposable handlebody-knot in Section \ref{sec:applications} and prove the following.    
\begin{theorem}[{Theorem \ref{teo:genus_two_symmetry}}]\label{intro:teo:symmetry_two}
Given a $\p 3$-decomposable handlebody-knot $\pair$ of genus $g=2$, if $\Compl\HK$ is atoroidal, then 
\[\Sym\HK<\Z_2\oplus \Z_2,\] 
and 
$\Sym\HK\simeq \Z_2\oplus \Z_2$ 
if and only if $\pair$ is equivalent to $\fourone$ in 
the handlebody-knot table 
\cite{IshKisMorSuz:12}. 
\end{theorem}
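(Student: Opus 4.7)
The plan is to bootstrap the uniqueness of the maximal $\p 3$-system into a faithful action of $\Sym\HK$ on a rigid combinatorial object whose symmetries are visibly contained in $\Z_2\oplus\Z_2$. To begin, by Corollary \ref{cor:genus_two_b_irre} the exterior $\Compl\HK$ is automatically $\partial$-irreducible when $g=2$, and the $2$-indecomposability hypothesis in Theorem \ref{intro:teo:uniqueness} may be weakened to $\p 2$-indecomposability. If $\pair$ is $\p 2$-indecomposable, Theorem \ref{intro:teo:uniqueness} supplies a unique maximal $\p 3$-system $\systemP$; if instead $\pair$ is $\p 2$-decomposable, the Ishii-Kishimoto-Ozawa uniqueness result \cite{IshKisOza:15} supplies a unique maximal unnested $\p 2$-system. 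In either case the union $\uniP$ is an isotopy-canonical surface system, so every class in $\Sym\HK$ has a representative preserving $\uniP$ setwise, yielding a homomorphism $\Phi:\Sym\HK\to\op{Aut}(\pair,\uniP)/\text{isotopy}$.

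Next I would analyze the combinatorics of the action. The three boundary curves of a $\p 3$-surface are mutually parallel simple closed curves on the genus-two surface $\partial\HK$, hence cobound two pairs-of-pants and canonically distinguish a \emph{middle} curve as the one separating the other two. Consequently the permutation of the three disks induced by any symmetry is at most the $\Z_2$ swapping the two outer disks. Independently, orientation-reversal of the ambient $\sphere$ contributes a second $\Z_2$ factor; together these exhaust the combinatorial symmetries of the decomposition data, so $\op{Im}\Phi\subseteq\Z_2\oplus\Z_2$.

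To finish the subgroup bound I would show $\Phi$ is injective. By Funayoshi-Koda \cite{FunKod:20} atoroidality of $\Compl\HK$ forces $\Sym\HK$ to be finite, so it suffices to kill the kernel. An element of $\ker\Phi$ is represented by a homeomorphism fixing each component of $\uniP$ and each complementary piece; by Theorem \ref{intro:teo:topology} atoroidality and $\partial$-irreducibility descend to each piece, and the boundary pattern cut by the pairs-of-pants is rigid enough that the mapping class group of each piece rel boundary vanishes, forcing $\Sym\HK\hookrightarrow\Z_2\oplus\Z_2$. For the characterization of $\fourone$, if $\Sym\HK\simeq\Z_2\oplus\Z_2$ then both the outer-disk swap and the orientation-reversal are realized, so the two tangle pieces cut off by the $\p 3$-sphere must be homeomorphic and each amphicheiral. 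In the genus-two, atoroidal, $\p 3$-decomposable setting the resulting symmetric building blocks leave essentially one possibility, which by inspection of the handlebody-knot table \cite{IshKisMorSuz:12} is $\fourone$; the converse is verified directly from Fig.\ \ref{fig:decomposition_i}.

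The hard part will be the injectivity step: ruling out hidden symmetries supported on a single cut piece that fix its pants boundary. This is precisely where atoroidality is genuinely needed, and it will likely require invoking the mapping class group analyses in the author's earlier works \cite{Wan:21}, \cite{Wan:23}, \cite{Wan:24}, together with a careful enumeration of the prime pieces that can appear in a genus-two $\p 3$-decomposition. A secondary obstacle is the equality-case identification of $\fourone$, which requires knowing enough about the possible amphicheiral building blocks to pin down a single handlebody-knot rather than a family.
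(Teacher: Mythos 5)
Your proposal reproduces the paper's overall skeleton (canonical maximal system, induced action on the decomposition, a combinatorial $\Z_2\oplus\Z_2$ bound on the image, and a rigidity statement killing the kernel), but the two steps you yourself flag as hard are exactly where the paper's real content lies, and your sketches of them do not work as stated. First, the kernel-triviality step: you assert that ``the mapping class group of each piece rel boundary vanishes.'' This is false in general --- for a Haken piece $X$ the group $\MCG{X,\rel\partial X}$ is torsion-free (Hatcher--McCullough) but typically nontrivial --- and even if it were true you would still need to assemble the piecewise trivializations into a single ambient isotopy of $\pair$. The paper's Lemma \ref{lm:orien_pre_some_end} instead argues on the boundary: finiteness of $\Sym\HK$ (Funayoshi--Koda) lets one apply Nielsen realization to $f_X\vert_{\partial\hkx}$ on the closed surface $\partial\hkx$; the realizing isometry preserves two geodesics meeting in one point and is therefore the identity; Yagasaki's contractibility of the embedding space upgrades this to an isotopy rel $P$; and injectivity of the restriction to $\pMCG{\partial\HK}$ (Lemma \ref{lm:pair_surface_injective}, using torsion-freeness of $\MCG{\Compl\HK,\rel\partial\Compl\HK}$) finishes. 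None of these ingredients appear in your proposal, and without finite order of the element the statement you need is simply unavailable. Note also that the relevant action is on the two \emph{ends} of the decomposition rather than on the three disks of one sphere, and that the order-two bound on orientation-reversing classes (your ``second $\Z_2$ factor'') is itself a consequence of the same rigidity lemma applied to $f^2$.

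Second, identifying the equality case with $\fourone$ by ``inspection of the handlebody-knot table'' is not a proof: there is no a priori crossing bound on the pieces, so the table (which stops at six crossings) cannot be exhaustive for them. The paper's route (Lemmas \ref{lm:g_two_orien_rev_one_end} and \ref{lm:g_two_orien_rev_some_end}) is to realize an orientation-reversing symmetry preserving an end by an involution of $\sphere$ whose fixed-point set is a $2$-sphere, and then to run a fixed-point analysis on the induced spine: atoroidality forces the two arc--ball pairs cut off by the reflection sphere to be trivial, which pins the end's spatial graph down to $\twoone$ (a Hopf link with a tunnel); gluing two such ends along the common $\p 3$-surface yields $\fourone$. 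Your ``homeomorphic and amphicheiral pieces'' observation is a correct necessary condition but nowhere near strong enough to single out one handlebody-knot. Finally, the $\p 2$-decomposable branch of your case split is vacuous here: atoroidality already rules out $\p 2$-decomposability in genus two, so the detour through Ishii--Kishimoto--Ozawa is unnecessary.
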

Given a $\p 3$-system $\systemP$ of $\pair$, an \emph{end} of the $\p 3$-decomposition is a component that meets only one member of $\systemP$. Suppose it admits $e$ ends $X_1,\dots, X_e$. Then by Lemma \ref{lm:end} $X_i\cap\HK$ is a union of an 
annulus and a once-punctured closed surface of genus $g_i$. 
If $\sum_{i=1}^e g_i=g$, we say $\systemP$ is \emph{semi-full}. A \emph{semi-fully} $\p 3$-decomposable $\pair$ is one that admits a semi-full $\p 3$-system. When $g=2$, every $\p 3$-system is semi-full; for $g>2$, see Figs.\ \ref{fig:pairwheel} and \ref{fig:wheelfive} for an example.  

\begin{theorem}[{Theorem \ref{teo:higher_genus_symmetry}}]\label{intro:teo:symmetry_higher}
Suppose $\pair$ is $2$-indecomposable, semi-fully $\p 3$-decomposable. 
If $\Compl\HK$ is $\partial$-irreducible, then for every finite subgroup $H<\Sym\HK$, the order $\vert H\vert$ divides $e$; if in addition $e=g$ and $\Compl\HK$ is atoroidal, then $\vert\Sym\HK\vert$ divides $g$. 
\end{theorem}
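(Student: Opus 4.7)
The plan is to exploit the uniqueness of the maximal $\p 3$-system $\systemP$ to obtain a combinatorial $H$-action on the set of ends and then to show this action is free. By Theorem~\ref{intro:teo:uniqueness}, $\systemP$ is unique up to isotopy, so each $h\in\Sym\HK$ has a PL representative preserving $\uniP$. Using equivariant PL techniques for finite group actions on $3$-manifolds with incompressible surfaces, the representatives for a finite subgroup $H<\Sym\HK$ can be chosen coherently, so that $H$ acts on $(\sphere,\HK)$ by PL homeomorphisms preserving $\uniP$. The induced action then permutes the ends $\{X_1,\dots,X_e\}$.

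The crux is to prove freeness of this permutation action. Suppose $h\in H$ stabilizes an end $X_i$. Then $h$ preserves the unique $\p 3$-sphere $S_i$ bounding $X_i$, and by Lemma~\ref{lm:end} the three disks $D_1,D_2,D_3=S_i\cap\HK$ split canonically as the pair $\{D_1,D_2\}$ bordering the annulus component of $X_i\cap\HK$ and the singleton $\{D_3\}$ bordering the once-punctured surface $\Sigma_i$. Hence $h$ fixes $D_3$ and either fixes or swaps $D_1,D_2$. In the swap case, equivariant surgery on $S_i$ along the $h$-invariant annulus between $D_1$ and $D_2$ on $\partial\HK$ produces an $h$-invariant $2$-sphere meeting $\HK$ in two disks; after using $\partial$-irreducibility to ensure incompressibility and non-parallelism, this yields a $2$-decomposing sphere of $\pair$, contradicting $2$-indecomposability. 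In the fix case, $h|_{S_i}$ is isotopic to the identity, and the rigidity of the boundary pattern of $X_i$ together with incompressibility of $\Sigma_i\subset\partial\HK$ forces $h|_{X_i}\simeq\id$; pushing the argument across $S_i$ into the adjacent piece of $\systemP$ and iterating through the decomposition tree propagates this triviality, so $h$ is isotopic to the identity globally. This establishes freeness, and the orbit-stabilizer theorem then gives $\vert H\vert\mid e$.

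For the second assertion, assume $e=g$ and $\Compl\HK$ is atoroidal. By Theorem~\ref{intro:teo:topology}(2) combined with semi-fullness and $2$-indecomposability, no piece of the $\p 3$-decomposition hosts an essential annulus, and hence $\Compl\HK$ itself is free of essential annuli; with $\Compl\HK$ irreducible, $\partial$-irreducible, atoroidal and anannular, $\Sym\HK$ is finite by Johannson's finiteness theorem. Applying the first part to $H=\Sym\HK$ yields $\vert\Sym\HK\vert\mid g$. The principal obstacle throughout is the freeness claim: a careful case analysis of how $h$ permutes $\{D_1,D_2,D_3\}$, with $2$-indecomposability used to eliminate the swap case and $\partial$-irreducibility used to propagate rigidity through the decomposition tree.
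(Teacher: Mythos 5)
Your skeleton for the first assertion---use uniqueness of the maximal $\p 3$-system to get an $H$-action on the set of ends, prove the action is free, and count orbits---is exactly the paper's strategy (Theorem \ref{teo:higher_genus_symmetry} is proved by citing Lemma \ref{lm:orien_pre_some_end} and Burnside's lemma). But the freeness claim, which you correctly flag as the crux, is not actually proved in your sketch, and essentially all of the paper's work lives there. Your ``swap case'' is off on both counts: it is vacuous for an elementary reason you do not give (if $h$ preserves the end $X$, it preserves $X\cap\partial\HK=F_1\cup A_{23}$ and hence its complement $A_{21}\cup F_3$ in $\partial\HK$; swapping $C_2$ and $C_3$ would carry the annulus $A_{21}$ onto the positive-genus surface $F_3$), while the surgery you propose does not produce a $2$-decomposing sphere---attaching the annulus of $\partial\HK$ between the two swapped disks to the $3$-punctured sphere yields a once-punctured torus, and the $2$-sphere made of the two disks and that annulus is boundary-parallel, so $2$-indecomposability is not contradicted. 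More seriously, in the ``fix case'' the assertion that ``rigidity of the boundary pattern'' forces $h\vert_{X_i}\simeq\id$ and that this ``propagates through the decomposition tree'' is not an argument: a homeomorphism preserving the boundary pattern of a piece can be isotopically nontrivial (e.g.\ supported in the once-punctured surface), and finiteness of the order of $h$, which you never use, is indispensable. The paper's Lemma \ref{lm:orien_pre_some_end} realizes the finite-order class on $\partial\hkX$ by an isometry via Nielsen realization, shows the isometry fixes a frame at the intersection of two invariant geodesics and hence is the identity, upgrades this to an isotopy rel $P$ via Yagasaki's theorem, repeats on the other side of $P$, and finally invokes Lemma \ref{lm:pair_surface_injective} (injectivity of restriction to $\partial\HK$ on finite subgroups, resting on Hatcher's theorem and torsion-freeness of $\MCG{\Compl\HK,\rel\partial \Compl\HK}$) to pass from triviality on $\partial\HK$ to triviality in $\Sym\HK$. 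None of these steps is present in, or replaceable by, your appeal to rigidity.

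The second assertion is also not established by your route. From Theorem \ref{intro:teo:topology}(2) you infer that $\Compl\HK$ is anannular, but semi-fullness and $2$-indecomposability do not exclude essential annuli disjoint from $\uniP$: an essential annulus need not be meridional, so it need not yield a $2$-decomposing surface. Indeed the paper notes that Motto's handlebody-knots are $\p 3$-decomposable and all admit essential annuli in their exteriors, so anannularity genuinely fails for the class under consideration, and Johannson's finiteness theorem is not available. The paper instead proves finiteness (Lemma \ref{lm:finiteness}) by observing that a full $\p 3$-system determines a canonical cut system and hence a canonical spine $\sghk$, giving $\Sym\HK\simeq\Sym{\sghk}$, and then invoking the Cho--Koda finiteness theorem for mapping class groups of spatial graphs; you would need to substitute that (or another correct) argument for your anannularity claim.
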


We apply Theorem \ref{intro:teo:symmetry_two} to determine the chirality of  
$\sixten$ in the handlebody knot table \cite{IshKisMorSuz:12} at the end of Subsection \ref{subsubsec:genus_two_classification}, which was previously unknown; see
Ishii-Iwakiri-Jang-Oshiro \cite{IshIwaJanOsh:13}.
Theorem \ref{intro:teo:symmetry_higher} also implies that, if $e=g$ is an odd prime, then $\pair$ is always chiral (Corollary \ref{cor:odd_prime_symmtery}).
\cout{
Lastly, in Section \ref{subsec:motto_hks}, Theorem \ref{intro:teo:uniqueness} is used to distinguish inequivalent handlebody-knots with homeomorphic exteriors, giving an alternative proof for the inequivalence of Motto's handlebody-knots \cite{Mott:90}.
}
\subsection*{Conventions}
We work in the piecewise linear category.
Given a subpolyhedron $X$ of a $3$-manifold $M$, $\overline{X}$, 
$\mathring{X}$, $\mathfrak{N}(X)$, and 
$\partial_f X$ denote the closure, the interior, a regular neighborhood, and 
the frontier of $X$ in $M$, respectively.
The \emph{exterior} $\Compl X$ of $X$ in $M$ is defined to be $\overline{M-\rnbhd{X}}$ 
if $X\subset M$ is of positive codimension, and to $\overline{M-X}$ otherwise. 
By $\vert X\vert$, we understand the number of components in $X$. 
 
A surface $S$ other than a disk in a $3$-manifold $M$ is \emph{essential} 
if it is incompressible and $\partial$-incompressible, while a disk $D\subset M$ is \emph{essential} if $D$ does not cut off from $M$ a $3$-ball. We denote by $(\sphere,X)$ an embedding of $X$ in the oriented $3$-sphere $\sphere$, and 
two embeddings $(\sphere,X_1)$, $(\sphere,X_2)$
are equivalent if there is an orientation-preserving self-homeomorphism of $\sphere$ carrying $X_1$ to $X_2$. We use $\simeq$ to denote equivalent embeddeings as well as homeomorphic spaces. 
Throughout the paper, $\pair$ 
is a genus $g$ handlebody-knot. 

Given two surfaces $P,Q$ in a $3$-manifold $M$,
we denote by $\alphaP$ and $\alphaQ$ the intersection $P\cap Q$ when viewing it as a submanifold in $P$ and $Q$, respectively.
Let $n_\partial$ (resp.\ $n_\circ$) be 
the number of arcs (resp.\ circles) in $P\cap Q$. Then the pair $\tind$ is called the \emph{total index} of $P,Q$.
If we consider only the number $n_\otimes$ of circles in $P\cap Q$ \emph{inessential} in $P$ or $Q$, then the pair $\pind$ is called the \emph{partial index} of $P,Q$.


\section{Unique decomposition theorem}\label{sec:uniqueness}
\subsection{Arc configuration}\label{subsec:arcs}
Let $P\subset\Compl\HK$ be a $\p 3$-surface  
and $Q\subset\Compl\HK$ a surface. 
Consider the set $\mathbb{N}_0\times \mathbb{N}_0$ of non-negative integer pairs, and the partial order $\prec$ on $\mathbb{N}_0\times \mathbb{N}_0$ defined as follows:
$(a,b)\prec  (c,d)$ if 
either $a<c$ or $a=c$ and $b<d$.  
We assume that $P,Q$ are so given that they minimize 
the partial index $\pind$ 
in their isotopy classes.

We denote by $C_1,C_2,C_3$ the components of 
$\partial P$ 
so that $C_2$
is in between $C_1,C_3$ in $\partial\HK$ (see Fig.\ \ref{fig:P_cuts_surface}).  
The circles $C_1,C_2,C_3$ cut $\partial\HK$ into two annuli and two once-punctured closed surfaces of positive genus. Denote by $A_{2i}$ the annulus bounded by $C_2,C_i$, and by $F_i$ the once-punctured closed surface bounded by $C_i$, $i=1$ or $3$. Let $X,Y\subset\Compl\HK$ be the components cut off by $P$. It may be assumed that $F_1,A_{23}\subset X$ and $F_3,A_{21}\subset Y$ (see Fig.\ \ref{fig:P_cuts_surface}). 
The disks bounded by $C_1,C_2,C_3$ cut $\HK$ into four components $V_1,V_{21},V_{23},V_{3}$ 
with $F_i\subset\partial V_i$, $A_{2i}\subset \partial V_{2i}$, $i=1,3$. 
\begin{figure}[t]
\begin{subfigure}{.51\linewidth}
\centering
\begin{overpic}[scale=.16,percent]{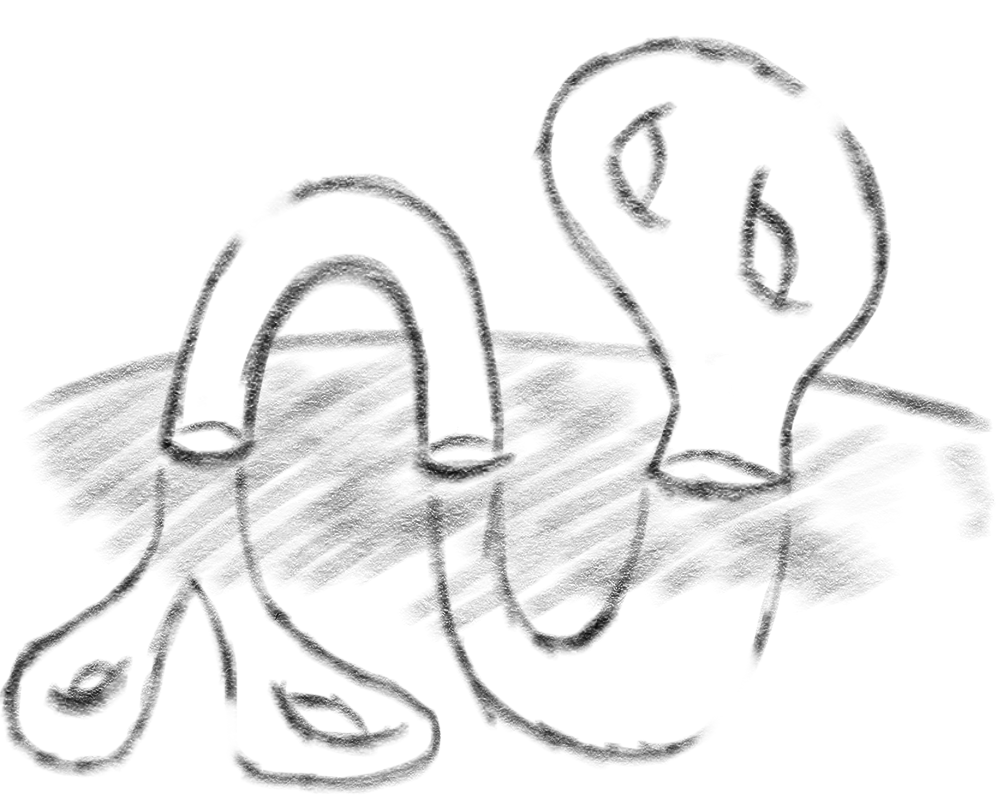}
\put(87,28){$P$}
\put(81,68){$F_1$}
\put(71,35.2){\footnotesize $C_1$}
\put(71,8){$A_{12}$}
\put(42.8,37.3){\footnotesize $C_2$}
\put(22,56.5){$A_{23}$}
\put(18,38.6){\footnotesize $C_3$}
\put(20.5,4.7){$F_3$}
\put(5,70){$X$}
\put(93,4){$Y$}
\end{overpic}
\caption{Cutting $\Compl\HK$ along $P$.}
\label{fig:P_cuts_surface}
\end{subfigure} 
\begin{subfigure}{.45\linewidth}
\centering
\begin{overpic}[scale=.15,percent]{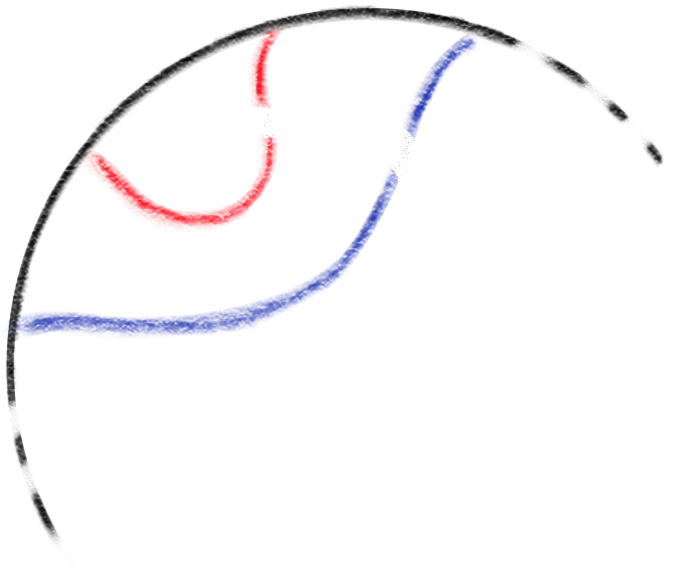}
\put(37,65.8){\footnotesize $\alpha$}
\put(55.7,61){\footnotesize $\beta$} 
\put(38,46){$D$}
\end{overpic}
\caption{$\alpha$, $\beta$ parallel in $Q$.}
\label{fig:alpha_beta_in_Q}
\end{subfigure}
\begin{subfigure}{.45\linewidth}
\centering
\begin{overpic}[scale=.15,percent]{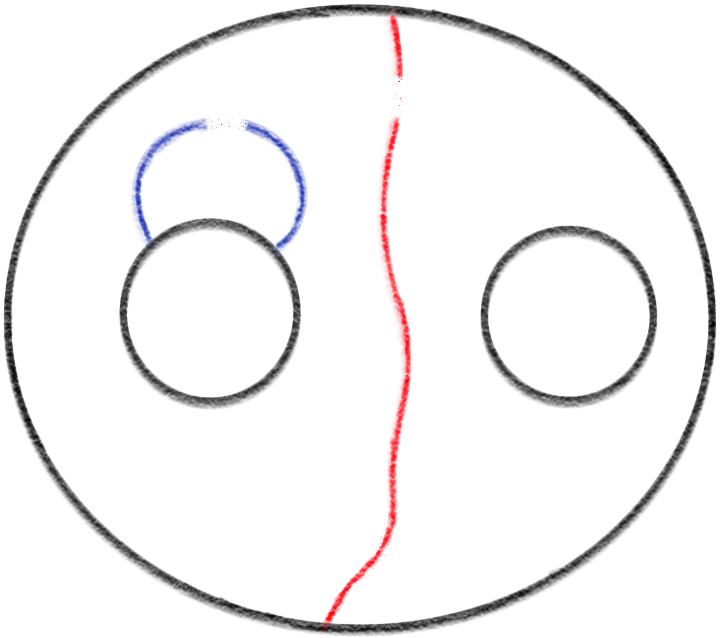}
\put(29,70){\footnotesize $\beta$}
\put(53.5,72.8){\footnotesize $\alpha$}
\put(27,61){\footnotesize $D'$}
\end{overpic}
\caption{$\alpha$, $\beta$, $D'\subset P$.}
\label{fig:alpha_beta_in_P}
\end{subfigure}
\begin{subfigure}{.5\linewidth}
\centering
\begin{overpic}[scale=.19,percent]{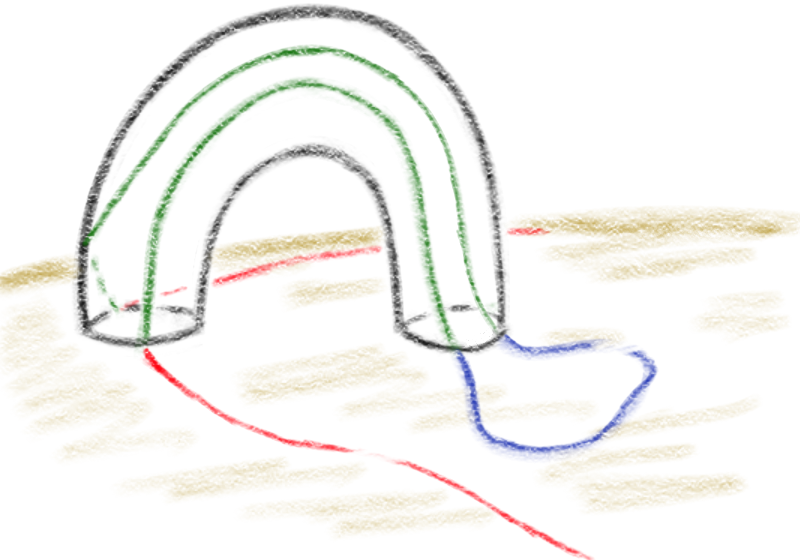}
\put(45,12.7){\small $\alpha$}
\put(75,26){\small $\beta$}
\put(64,18){\small $D'$}
\put(11.1,37.5){\small $D''$}
\put(10,10){\small $A$}
\end{overpic}
\caption{$D',A\subset P$ and $D''\subset A_{21}$ or $A_{23}$.}
\label{fig:Dprimeprime}
\end{subfigure}
\caption{}
\end{figure}

An arc $\alpha$ in $\alphaQ$ 
is called an 
\emph{$(\mathtt{i},\mathtt{j})$-arc} if $\partial\alpha$ 
meets the components $C_i,C_j\subset\partial P$.
An $(\mathtt{i},\mathtt{i})$-arc is abbreviated to an \emph{$\mathtt{i}$-arc} (resp.\ \emph{$\mathtt{i^\ast}$-arc}) if $\alpha$ is essential (resp.\ inessential) in $P$.  
An $(\mathtt{i},\mathtt{j})$-arc 
is always essential in $P$ 
when $i\neq j$.

Two arcs $\alpha_1,\alpha_2$ in $\alphaQ$ are \emph{immediately parallel} if they cut off a disk $D$ from $Q$ with $D\cap\alphaQ=\alpha_1\cup \alpha_2=\fron D$. An arc $\alpha$ 
in $\alphaQ$ is said to be \emph{outermost} if $\alpha$ cuts off from $Q$ a disk disjoint from arcs in $\alphaQ-\{\alpha\}$.
Let $\alpha_1,\alpha_2$ in $\alphaQ$ be two outermost arcs, and denote by $D_1,D_2$ the disks cut off by $\alpha_1,\alpha_2$, respectively. Then $\alpha_1,\alpha_2$ are \emph{boundary-adjacent} if $D_1,D_2$ are disjoint, and $\partial Q-(D_1\cup D_2)$ contains an arc disjoint from arcs in $\alphaQ-\{\alpha_1,\alpha_2\}$.

\begin{lemma}\label{lm:two_incompressible}
If $Q$ is incompressible, then $n_\otimes=0$.
\end{lemma}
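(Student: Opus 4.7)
The plan is the standard innermost-disk argument, resting on three ingredients: $P$ is incompressible (built into the definition of a $\p 3$-surface), $Q$ is incompressible (hypothesis), and the exterior $\Compl\HK$ is irreducible (any $2$-sphere in $\Compl\HK\subset\sphere$ bounds a $3$-ball on the side disjoint from $\HK$ by Alexander's theorem). Suppose for contradiction that $n_\otimes>0$, and pick a circle $c\subset P\cap Q$ which is inessential in $P$ or in $Q$.

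First, I would reduce to the case where some such circle is actually inessential in $Q$. If $c$ bounds a disk in $P$, take such a disk $D'\subset P$ to be innermost in $P$; since $D'\cap\partial P=\emptyset$, innermostness forces $\mathring D'\cap Q=\emptyset$, so $D'$ is a compressing disk for $Q$. The incompressibility of $Q$ then forces $\partial D'$ to bound a disk in $Q$ as well. Hence we may assume some circle $c'\subset P\cap Q$ is inessential in $Q$.

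Now choose an innermost disk $D\subset Q$ with $\mathring D\cap P=\emptyset$ and $\partial D=c'$. Incompressibility of $P$, applied to $\partial D$ (which bounds the disk $D\subset\Compl\HK$), yields a disk $D''\subset P$ with $\partial D''=\partial D$. The $2$-sphere $D\cup D''\subset\Compl\HK$ then bounds a $3$-ball $B$ by irreducibility. Isotoping $P$ across $B$---pushing $D''$ past $D$ and slightly beyond---removes $c'$ from $P\cap Q$ together with every intersection component lying in $\mathring D''$. Since the isotopy is supported in the interior of $\Compl\HK$, it fixes $\partial P$, so $n_\partial$ is unchanged while $n_\otimes$ strictly decreases. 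This contradicts the minimality of $\pind$ in the order $\prec$, and establishes $n_\otimes=0$.

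The only real subtlety is bookkeeping: one must verify that the cut-and-paste isotopy does not create new intersections or raise $n_\partial$. Choosing $D$ innermost in $Q$ guarantees that the entire operation occurs inside $B$, and the fact that $B$ is disjoint from $\partial\Compl\HK$ ensures the boundary intersection pattern is preserved. Everything else is routine.
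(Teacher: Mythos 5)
Your argument is correct and is precisely the standard innermost-disk argument that the paper's one-line proof invokes: incompressibility of $P$ and $Q$ plus irreducibility of $\Compl\HK$, combined with the standing minimality of $\pind$. The reduction to a circle inessential in $Q$, the innermost-disk swap across the ball, and the observation that $n_\partial$ is untouched while $n_\otimes$ drops are exactly what the paper leaves implicit, so there is nothing to add.
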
 
\begin{proof}
By the incompressibility of $P,Q$ and irreducibility of $\Compl\HK$.
\end{proof}

\begin{lemma}\label{lm:arcs}\hfill
\begin{enumerate}[label=\textnormal{(\roman*)}]
\item\label{itm:outermost} Suppose either $g=2$
or $\Compl\HK$ is $\partial$-irreducible. 
If $\alpha\in \alphaQ$ is an outermost arc, then
$\alpha$ is either a $\one$-arc
or a $\three$-arc.
\item\label{itm:one_three_exclusion} $\alphaQ$ cannot contain a $\one$-arc and $\three$-arc at the same time. 
\item\label{itm:no_adjacent_parallel} If $\alpha \in\alphaQ$ is an outermost $\one$-arc (resp.\ $\three$-arc), then $\alpha$ has no immediately parallel
arc and no boundary-adjacent arc. 
\item\label{itm:not_parallel_types} If $\alpha_1,\alpha_2\in \alphaQ$ 
are both $(\one,\two)$-arcs (resp.\ $(\two,\three)$-arcs), then they are not immediately parallel.
\end{enumerate}
\end{lemma}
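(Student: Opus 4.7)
My strategy throughout is to argue by contradiction against minimality of the partial index $(n_\partial,n_\otimes)$, combined with incompressibility of $P$, irreducibility of $\Compl\HK$, and the standing hypotheses (either $\partial$-irreducibility of $\Compl\HK$, or a genus-$2$ substitute via Tsukui's criterion). Part (ii) comes directly from the pair-of-pants topology of $P$: the only isotopy class of essential arcs in $P$ with both endpoints on $C_1$ separates $C_2$ from $C_3$, cutting $P$ into two annuli, one containing $C_2$ and one containing $C_3$. Any disjoint essential arc with both endpoints on $C_3$ must lie in the annulus containing $C_3$; but in an annulus, every arc with both endpoints on one boundary component is $\partial$-parallel, so such an arc would be inessential in $P$, contradicting the essentiality of a $\three$-arc.

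For (i), let $\alpha\in\alphaQ$ be outermost, cutting off $D\subset Q$ with $\partial D=\alpha\cup\beta$ and $\beta$ in one of the components $F_1,A_{12},A_{23},F_3$ of $\partial\HK-\partial P$. The $(\one,\three)$-type is immediately excluded, since no such component touches both $C_1$ and $C_3$. For an inessential outermost arc (type $\one^*$, $\two^*$ or $\three^*$), $\alpha$ cuts off a disk $D'\subset P$, and after passing to an innermost configuration the disk $D\cup D'\subset\Compl\HK$ has boundary $\beta\cup\gamma\subset\partial\HK$; the hypothesis forces this curve to bound a disk in $\partial\HK$, yielding an isotopy of $Q$ that reduces $n_\partial$ and contradicts minimality. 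For a $\two$-arc, $\partial$-compressing $P$ along $D$ produces two annuli; the component containing $C_1$ has its second boundary $\gamma_1$ lying in, say, $A_{12}$, and the dichotomy —$\gamma_1$ either bounds a disk in $A_{12}$ or is isotopic to its core— leads to either an essential compressing disk with boundary the meridian $C_1$ (contradicting $\partial$-irreducibility), or to an essential annulus in $\Compl\HK$ whose interaction with the $\p 3$-surface $P$ is incompatible with the $g=2$ topology. The $(\one,\two)$- and $(\two,\three)$-arcs are handled by the same $\partial$-compression strategy.

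For (iii) and (iv), assume immediately parallel arcs $\alpha,\alpha'\in\alphaQ$ cobound a disk $D_Q\subset Q$ with $D_Q\cap\alphaQ=\alpha\cup\alpha'$. By parts (i), (ii) and the pair-of-pants topology, both arcs share the same type and have endpoints on the same boundary circles, so they are parallel in $P$ and cobound a rectangle. An innermost-arc reduction inside this rectangle —with inner arcs constrained by parts (i), (ii) to cases already handled— yields a sub-rectangle $R_m\subset P$ with $R_m\cap\alphaP=\alpha\cup\alpha'$. The sphere $\Sigma:=D_Q\cup R_m$ is embedded in $\Compl\HK$; by irreducibility $\Sigma$ bounds a ball, which produces an isotopy of $Q$ across $P$ removing both $\alpha$ and $\alpha'$ from $\alphaQ$, lowering $n_\partial$ by $2$ without increasing $n_\otimes$ and contradicting minimality. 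The boundary-adjacent case in (iii) is analogous: the two outermost disks $D_1,D_2$ joined by an arc on $\partial Q-(D_1\cup D_2)$ together with a subdisk of $\partial\HK-\partial P$ supply the reducing sphere.

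The main obstacle is part (i), specifically the $\two$-arc subcase and its $(\one,\two)$ and $(\two,\three)$ variants: the new boundary curves arising from $\partial$-compression of $P$ live inside the annuli $A_{12}$ or $A_{23}$, and the required contradiction must be extracted from the "bounds-a-disk vs.\ isotopic-to-core" dichotomy using either $\partial$-irreducibility (when $g>2$) or the $g=2$ genus constraint. The innermost-arc reductions in (iii) and (iv) also depend on (i) and (ii) being in place first, which dictates a natural order for the proof.
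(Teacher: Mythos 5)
There are genuine gaps, concentrated exactly where you flag uncertainty. The central missing idea in part (i) is the paper's one-point-intersection trick: if $\alpha$ is an outermost $(\one,\two)$- or $(\two,\three)$-arc, the outermost disk $D\subset Q$ meets the circle $C_2$ in exactly one point (the single endpoint of $\alpha$ on $C_2$, since the rest of $\partial D$ lies on $\partial Q$ and carries no further intersections with $\partial P$), so the frontier of a regular neighborhood of $D\cup C_2$ in $Y$ (resp.\ $X$) is a compressing disk of $P$ --- an immediate contradiction with incompressibility. This disposes of the mixed-label cases in (i) and also of the boundary-adjacent case in (iii); your proposed substitute (a $\partial$-compression of $P$ followed by a ``bounds-a-disk vs.\ isotopic-to-core'' dichotomy) is precisely the step you leave unresolved, and it does not obviously close. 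The $\two$-arc case has a far more elementary resolution than the one you sketch: $D\cap A$ is an inessential arc of $A_{21}$ or $A_{23}$, and isotoping $Q$ across the disk it cuts off trades the arc $\alpha$ for a circle, strictly decreasing $n_\partial$ and hence the lexicographic partial index. Finally, in the $\one^\ast$/$\three^\ast$ case your appeal to Tsukui's criterion as a ``genus-$2$ substitute'' is circular: the $\partial$-irreducibility of genus-two exteriors here is Corollary \ref{cor:genus_two_b_irre}, whose proof depends on this lemma. The paper instead uses that $V_1$ (resp.\ $V_3$) is a solid torus, so the essential disk $D''$ meets a meridian disk of $V_1$ once, and again the regular-neighborhood frontier yields a compressing disk of $P$.

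Parts (iii) and (iv) as you propose them fail at a topological step: two disks $D_Q\subset Q$ and $R_m\subset P$ glued along the two arcs $\alpha\cup\alpha'$ form an annulus with boundary on $\partial\HK$, not a sphere, so irreducibility of $\Compl\HK$ gives you nothing. Moreover, your premise that immediately parallel arcs ``share the same type'' is false in case (iii): the label pattern $1,2,3,3,2,1$ around $\partial Q$ forces the arc immediately parallel to an outermost $\one$-arc to be a $\two^\ast$-arc, and the paper's argument is a careful isotopy along the disks $D'$ and $D''$ producing a disk bounded by a loop parallel to $C_2$ or $C_3$ (resp.\ $C_1$), contradicting incompressibility of $P$. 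In (iv), when the bigon $D$ lies on the side where both of its boundary arcs on $\partial\HK$ are essential in $A_{21}$ (resp.\ $A_{23}$), the contradiction comes from the fact that no knot exterior admits a disk with non-integral boundary slope; this input is entirely absent from your sketch, and no ball-isotopy argument can replace it.
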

\begin{proof}
\ref{itm:outermost} Note first by connectivity, $\alpha$ cannot be an $(\one,\three)$-arc.
Let $D\subset Q$ be the disk cut off by $\alpha$. If $\alpha$ is a $(\one,\two)$-arc (resp.\ $(\two,\three)$-arc), then $\alpha$
meets $C_2$ at one point. Thus the frontier of a regular neighborhood of $D\cup C_2$
in $Y$ (resp.\ $X$) 
induces a compressing disk of $P$,
contradicting the incompressibility of $P$. Let $A$ be $A_{21}$ or $A_{23}$. 
If $\alpha$ is a $\two$-arc, then 
$D\cap A\subset A$ is an inessential arc and cuts off from $A$ a disk $D'$. Isotoping $Q$ along $D'$ replaces the arc $\alpha$ with a circle, contradicting the minimality of $\pind$.  

Suppose $\alpha$ is a $\one^*$-arc (resp.\ $\three^\ast$-arc). Then $\alpha$ cuts off a disk $D'$ from $P$. By the minimality, the arc $D\cap F_1$ (resp.\ $D\cap F_3$) is essential 
in $F_1$ (resp.\ $F_3$), and hence the union $D\cup D'$ induces an essential disk $D''$ in $X$ (resp.\ $Y$). 
This cannot happen 
if $\Compl\HK$ is $\partial$-irreducible. If $g=2$, then $V_1$ (resp.\ $V_3$) is
a solid torus, and hence $D''$ meets an essential disk $D_v\subset V_1$ disjoint from $C_1$ (resp.\ $D_v\subset V_3$ disjoint from $C_3$) at one point. The frontier of a regular neighborhood of $D''\cup \partial D_v$ then induces a compressing disk of $P$, a contradiction.

\ref{itm:one_three_exclusion} If $\alphaQ$ contains a $\one$-arc (resp.\ $\three$-arc) $\alpha$, then $\alpha$, being essential in $P$, separates $C_2$ and $C_3$ (resp.\ $C_2$ and $C_1$). Hence, every arc $\beta\subset \alphaP$ with boundary 
in $C_3$ (resp.\ $C_1$) is inessential. 

\ref{itm:no_adjacent_parallel} 
Suppose $\beta$ is an outermost arc boundary-adjacent to 
$\alpha$, and $D\subset Q$ the disk cut off by $\beta$ with $D\cap\alpha=\emptyset$. 
Since $\beta$ is a $(\two,\three)$-arc (resp.\ $(\one,\two)$-arc), $D\cap C_2$ 
is a point and thus the frontier of a regular neighborhood of $D\cup C_2$ in $X$ (resp.\ $Y$) induces a compressing disk of $P$, a contradiction.

Suppose $\beta$ is immediately parallel to $\alpha$. 
Then since $C_1,C_2,C_3$ are parallel and 
$\alpha$ is outermost in $Q$ and essential in $P$, $\beta$ is a $\two^\ast$-arc. Let $D\subset Q$ be the disk cobound by $\alpha$ and $\beta$ (see Fig.\ \ref{fig:alpha_beta_in_Q}), and 
$D'$ be the disk cut off by $\beta$ from $P$ (see Fig.\ \ref{fig:alpha_beta_in_P}). Observe that the closure of $\partial D-\alpha\cup\beta\subset Q$ are two parallel arcs in $A_{21}$ (resp.\ $A_{23}$), which cut $A_{12}$ (resp.\ $A_{23}$) 
into two disks, one of which, 
denoted by $D''$, meets $D'$ in an arc (see Fig.\ \ref{fig:Dprimeprime}).
Since $\alpha$ is essential, $\alpha$ 
cuts $P$ into two annuli, one of which, denoted by $A$, meets $D''$. 
If $A$ contains $D'$, then isotoping $D$ along $D'\cup D''$ induces a disk bounded by a loop parallel to $C_2$, an impossibility. 
If $A$ does not contain $D'$, 
then isotoping $D$ along $D'\cup D''$ induces a disk bounded by a loop parallel to $C_3$ (resp.\ $C_1$),  
contradicting the incompressibility of $P$.  

\ref{itm:not_parallel_types} Let $D\subset Q$ be the disk cut off by 
$\alpha_1\cup\alpha_2$ disjoint from 
other components in $\alphaQ$. 
If $D$ is in $X$ (resp.\ $Y$), 
then the arc $D\cap A_{21}$ (resp.\ $D\cap A_{23}$) is inessential. Isotope through the disk in $A_{21}$ (resp.\ $A_{23}$) cut off by the arc reduces  $\vert \partial P\cap \partial Q\vert$, a contradiction. If $D$ is in $Y$ (resp.\ in $X$), then, by the minimality, $D\cap A_{21}$ (resp.\ $A_{23}$) consists of two parallel essential arcs, contradicting 
the fact that no knot exterior admits a disk with non-integral 
boundary slope.
\end{proof}

\begin{lemma}\label{lm:two_p3_surfaces}
Suppose either $g=2$ or $\Compl\HK$ is $\partial$-irreducible. If $Q$ is a $\p 3$-surface, then $\pind=(0,0)$.
\end{lemma}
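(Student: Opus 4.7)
The plan is to argue by contradiction. Assume $\pind\succ(0,0)$; Lemma~\ref{lm:two_incompressible} already gives $n_\otimes=0$, so we must have $n_\partial>0$.

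First, I would take an outermost arc $\alpha\in\alphaQ$ with outermost disk $D\subset Q$. The condition $n_\otimes=0$ excludes any circle of $P\cap Q$ from sitting inside $D$ (such a circle would be inessential in $Q$), and by outermostness no other arc of $\alphaQ$ sits in $D$; hence $D\cap P=\alpha$. By Lemma~\ref{lm:arcs}\ref{itm:outermost}--\ref{itm:one_three_exclusion} I may assume $\alpha$ is a $\one$-arc. Writing $\beta:=\partial D\setminus\alpha\subset\partial\HK$ and noting $\partial\beta\subset C_1$, the arc $\beta$ lies in $A_{21}$ if $D\subset Y$ and in $F_1$ if $D\subset X$, since these are the components of $\partial X\cap\partial\HK$ and $\partial Y\cap\partial\HK$ whose boundaries meet $C_1$.

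The key case is $D\subset Y$. Then $\beta\subset A_{21}$ is automatically $\partial$-parallel in the annulus $A_{21}$, so it cuts off a subdisk $E\subset A_{21}$ bounded by $\beta$ and a subarc $c_1'\subset C_1$. The union $\Delta:=D\cup_\beta E\subset\overline Y$ is a disk bounded by $\alpha\cup c_1'$, and I would use it to guide an ambient isotopy of $Q$: push the collar of $\beta$ in $\partial Q$ across $E$ onto $c_1'$ while sweeping $D$ simultaneously through $\Delta$, then perturb $\partial Q$ slightly off $\partial P$. The effect is to remove the arc $\alpha$ (and its two endpoints on $C_1$) from $P\cap Q$ while, if the isotopy is arranged properly, introducing no new arcs or inessential circles, producing a $\p 3$-surface $Q'\simeq Q$ with $\pind(P,Q')\prec\pind(P,Q)$ --- contradicting the minimality of $\pind$.

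The case $D\subset X$ with $\beta\subset F_1$ splits. If $\beta$ is $\partial$-parallel in $F_1$, the previous argument carries over using the corresponding subdisk of $F_1$. If $\beta$ is essential in $F_1$, I would invoke the standing hypothesis: under $\partial$-irreducibility of $\Compl\HK$, capping $D$ off along $\partial\HK$ after the essential $\beta$ ultimately produces an essential disk in $\Compl\HK$, a contradiction; under $g=2$, the component $V_1\subset\HK$ is a solid torus, and, as in the treatment of the $\one^\ast$-arc case in the proof of Lemma~\ref{lm:arcs}\ref{itm:outermost}, pairing $D$ with a meridian disk of $V_1$ meeting $\beta$ transversely in one point yields a compressing disk of $P$, contradicting the incompressibility of $P$.

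The main obstacle will be the clean execution of the isotopy in the $D\subset Y$ case. The interior of $E$ can be crossed by other components of $\partial Q$ sitting on $A_{21}$, and pushing $\partial Q$ off $\partial P$ could create new intersections. The cleanest route is to pre-select $\alpha$ so that the corresponding disk $E$ is innermost among subdisks of $A_{21}$ bounded by an arc of $\partial Q\cap A_{21}$ and a subarc of $C_1$; once so arranged, the interior of $E$ is disjoint from $\partial Q$, and the isotopy removes precisely the two endpoints of $\alpha$ from $\partial P\cap\partial Q$ without adding any new intersection points, delivering the sought contradiction.
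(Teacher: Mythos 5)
Your strategy---take one outermost arc of $\alphaQ$ and derive a local contradiction---cannot close, and the place it breaks is the case you dispatch most quickly: $D\subset X$ with $\beta\subset F_1$ essential in $F_1$. Since $\alpha$ is a $\one$-arc it is \emph{essential} in $P$, so it cuts off no disk from $P$ and there is nothing to cap $D$ off with; ``capping $D$ off along $\partial\HK$'' does not produce a disk with boundary on $\partial\HK$. (The capping argument in the paper's proof of Lemma~\ref{lm:arcs}\ref{itm:outermost} is for $\one^\ast$-arcs, which are inessential in $P$ and bound a disk $D'\subset P$; you are importing it into the essential case, where it has no analogue.) The $g=2$ variant fails for the same reason: tubing $D$ along a meridian of $V_1$ yields another disk whose boundary still contains the arc $\alpha\subset P$, not a compressing disk of $P$, and a meridian meeting $\beta$ in exactly one point need not exist. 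This is not a repairable local gap: an outermost $\one$-arc with $D\subset X$ and $\beta$ essential in $F_1$ is precisely the configuration that Lemma~\ref{lm:arcs}\ref{itm:outermost} \emph{permits}, and it genuinely occurs for other incompressible $Q$ (cf.\ the proof of Lemma~\ref{lm:annuli}, where such an $\alpha_0$ is not excluded but analyzed through its neighbouring arcs). Ruling it out when $Q$ is a $\p 3$-surface requires using the global structure of $Q$ itself, which is what the paper does: each component of $\partial Q$ meets $\partial P$ in $n=12k$ points following the pattern $1,2,3,3,2,1,\dots$; a counting argument excludes separating essential arcs in $\alphaP$ and $\alphaQ$ (hence all $\one$- and $\three$-arcs); and the surviving labelling patterns are killed by Lemma~\ref{lm:arcs}\ref{itm:not_parallel_types}. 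None of this appears in your proposal.

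Two further points. In your ``key case'' $D\subset Y$ the isotopy is both unnecessary and not cleanly executable as described. Unnecessary, because $\Delta=D\cup E$ has boundary $\alpha\cup c_1'$, which is a closed curve on $P$ parallel in $P$ to $C_2$ or $C_3$ ($\alpha$ being essential, it splits $P$ into two annuli); pushing $\Delta$ off $\partial\HK$ therefore gives a compressing disk of $P$ with essential boundary, an immediate contradiction with the incompressibility of $P$ --- the same remark disposes of $D\subset X$ with $\beta$ inessential in $F_1$. Not cleanly executable, because the innermost subdisk of $A_{21}$ cut off by an arc of $\partial Q\cap A_{21}$ need not be bounded by the $\beta$ of an \emph{outermost} arc of $\alphaQ$; the two extremality conditions live on different surfaces and cannot in general be arranged simultaneously, so the ``pre-selection'' you propose does not exist.
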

\begin{proof}  
Denote by $C_1',C_2',C_3'$ components of 
$\partial Q$, and observe that, since components of $\partial P$ are parallel, by the minimality of $n_\partial$, every component of $\partial Q$ meets $\partial P$ at the same number $n$ of points. 
Since every $C_i$ (resp.\ $C_i'$) separates $\partial\HK$, $n=12k$ with $k$ a non-negative integer. Also, we have $3n=2n_\partial$. Since $C_2$ is in between $C_1,C_3$ in $\partial\HK$,
the labels for points in $C_i'\cap\partial P$ follows the pattern around $C_i'$:  
\begin{equation}\label{eq:pattern}
1,2,3,3,2,1,\dots,1,2,3,3,2,1.
\end{equation}

{\bf Claim: No separating essential arc in $\alphaQ$ and $\alphaP$.}
Consider first $\alphaQ$ and suppose otherwise. Then it may be assumed there exists an arc $\alpha$ separating 
$Q$ into two annuli with $C_2',C_3'$ in different annuli. In particular, 
every essential arc meeting $C_i'$, $i=2,3$,
is non-separating and meets $C_1'$.
Let $n_i$ be the number of essential arcs meeting $C_i'$, $i=1,2,3$.
By Lemma \ref{lm:arcs}\ref{itm:outermost}\ref{itm:one_three_exclusion}, it may be assumed outermost inessential arcs in $\alphaQ$ are all $\one$-arcs. Since $n_\otimes=0$ by Lemma \ref{lm:two_incompressible}, 
all inessential arcs in $\alphaQ$ are outermost 
by Lemma \ref{lm:arcs}\ref{itm:no_adjacent_parallel}.
In addition, since every $(\mathtt{j},\mathtt{k})$-arc in $\alphaQ$ with $\mathtt{j}=2$ or $3$ is essential, we have $n_i\geq \frac{2}{3}n$, $i=2,3$, and hence $n_1=n_2+n_3>n$, a contradiction. 
Swapping the roles of $P$ and $Q$ proves the claim for $\alphaP$.  

The fact that no separating essential arc exists in $\alphaP$ implies no $\one$-arc of $\three$-arc in 
$\alphaQ$. Thus, all arcs in $\alphaQ$ are essential and non-separating.

Suppose now $n\neq 0$, and hence $n=12k\geq 12$. Then there are at least $6$ essential arcs $\alpha_i$, $i=1,\dots, 6$, connecting $C_1',C_2'$. 
Let $a_i,b_i$ be the boundary of $\alpha_i$ in $C_1',C_2'$, respectively.
Because one period of the pattern \eqref{eq:pattern} is $(1,2,3,3,2,1)$, 
it may be assumed that $(a_1,\dots,a_6)$ 
are labeled with $(1,2,3,3,2,1)$; see Fig.\ \ref{fig:pattern}. 
There are six possible labeling patterns on $(b_1,\dots,b_6)$:
\begin{multline}\label{eq:possible_patterns}
(1,2,3,3,2,1),\hspace*{.2em}
(2,3,3,2,1,1),\hspace*{.2em} 
(3,3,2,1,1,2),\hspace*{.2em}\\
(3,2,1,1,2,3),\hspace*{.2em}
(2,1,1,2,3,3),\hspace*{.2em}
(1,1,2,3,3,2).  
\end{multline}
The first, second, fourth and sixth cases in \eqref{eq:possible_patterns} imply $\alphaQ$ contains an $i$-arc or $i^\ast$-arc, $i=1,2$ or $3$, contradicting $\alphaP$ containing no separating arcs. The third and fifth cases both imply the existence of a pair of successive $(\one,\two)$-arcs 
and a pair of successive $(\two,\three)$-arcs, at least one of which is immediately parallel, contradicting Lemma \ref{lm:arcs}\ref{itm:not_parallel_types}; see Fig.\ \ref{fig:pattern}.  
\end{proof}

\begin{figure}[t]
\begin{subfigure}[b]{.51\linewidth}
\centering
\begin{overpic}[scale=.18,percent]{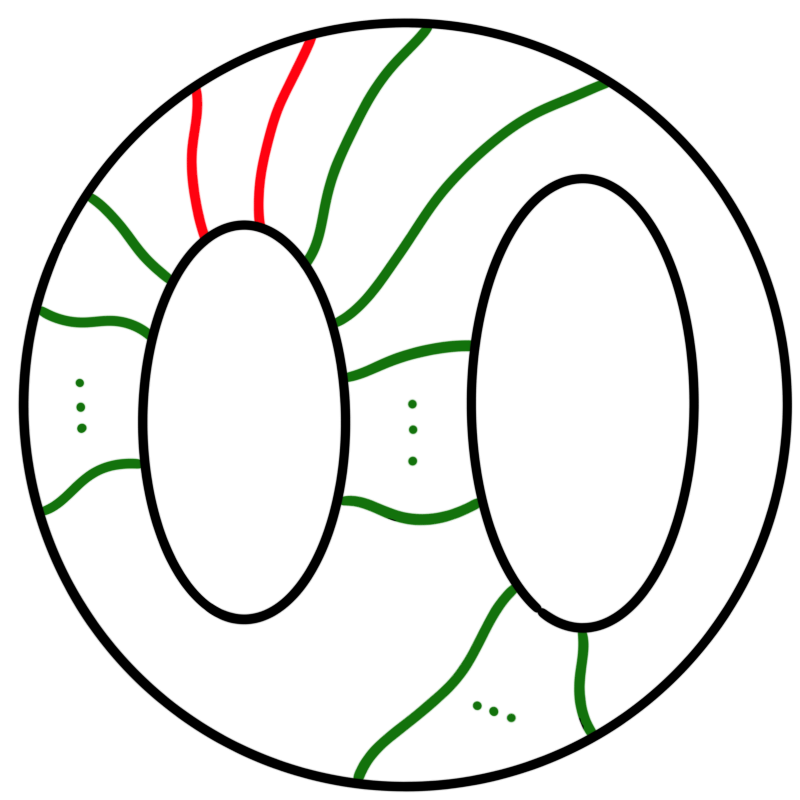}
\put(.5,61){\small $3$}
\put(7,76){\small $3$}
\put(21.5,90.5){\small $2$}
\put(36,96.5){\small $1$}
\put(52.5,98){\small $1$}
\put(76,91){\small $2$}
\put(20,55){\footnotesize $2$}
\put(22.1,62){\footnotesize $1$}
\put(25,65){\footnotesize $1$}
\put(31,66.2){\footnotesize $2$}
\put(35.5,62.4){\footnotesize $3$}
\put(38,56){\footnotesize $3$}
\put(8,55.5){\footnotesize $\alpha_3$}
\put(11,66){\footnotesize $\alpha_4$}
\put(17,75){\footnotesize $\alpha_5$}
\put(34.2,81){\footnotesize $\alpha_6$}
\put(44.6,82){\footnotesize $\alpha_1$}
\put(60,78){\footnotesize $\alpha_2$}
\put(89,79){$C_1'$}
\put(25.3,25.5){$C_2'$}
\end{overpic}
\caption{Labeling pattern.}
\label{fig:pattern}
\end{subfigure} 
\begin{subfigure}[b]{.47\linewidth}
\centering
\begin{overpic}[scale=.15,percent]{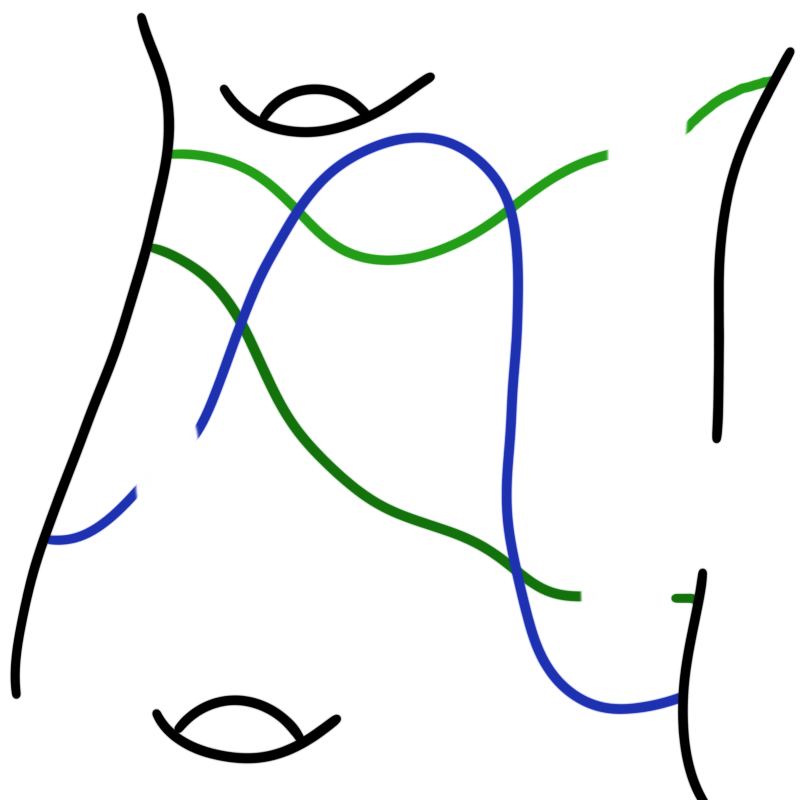}
\put(17,39.5){\small $\partial P$}
\put(72.4,22.3){\small $\partial Q$}
\put(74.2,81.6){\small $\partial Q'$}
\put(40,34.7){\footnotesize $\gamma$}
\put(65,45){\footnotesize $\beta$}
\put(50,62.5){\footnotesize $\gamma'$}
\put(50,50){\small $D$}
\put(47,73){\small $D'$}
\put(82,34){\small $\partial\HK$}
\end{overpic}
\caption{Bigon criterion.}
\label{fig:bigon}
\end{subfigure}
\caption{}
\end{figure}

\subsection{Uniqueness of decomposition}
Let $\systemP$ be a $\p 3$-system of $\pair$ and $\uniP$ the union of members in $\systemP$.

\begin{lemma}\label{lm:mini_index}
Let $\uniQ$ be an incompressible surface. If
$\uniP,\uniQ$ minimize the total index $(\bn_\partial,\bn_\circ)$ in their isotopy classes, then any two components $P\subset \uniP, Q\subset\uniQ$ minimize the partial index $\pind$ in their isotopy classes.   
\end{lemma}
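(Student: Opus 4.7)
The plan is to argue by contradiction. Suppose $\uniP$ and $\uniQ$ realize the minimum total index $\bdtind$ in their isotopy classes, yet for some components $P\subset\uniP$ and $Q\subset\uniQ$ the partial index $\pind$ exceeds its minimum value over the isotopy classes of $P, Q$ in $\Compl\HK$. My goal is to extract from this an ambient isotopy of $\uniP$ (or $\uniQ$) that strictly decreases the total index, thereby contradicting its minimality.

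Since $\pind$ is non-minimal, either (a) $n_\partial(P,Q)$ can be reduced by an isotopy of $P$ and $Q$, or (b) $n_\partial$ is minimal but $n_\otimes(P,Q)>0$ can be decreased. In case (a), the bigon criterion for incompressible surfaces in the irreducible manifold $\Compl\HK$ yields a bigon $D$, that is, a disk with $\partial D=\alpha\cup\beta$, $\alpha\subset P$ and $\beta\subset Q$ arcs, and $\mathring D\cap(P\cup Q)=\emptyset$. Among all bigons between $P$ and $Q$, I would select one minimizing $\vert D\cap(\uniP\cup\uniQ)\vert$ and run a standard innermost-circle / outermost-arc analysis inside $D$ to show this minimum is zero: an innermost circle in $D\cap(\uniP\cup\uniQ)$ bounds a subdisk of $D$ which, by the component-wise incompressibility of $\uniP\cup\uniQ$ and the irreducibility of $\Compl\HK$, can be removed by an ambient isotopy; an outermost arc in $D\cap(\uniP\cup\uniQ)$ would cut off a strictly smaller bigon, violating the choice of $D$. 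Once $\mathring D\cap(\uniP\cup\uniQ)=\emptyset$, the bigon move along $D$ is supported in a neighborhood of $D$ and decreases $\bd n_\partial$, a contradiction.

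In case (b), some circle $c\in P\cap Q$ is inessential in $P$ (the other case is symmetric) and bounds a disk $D\subset P$. An innermost-circle reduction in $D$ against $\uniQ$---using at each step the incompressibility of the affected component---achieves $\mathring D\cap\uniQ=\emptyset$; the incompressibility of $Q$ then produces a disk $D'\subset Q$ with $\partial D'=c$, and the symmetric pruning yields $\mathring{D'}\cap\uniP=\emptyset$. The $2$-sphere $D\cup D'$ bounds a ball $B$ by irreducibility. Since each component of $\uniP\cup\uniQ$ is a properly embedded surface whose boundary lies on $\partial\HK$, and $\partial\HK$ is disjoint from $\mathring B$, no such component can be entirely contained in $B$; a further innermost-ball reduction, peeling off intersections of $\uniP\cup\uniQ$ with $\partial B$ via the same innermost-disk moves, forces $\mathring B\cap(\uniP\cup\uniQ)=\emptyset$. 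Swapping $D$ across $B$ to $D'$ then eliminates $c$ from $\uniP\cap\uniQ$, strictly decreasing $\bd n_\circ$ without altering $\bd n_\partial$, again a contradiction.

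The main obstacle I anticipate is the pruning step in each case: reducing the initial bigon (resp.\ ball) so that its interior meets none of $\uniP\cup\uniQ$, and not merely none of $P\cup Q$. This is handled by induction on $\vert D\cap(\uniP\cup\uniQ)\vert$ (resp.\ $\vert\mathring B\cap(\uniP\cup\uniQ)\vert$), each step consuming an innermost circle via incompressibility plus irreducibility, or an outermost arc via the extremality of the original bigon. The bulk of the technical work lies precisely in carrying out this induction cleanly and tracking that no ``hidden'' intersections with components of $\uniP\cup\uniQ$ other than $P$ and $Q$ are created; once that is in place, the contradictions above close the argument.
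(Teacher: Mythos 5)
Your case (b) (reducing an inessential intersection circle) follows the paper's argument essentially verbatim and is fine, modulo the usual glossing that both you and the paper apply to the innermost--ball bookkeeping. The genuine gap is in case (a). You invoke a ``bigon criterion for incompressible surfaces in the irreducible manifold $\Compl\HK$'' to produce a disk $D$ in the $3$--manifold with $\partial D=\alpha\cup\beta$, $\alpha\subset P$, $\beta\subset Q$, and $\mathring{D}\cap(P\cup Q)=\emptyset$. No such criterion is available here: the product-region/minimal-position theorems for surfaces in $3$--manifolds that come closest require both surfaces to be essential (in particular $\partial$-incompressible) and the ambient manifold to be $\partial$-irreducible, and neither hypothesis is assumed in this lemma ($\uniQ$ is only incompressible, and $\partial$-irreducibility of $\Compl\HK$ is imposed only in later lemmas). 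Moreover the object itself is ill-suited to the goal: the corners $\partial\alpha=\partial\beta$ must lie on $P\cap Q$, which is a $1$--manifold rather than a finite set of points, and pushing $Q$ across such a disk is not a move that visibly decreases the number of \emph{arc components} $n_\partial$ of $P\cap Q$. The paper's route is two-dimensional and elementary: since $n_\partial=\tfrac{1}{2}\vert\partial P\cap\partial Q\vert$ and a proper isotopy of $P$ restricts to an isotopy of $\partial P$ in $\partial\HK$, non-minimality of $n_\partial$ forces the multicurves $\partial P,\partial Q$ out of minimal position on the closed surface $\partial\HK$; the Farb--Margalit bigon criterion then yields a bigon $D\subset\partial\HK$ bounded by arcs of $\partial P$ and $\partial Q$, and pushing across it (in a collar of $\partial\HK$) removes two points of $\partial\uniP\cap\partial\uniQ$, hence one arc, lowering $\bn_\partial$.

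A secondary flaw sits in your pruning step: you dismiss an outermost arc of $(\uniP\cup\uniQ)\cap D$ on the grounds that it ``would cut off a strictly smaller bigon, violating the choice of $D$,'' but that smaller bigon is a bigon between $P$ and some \emph{other} component of $\uniQ$ (or between $Q$ and another component of $\uniP$), so it does not contradict a minimality taken only over bigons between the fixed pair $P,Q$. The paper sidesteps this by choosing the outermost arc of $\partial\uniQ\cap D$ whose cut-off disk avoids the $\partial Q$-side of $\partial D$ entirely (so it automatically misses all of $\partial\uniP$ except a subarc of $\partial P$) and isotoping the corresponding component of $\uniQ$ across that subdisk, which already reduces the global count $\bn_\partial$ and yields the contradiction directly. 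You would need an analogous fix even if the $3$--dimensional bigon existed.
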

\begin{proof}
Suppose $\pind$ is not minimized, and there are 
$\tilde P, \tilde Q$ isotopic to $P,Q$ respectively, 
with $\pindtilde\prec \pind$. 
In particular, either $\tilde n_\partial <n_\partial$ or 
$\tilde n_\partial=n_\partial$ and $\tilde n_\otimes <n_\otimes$.
In the former, 
by \cite[Proposition $1.7$]{FarMar:11}, $\partial P,\partial Q$ cut off a disk $D$ from $\partial\HK$ with $\partial D=\beta\cup\gamma$ and $\beta,\gamma$ two arcs in $\partial P, \partial Q$, respectively. If  
$(\uniQ-Q)\cap D=\emptyset$, then one can isotope $Q$ via $D$ so that members in $\systemQ$ remain mutually disjoint and 
$\partial \gamma$ is removed from $\partial\uniP\cap\partial\uniQ$ with no new intersection introduced, contradicting the minimality of
$(\bn_\partial,\bn_\circ)$. 
If $(\uniQ-Q)\cap D\neq \emptyset$, 
then there exists an outermost arc $\gamma'\subset \uniQ\cap D$ in $D$ that cuts off a disk $D'\subset D$ disjoint from $\gamma$; see Fig.\ \ref{fig:bigon}. Let $Q'\subset \uniQ$ be the component containing $\gamma'$. Then one can isotope $Q'$ across $D'$ so that the intersection $\partial\gamma'\subset \partial\uniP\cap\partial\uniQ$ is removed while members in $\systemQ$ remains mutually disjoint and no new intersection is created, 
contradicting $(\bn_\partial,\bn_\circ)$ is minimized.

Now, if $\tilde n_\partial =n_\partial$, then $0\leq\tilde n_\otimes <n_\otimes$, and hence there exists an innermost inessential circle $\alpha\in\alphaP$; $\alpha$ is inessential in $Q$ by incompressibility of $\uniQ$. Let $D_p,D_q$ be the disks cut off by 
$\alpha$ from $P,Q$, respectively. 
Then the union $D_p\cup D_q$ bounds a $3$-ball $B$ in $\Compl\HK$. If $(\uniQ-Q)\cap D_p=\emptyset$, then one can isotope 
$Q$ across 
$B$ so that the intersection $\alpha$ is removed and 
$\systemQ$ remains a set of disjoint surfaces with no new intersection introduced,
contradicting the minimality 
of  
$(\bn_\partial,\bn_\circ)$.
If $(\uniQ-Q)\cap D_p\neq\emptyset$, 
then there is an innermost circle $\beta\subset (\uniQ-Q)\cap D_p$ in 
$D_p$ that cuts off a disk $D_p'$ from $D_p$. Let $Q'$ be the component containing $\beta$, and $D_q'\subset Q'$ the disk cut off by $\beta$.  
Then 
$D_p'\cup D_q'$ bounds a $3$-ball $B$. Since $\mathring{D}_p'\cap \uniQ=\emptyset$, one can isotope $Q'$ across $B'$ so that the intersection $\beta$ is removed without creating new intersection or altering disjointness of $\systemQ$, contradicting 
the minimality of $(\bn_\partial,\bn_\circ)$.
\end{proof}

Let $\systemQ$ be another $\p 3$-system 
of $\pair$, and $\uniQ$ the union of its members.
\begin{lemma}\label{lm:two_systems}
Suppose $\pair$ is $2$-indecomposable and 
either $g=2$ or  
$\Compl\HK$ is $\partial$-irreducible. 
Then there is an ambient isotopy $f_t:\pair\rightarrow \pair$ such that
$f_1(\uniP)$ is disjoint from 
$\uniQ$.  
\cout{
Suppose $\pair$ is $2$-indecomposable and 
either $g=2$ or $g>2$ and $\Compl\HK$ is $\partial$-irreducible. Let $\systemP=\{P_1,\cdots,P_k\},\systemQ=\{Q_1,\cdots, Q_l\}$ be two $\p 3$-systems; denote by $\uniP,\uniQ$ the unions $\cup_{i=1}^k\systemP,  
\cup_{i=1}^l\systemQ$, respectively. 
Then there exists an ambient isotopy $f_t:\pair\rightarrow \pair$ such that
$f_1(\mathcal{P})$ is disjoint from 
$\mathcal{Q}$.
}
\end{lemma}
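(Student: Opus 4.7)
The plan is to use an extremal argument: isotope $\uniP$ to minimize the total index with $\uniQ$, then show the minimum forces disjointness.

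After an initial ambient isotopy, we may assume $\uniP,\uniQ$ are in general position and realize the minimum total index $(\bd n_\partial,\bd n_\circ)$ in their isotopy classes. Since each component of $\uniQ$ is incompressible, Lemma \ref{lm:mini_index} ensures every pair $(P,Q)\in\systemP\times\systemQ$ also realizes its minimum partial index $\pind$. Under the standing hypotheses, Lemma \ref{lm:two_p3_surfaces} then gives $\pind=(0,0)$ for each such pair, so $\partial\uniP\cap\partial\uniQ=\emptyset$ and $\uniP\cap\uniQ$ contains no circle inessential in either surface. Any remaining intersection is a circle essential in both pairs of pants in which it lies.

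Suppose, for contradiction, that such an essential circle $\alpha\subset P\cap Q$ persists. Since a pair of pants has only essential simple closed curves parallel to a boundary component, $\alpha$ cobounds in $P$ an annulus $A_P$ with some $C_i\subset\partial P$, and in $Q$ an annulus $A_Q$ with some $C_j'\subset\partial Q$. By iteratively replacing $\alpha$ with a circle of $\uniP\cap\uniQ$ innermost in $A_P$ toward $C_i$ and then in $A_Q$ toward $C_j'$, and using finiteness of $\uniP\cap\uniQ$, we may arrange $A_P\cap\uniQ=\alpha=A_Q\cap\uniP$.

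Form the $2$-sphere $S:=D_i\cup A_P\cup A_Q\cup D_j'$, where $D_i,D_j'\subset\HK$ are disjoint meridian disks bounded by $C_i,C_j'$ (disjointness is achieved by the standard disk-swapping argument in a handlebody). Then $S$ meets $\HK$ in two disks, and the annulus $A:=A_P\cup A_Q = S\cap\Compl\HK$ is incompressible: any compressing disk can be isotoped so that its boundary is parallel to the core of $A$, hence isotopic to $\alpha$ in $P$; compressing $P$ along it then contradicts the incompressibility of $P$. If $A$ is not $\partial$-parallel, then $S$ is a $2$-decomposing sphere of $\pair$, contradicting the $2$-indecomposability hypothesis.

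If instead $A$ is $\partial$-parallel, it cobounds in $\Compl\HK$ a product region $R\cong A\times[0,1]$ with an annulus $B\subset\partial\HK$ between $C_i$ and $C_j'$. The plan is to use $R$ to produce an ambient isotopy supported near $R$ that pushes $A_Q$ across and just past $B$, thereby eliminating $\alpha$ and strictly decreasing $(\bd n_\partial,\bd n_\circ)$, contradicting minimality. The main obstacle is verifying that $R$ is disjoint from the remaining members of $\systemP\cup\systemQ$ so that the isotopy does not introduce new intersections; this is expected to follow from the nested-innermost choice of $\alpha$ together with a judicious choice of $B$ as a component of $\partial\HK\setminus(\partial\uniP\cup\partial\uniQ)$, but requires careful combinatorial control of the boundary pattern on $\partial\HK$.
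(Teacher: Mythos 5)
Your overall strategy coincides with the paper's: minimize the total index, invoke Lemmas \ref{lm:mini_index} and \ref{lm:two_p3_surfaces} to kill boundary arcs and inessential circles, and then rule out circles essential on both sides by building the annulus $A_P\cup A_Q$ and appealing to $2$-indecomposability. Two steps, however, do not go through as written. First, the claim that one can arrange $A_P\cap\uniQ=\alpha=A_Q\cap\uniP$ simultaneously is not justified: the alternating ``innermost in $P$, then innermost in $Q$'' replacement has no monovariant and can a priori cycle (e.g.\ two circles $\alpha,\beta$ of $P\cap Q$ with $\beta$ closer to $C_i$ in $P$ but $\alpha$ closer to $C_j'$ in $Q$), so no circle need be clean on both sides. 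Fortunately the symmetric condition is unnecessary: choosing $\alpha$ only so that $A_P$ is outermost in $P$, i.e.\ $A_P\cap\uniQ=\alpha$, already gives $A_P\cap A_Q\subset A_P\cap\uniQ=\alpha$, so $A_P\cup A_Q$ is embedded; this one-sided choice is exactly what the paper makes.

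Second, and more seriously, the case where $A:=A_P\cup A_Q$ is $\partial$-parallel --- which is precisely the case that survives the $2$-indecomposability hypothesis --- is left unfinished: you describe the intended push across the product region but explicitly defer the verification that it removes $\alpha$ without creating new intersections. Since this is where the contradiction with minimality of $(\bd n_\partial,\bd n_\circ)$ is actually derived, the proof is incomplete as it stands. The paper's resolution is to note that the parallelism region is a solid torus $V$ cut off from $\Compl\HK$ with $H_1(A_Q)\rightarrow H_1(V)$ an isomorphism, so that $A_Q$ is parallel through $V$ to the closure of $\partial V-A_Q$; one then isotopes $Q$ through $V$ inside $\Compl\HK-(\uniQ-\alpha)$, removing $\alpha$ and strictly decreasing the total index. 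If you keep your argument, you should replace the doubly-innermost choice by the one-sided outermost choice and then carry out and justify this last isotopy (in particular its compatibility with the remaining members of $\systemP$ and $\systemQ$); that is the substantive point your write-up still owes.
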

\begin{proof}
By the isotopy extension theorem, it suffices to show that $\uniP, \uniQ$ can be isotoped so that they are disjoint.
Isotope $\uniP,\uniQ$ so that they minimize the total index $\bdtind$. 
By Lemma \ref{lm:mini_index}, for every two components $P\in \systemP$, $Q\in \systemQ$, $P,Q$ minimize the partial index $\pind$ in their isotopy classes. Thus by 
Lemma \ref{lm:two_p3_surfaces}, 
we have $\pind=(0,0)$, and hence $\bn_\partial=0$.

\cout{
Consider $P\in \systemP$, $Q\in \systemQ$, and denote by $\pind$ their partial index.  
 
\centerline{\bf Claim: $\pind=(0,0)$.}
By Lemma \ref{lm:two_p3_surfaces}, it suffices to show that $\pind$ is minimized in the isotopy classes of $P$ and $Q$. 
Suppose $\pind$ is not minimized, and there are 
$\tilde P, \tilde Q$ isotopic to $P,Q$ respectively, 
with $\pindtilde\prec \pind$. 
Then either $\tilde n_\partial <n_\partial$ or 
$\tilde n_\partial=n_\partial$ and $\tilde n_\otimes <n_\otimes$.
If it is the former, 
then by \cite[Proposition $1.7$]{FarMar:11}, $\partial P,\partial Q$ cut off a disk $D$ from $\partial\HK$ with 
$\partial D=\beta\cup\gamma$ and 
$\beta,\gamma$ two arcs in $\partial P, \partial Q$, respectively. If  
$(\uniQ-Q)\cap D=\emptyset$, then one can isotope $Q$ via $D$ so members in $\systemQ$ remain mutually disjoint and 
$\partial \gamma$ is removed from $\partial\uniP\cap\partial\uniQ$ without introducing new intersection, contradicting 
$(\bn_\partial,\bn_\circ)$ is minimized.
If $(\uniQ-Q)\cap D\neq \emptyset$, 
then there exists an outermost arc $\gamma'\subset \uniQ\cap D$ in $D$ that cuts off a disk $D'\subset D$ disjoint from $\gamma$. Let $\gamma'\subset Q'\in \systemQ$. Then one can isotope $Q'$ across $D'$ so the intersection $\partial\gamma'\subset \partial\uniP\cap\partial\uniQ$ is removed and members in $\systemQ$ remains mutually disjoint without introducing new intersections between $\uniP$ and $\uniQ$, contradicting $(\bn_\partial,\bn_\circ)$ is minimized.

Now, if $\tilde n_\partial =n_\partial$ and $\tilde n_\otimes <n_\otimes$, then $n_\otimes>0$. In particular, there exists an innermost inessential circle $\alpha\in\alphaP$; $\alpha$ is inessential also in $Q$, given the incompressibility of $Q$. Let $D_p,D_q$ be the disks cut off by 
$\alpha$ from $P,Q$, respectively. 
$D_p\cup D_q$ bounds a $3$-ball $B$ in $\Compl\HK$. If $(\uniQ-Q)\cap D_p=\emptyset$, then one can isotope 
$Q$ across 
$B$ so the intersection $\alpha$ is removed and 
$\systemQ$ remains a set of disjoint surfaces with no new intersection introduced,
contradicting the minimality 
$(\bn_\partial,\bn_\circ)$.
If $(\uniQ-Q)\cap D_p\neq\emptyset$, 
then there is an innermost circle $\beta\subset (\uniQ-Q)\cap D_p$ in 
$D_p$ that cuts off a disk $D_p'$ from $D_p$. Let $Q'$ be the component containing $\beta$, and $D_q'\subset Q'$ cut off by $\beta$.  
Then 
$D_p'\cup D_q'$ bounds a $3$-ball $B$. Since $\mathring{D}_p'\cap \uniQ=\emptyset$, one can isotope $Q'$ across $B'$ so that the intersection $\beta$ is removed without creating new intersection or altering disjointness of $\systemQ$, contradicting 
the minimality of $(\bn_\partial,\bn_\circ)$.
 
Therefore, $\pind$ is minimized, and by Lemma \ref{lm:two_p3_surfaces}, $\pind=(0,0)$, and hence $\bn_\partial=0$. 
}
Suppose $\bn_\circ\neq 0$. Then there exists 
a circle $\alpha$ in $\uniP\cap\uniQ$  
essential in both $\uniP,\uniQ$. Let 
$P\in\systemP, Q\in\systemQ$ be the components containing $\alpha$. It may be
assumed $\alpha$ is outermost in $P$ and cuts off an annulus $A_p$ disjoint from 
$\uniQ-\alpha$. Denote by $A_q$ the annulus cut off by $\alpha$ from $Q$. 
Then $A:=A_p\cup A_q$ is an incompressible annulus with $\partial A$ bounding two disks in $\HK$. Because $\pair$ is $2$-indecomposable, $A$ is $\partial$-parallel and hence $A_p\cup A_q$ cuts off a solid torus $V$ from $\Compl \HK$ with $H_1(A_q)\rightarrow H_1(V)$ an isomorphism. Thus $Q$ can be isotoped through $V$ in $\Compl\HK-(\uniQ-Q)$ so that the intersection $\alpha$ is removed, contradicting the minimality.
Therefore $\bn_\circ=0$ and hence the assertion.
\end{proof}
 
\begin{theorem}\label{teo:uniqueness}
Suppose $\pair$ is $2$-indecomposable, and either $g=2$ or $\Compl\HK$ is $\partial$-irreducible. 
If $\systemP,\systemQ$ both are maximal,  
then there exists an ambient isotopy $f_t:\pair\rightarrow \pair$ such that
$f_1(\uniP)=\uniQ$.  
\end{theorem}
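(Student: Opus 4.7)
The plan is to combine Lemma \ref{lm:two_systems} with the maximality hypothesis: first arrange $\uniP$ and $\uniQ$ to be disjoint, then use maximality to set up a parallelism-bijection between $\systemP$ and $\systemQ$, and finally lift those parallelisms to the required ambient isotopy of $\pair$.

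First I would apply Lemma \ref{lm:two_systems} to obtain an ambient isotopy after which $\uniP \cap \uniQ = \emptyset$, so that $\systemP \cup \systemQ$ becomes a collection of mutually disjoint $\p 3$-surfaces in $\Compl\HK$. Next, I would show that each $P \in \systemP$ is parallel in $\Compl\HK$ to some $Q \in \systemQ$: otherwise $\systemQ \cup \{P\}$ would be a $\p 3$-system properly containing $\systemQ$, contradicting the maximality of $\systemQ$. Non-parallelism within $\systemP$ and within $\systemQ$, together with the transitivity of parallelism, then forces the resulting assignment $\phi:\systemP \to \systemQ$ to be a well-defined injection; the symmetric argument applied to $\systemP$ shows $\phi$ is a bijection.

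For each $P \in \systemP$, the parallelism of $P$ and $\phi(P)$ in $\Compl\HK$ yields a product region $R_P \cong P \times I$ whose vertical boundary sits in $\partial\HK$ as three annuli cobounded by $\partial P$ and $\partial\phi(P)$. The regions $\{R_P\}_{P\in \systemP}$ are pairwise disjoint, since any member of $\systemP \cup \systemQ$ lying in the interior of $R_P$ would be parallel to $P$, contradicting non-parallelism within $\systemP$ or $\systemQ$. The product structure on each $R_P$ then extends across $\HK$, because both $P \cap \HK$ and $\phi(P) \cap \HK$ consist of three parallel meridian disks whose matched pairs cobound $3$-balls in $\HK$; concatenating the resulting local product isotopies produces the desired ambient isotopy $f_t$ of $\pair$ with $f_1(\uniP) = \uniQ$.

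The main obstacle I expect is this last step: promoting the parallelism of each pair $(P, \phi(P))$ in $\Compl\HK$ to a parallelism within the pair $\pair$, that is, verifying that the product region $R_P$ in the exterior, the cobounding annuli in $\partial\HK$, and the $3$-balls cobounded by the meridian disks in $\HK$ assemble into a coherent global product structure that supports an ambient isotopy carrying $\uniP$ onto $\uniQ$.
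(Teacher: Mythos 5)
Your proposal is correct and follows essentially the same route as the paper, whose proof of Theorem \ref{teo:uniqueness} is simply the one-line statement that it follows from Lemma \ref{lm:two_systems} and the maximality of $\systemP$ and $\systemQ$. You have merely made explicit the details the paper leaves implicit: disjointness via Lemma \ref{lm:two_systems}, the parallelism bijection forced by maximality, and the assembly of the product regions into an ambient isotopy of $\pair$.
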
 
\begin{proof}
It follows from Lemma \ref{lm:two_systems} 
and the maximality of $\systemP$ and $\systemQ$. 
\end{proof}

\section{Topology of $\p 3$-decomposable handlebody-knots}\label{sec:topology}
Let $\pair$ be a
$\p 3$-decomposable handlebody-knot, 
$\systemP$ a $\p 3$-system, and $\uniP$ 
the union of 
the $\p 3$-surfaces in $\systemP$.

\subsection{Components of $\p 3$-decomposition}  
A component $X\subset\Compl\HK$ cut off by $\uniP$ is called  
an \emph{end} if $X$ meets $\uniP$ at one component, namely $\vert\partial_f X\vert=1$. 

\begin{lemma}\label{lm:end}
Given a component $X\subset\Compl\HK$ cut off by $\uniP$, the following three statements are equivalent:
\begin{enumerate}[label=\textnormal{(\roman*)}]
\item\label{itm:end} $X$ is an end.  
\item\label{itm:union} $X\cap \HK$ is a union of a once-punctured closed surface 
and an annulus. 
\item\label{itm:one_torus} $X\cap \HK$ contains a once-punctured closed surface $F$. 
\end{enumerate}
\end{lemma}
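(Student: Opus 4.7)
The plan is to prove the cyclic chain $(i)\Rightarrow(ii)\Rightarrow(iii)\Rightarrow(i)$; the implication $(ii)\Rightarrow(iii)$ is immediate, since the once-punctured closed surface appearing in $(ii)$ serves as the desired $F$ in $(iii)$.

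For $(i)\Rightarrow(ii)$, I would invoke the structural description of a $\p 3$-surface recorded in Section~\ref{subsec:arcs} (see Fig.~\ref{fig:P_cuts_surface}). If $X$ is an end then $\partial_f X=\{P_0\}$ for a single $P_0\in\systemP$, and every other $\p 3$-surface of $\systemP$ must lie on the opposite side of $P_0$ in $\Compl\HK$; otherwise it would also appear in $\partial_f X$, contradicting the end condition. Consequently no circle of $\partial\uniP\setminus\partial P_0$ meets the $\partial\HK$-portion of $X$, so $X\cap\HK$ equals $F_1^0\cup A_{23}^0$, a disjoint union of a once-punctured closed surface and an annulus.

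For $(iii)\Rightarrow(i)$, let $F\subset X\cap\HK$ be a once-punctured closed surface component and set $\partial F=C\subset\partial\uniP$, so that $C$ lies in $\partial P_0$ for some $P_0\in\partial_f X$. The first step is to rule out $C$ being the middle circle $C_2^0$: every component of $\partial\HK\setminus\partial\uniP$ adjacent to $C_2^0$ is adjacent to one of the outer circles $C_1^0$, $C_3^0$ as well (since those lie on the same side of $C_2^0$ as the component and belong to $\partial\uniP$), forcing $|\partial F|\ge 2$ and contradicting $\partial F=C$. So $C$ is an outer circle, say $C_1^0$, and the same ``no extra boundary'' reasoning forces $F=F_1^0$ and shows that no other circle of $\partial\uniP$ lies inside $F_1^0$. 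In particular $X$ sits on the $X$-side of $P_0$, and $X\cap\HK\subseteq F_1^0\cup A_{23}^0$.

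The main obstacle is then the final step: excluding any second $\p 3$-surface $P_1\in\partial_f X$. By the previous paragraph such a $P_1$ must satisfy $\partial P_1\subset A_{23}^0$, and the three circles of $\partial P_1$, being parallel on $\partial\HK$ and confined to the annulus $A_{23}^0$, must be parallel to its core and hence to $\partial P_0$; correspondingly the three disks of the $\p 3$-sphere $\Stwo_1$ are parallel to $D_2^0, D_3^0$ in $\HK$ and may be isotoped into the product region $V_{23}$. I would conclude by tracking how the two disjoint spheres $\Stwo_0,\Stwo_1\subset\sphere$ sit as nested spheres, and how the pieces $V_1,V_{12},V_{23},V_3$ (with $V_{23}$ further subdivided by $\Stwo_1$) distribute among the three regions of $\sphere\setminus(\Stwo_0\cup\Stwo_1)$: the resulting assignment is inconsistent, since neither side of $\Stwo_1$ can serve as the ``inside'' ball relative to $\Stwo_0$ without swallowing a piece of $\HK$ already known to sit on the other side of $\Stwo_0$. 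Equivalently, $P_1$ would turn out to be parallel to $P_0$ in $\Compl\HK$, violating the non-parallelism clause in the definition of $\systemP$. Once this contradiction is delivered, $\partial_f X=\{P_0\}$, so $X$ is an end, closing the cycle.
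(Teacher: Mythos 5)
Your proof is correct and follows the paper's proof of Lemma~\ref{lm:end} quite closely: the implication (i)$\Rightarrow$(ii) is the same one-line observation about parallel boundary circles, and in (iii)$\Rightarrow$(i) you, like the paper, first identify $F$ with the entire once-punctured closed surface cut off by the relevant boundary circle of $P_0$, which forces the boundary of any second frontier component $P_1$ into the annulus $A_{23}^0$. The only real divergence is in how the final contradiction is extracted: the paper observes that the component $Y'$ cut off by $P_1$ on the far side from $X$ then meets $\partial\HK$ only in annuli, which is incompatible with $P_1$ being a $3$-decomposing surface (one side of the associated sphere must contain a positive-genus piece of $\partial\HK$); you instead isotope the three meridian disks of the second sphere into the product region $V_{23}$ and derive the inconsistency from the nesting of the two disjoint spheres and the alternating distribution of the pieces $V_1,V_{12},V_{23},V_3$. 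Both routes work and encode the same geometry; the paper's is slightly more economical since it never needs the disks to be repositioned. Two small points: your claim that a component adjacent to $C_2^0$ must also be adjacent to $C_1^0$ or $C_3^0$ is not literally true (it may instead abut a circle of another member of $\systemP$ lying in the same annulus), but the intended conclusion survives because any such component sits inside an annulus and so cannot be a positive-genus once-punctured surface; and your closing remark that the contradiction is ``equivalently'' a parallelism between $P_1$ and $P_0$ is a loose gloss rather than an equivalence --- the configuration is simply impossible, and no parallelism need be produced.
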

\begin{proof}
\ref{itm:outermost}$\Rightarrow$\ref{itm:union}: Since $X$ is an end, the frontier $P:=\partial_f X$ has one component.
Since components of $\partial P$ are parallel in $\partial\HK$, both $X\cap\HK$ and $\Compl X\cap\HK$ consist of a once-punctured closed surface and an annulus.  

\ref{itm:union}$\Rightarrow$\ref{itm:one_torus} is clear.
To see \ref{itm:one_torus}$\Rightarrow$\ref{itm:outermost}, we suppose $X$ is not an end, and $P$ is the component of the frontier $\partial_f X$ that meets the once-punctured closed surface $F$, and $P'\subset\partial_f X$ is another component.
Let $Y,Y'\subset \Compl\HK$ be the components cut off by $P,P'$ with $X\cap Y=P,X\cap Y'=P'$, respectively. 
Then $\Compl Y\cap\HK$ consists of a once-punctured closed surface $F'$ and an annulus $A$. By $F\subset X\subset\Compl Y$, we have $F=F'$.  
Since $P'\cap F=\emptyset$, the connectedness of $\HK$ implies that $Y'\cap \partial\HK=Y'\cap A$ is a union of annuli, 
contradicting $P'$ is a $3$-decomposing surface.  
\end{proof}
 
Lemma \ref{lm:end} implies that if $X$ is not an end, then every component of $X\cap\HK$ is an $n$-punctured closed surface $S$ with $n\geq 2$. The topology of $S$ is in relation to $\uniP$ is heavily constrained when $S$ is not an annulus.

\begin{figure}[h!]
\begin{subfigure}{.48\linewidth}
\centering
\begin{overpic}[scale=.17,percent]{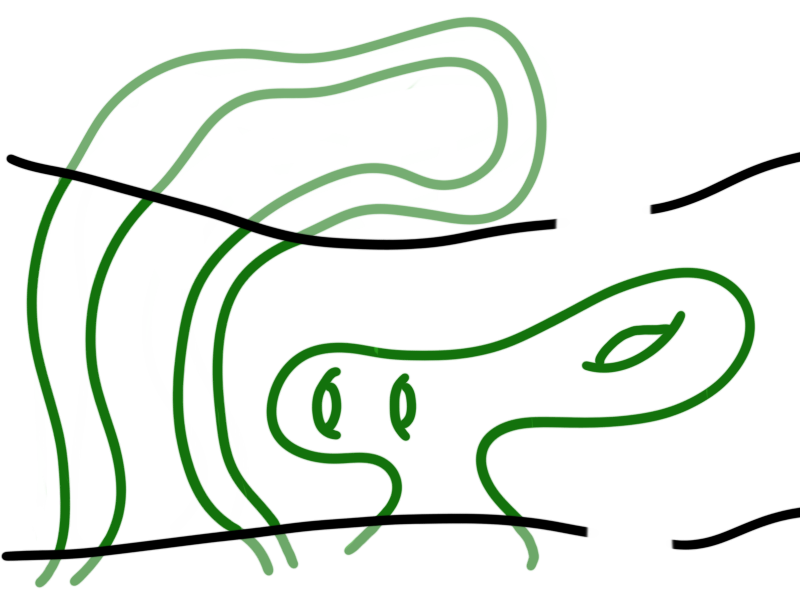}
\put(72,46){\small $P'$}
\put(75,5){\small $P$}
\put(92,18){\small $X$}
\put(93,0){\small $Y$}
\put(90,60){\small $Y'$}
\put(60,24){\small $F$}
\put(4.8,38){\small $A$}
\end{overpic}
\caption{Components $X,Y,Y'$.}
\label{fig:punctured_surface_end}
\end{subfigure}
\begin{subfigure}{.48\linewidth}
\centering
\begin{overpic}[scale=.17,percent]{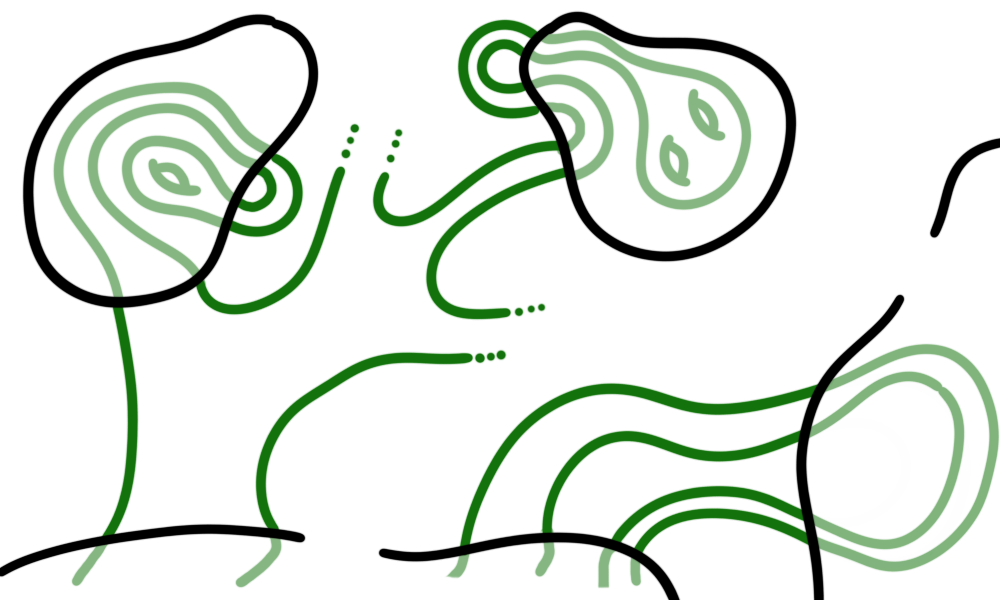}
\put(17,17){\small $S$}
\put(49,9){\small $A$}
\put(70,25){\small $X$}
\put(15,0){\small $Y$}
\put(85,12){\small $Y'$}
\put(32,4){\small $P$}
\put(90,31){\small $P'$}
\put(33,35){\footnotesize $F$}
\put(12,31){\footnotesize \texttransparent{0.5}{$F$}}
\put(64.7,48.5){\footnotesize \texttransparent{0.5}{$F$}}
\end{overpic}
\caption{Surfaces $S\subset F$ and $A$.}
\label{fig:punctured_sphere_meet_all}
\end{subfigure}
\caption{} 
\end{figure}

\begin{lemma}\label{lm:punctured_sphere}
Given a component $X\subset\Compl\HK$ cut off by $\uniP$, suppose $X\cap \HK$ contains an $n$-punctured closed surface $S$ with $n\geq 2$ and $S$ not an annulus. Then
\begin{enumerate}[label=\textnormal{(\roman*)}]
\item\label{itm:meetonce}
every component $P$ of $\partial_f X$ meets $S$ at exactly one component of $\partial P$;
\item\label{itm:otherannuli}
every other component in $X\cap\HK$ 
is an annulus $A$ with $A$ meeting only one component of $\partial_f X$.
\end{enumerate}
\end{lemma}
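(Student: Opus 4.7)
My plan is to analyze how each component of $X \cap \partial\HK$ is positioned with respect to each $P \in \partial_f X$, using two observations: that every circle of $\partial\uniP$ is meridional in $\HK$ and hence essential in $\partial\HK$, and that $S$ is non-annular. Fix $P \in \partial_f X$; with the convention of Subsection~\ref{subsec:arcs}, the three parallel circles of $\partial P$ cut $\partial\HK$ into $F_1^P, A_{12}^P, A_{23}^P, F_3^P$ with $F_1^P, A_{23}^P$ on the $X$-side. Any component $T$ of $X \cap \partial\HK$, being connected in $\partial\HK - \partial\uniP \subseteq \partial\HK - \partial P$, lies in a single one of these four regions --- necessarily $F_1^P$ or $A_{23}^P$.

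A meridional circle contained in the annulus $A_{23}^P$ is essential in $A_{23}^P$, hence parallel to its core, so cutting $A_{23}^P$ along such circles produces only sub-annuli. In particular, $S \subset A_{23}^P$ would force $S$ to be an annulus, contradicting the hypothesis; thus $S \subset F_1^P$. Since only $C_1^P$ among the three circles of $\partial P$ lies in $\overline{F_1^P}$, the circle $C_1^P$ is the only possible component of $\partial P$ on $\partial S$. This establishes the ``at most one'' half of \ref{itm:meetonce}.

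To prove \ref{itm:otherannuli}, let $T \neq S$ be another component of $X \cap \partial\HK$. If $T \subset A_{23}^P$, then $T$ is annular by the previous paragraph; I claim that both of its boundary circles lie on $\partial P$, so $T = A_{23}^P$ meets only $P$. Indeed, if instead a boundary circle $\delta \in \partial P'$ with $P' \neq P$ lay in $A_{23}^P$, then $\delta$ would be parallel in $\partial\HK$ to a circle of $\partial P$, both meridional; irreducibility of $\HK$ produces a product region cobounded by the corresponding meridional disks, and an innermost-circle argument combined with the non-parallelism of members of $\systemP$ leads to a contradiction. If instead $T \subset F_1^P$, I must exclude $T$ being non-annular. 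I plan to handle this by induction on the depth of the decomposition tree (whose nodes are the components of $\Compl\HK - \uniP$): tracking how nested $\p 3$-surfaces $P' \subset V_1^P$ subdivide the handlebody piece $V_1^P$ into sub-cylinders and sub-handlebodies, each nested step introduces at most one additional non-annular $\HK$-piece on the $X$-side, so that $X$ is adjacent to precisely one non-annular final piece of $\HK$, whose surface part must be $S$. The meridional-parallelism argument then shows the two boundary circles of any non-$S$ annular component lie on a single $P \in \partial_f X$.

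Finally, the remaining half of \ref{itm:meetonce} follows from \ref{itm:otherannuli}: the component of $X \cap \partial\HK$ containing a neighbourhood of $C_1^P$ in $F_1^P$ cannot be a non-$S$ annulus, because by \ref{itm:otherannuli} such an annulus would need both boundaries on a single $P'' \in \partial_f X$, and one boundary is $C_1^P \in \partial P$, forcing $P'' = P$ --- yet the only circle of $\partial P$ in $\overline{F_1^P}$ is $C_1^P$, which cannot appear twice as the boundary of a single annulus. Hence this component is $S$, giving $C_1^P \in \partial S$, so $P$ meets $S$ at exactly one component of $\partial P$. The main obstacle I anticipate is the induction ruling out non-annular $T \neq S$, which requires careful bookkeeping of how nested members of $\systemP$ subdivide the handlebody pieces $V_1^P$ and how these subdivisions translate into components of $X \cap \partial\HK$.
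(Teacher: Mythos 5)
Your setup agrees with the paper's: each component of $X\cap\partial\HK$ lies in one of the two regions $F_1^P$ or $A_{23}^P$ on the $X$-side of $P$, the non-annular $S$ must lie in $F_1^P$, and hence $P$ can meet $S$ only along $C_1^P=\partial F_1^P$. But the two steps that carry the real content of the lemma both have genuine gaps. First, your exclusion of a circle $\delta\in\partial P'$, $P'\neq P$, from $A_{23}^P$ rests on the principle that a boundary circle of $P'$ parallel in $\partial\HK$ to a boundary circle of $P$ contradicts the non-parallelism of the members of $\systemP$. That principle is false: two disjoint, non-parallel $\p 3$-surfaces can have all their boundary circles mutually parallel in $\partial\HK$ --- this is exactly what happens when they cobound a component meeting $\HK$ in three annuli, the first case of Theorem \ref{teo:ends_connectors}\ref{itm:higherconnector} --- so no innermost-circle surgery on the parallel meridian disks can yield a contradiction from parallelism of boundaries alone. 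Your argument for this step never uses $S$, and the statement it would prove is false for components without a non-annular piece. The correct mechanism (the paper's) is: every component $P'$ of $\partial_f X$ must meet $S$, and only along $C_1^{P'}=\partial F_1^{P'}$; if an annulus in $A_{23}^P$ had one boundary on $\partial P$ and the other on $\partial P'$, the mutual parallelism of the three circles of $\partial P'$ would force the two boundary circles $S\cap P$ and $S\cap P'$ of $S$ to be parallel in $\partial\HK$, making $S$ an annulus. Note this also inverts your order of argument: the $A_{23}^P$ step of \ref{itm:otherannuli} requires the ``at least one'' half of \ref{itm:meetonce}, which you postpone until after \ref{itm:otherannuli}.

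Second, ruling out a non-annular component $T\neq S$ is the heart of the lemma, and your induction on a ``decomposition tree'' is only a plan: the inductive step (``each nested step introduces at most one additional non-annular piece on the $X$-side'') is asserted, not proved, and is essentially a restatement of what must be shown. No induction is needed. The paper first proves that every $P'\subset\partial_f X$ meets $S$: otherwise the component $Y'$ cut off by $P'$ on the far side from $X$ would satisfy $Y'\cap\partial\HK\subset A_{23}^P$, a union of annuli, which is impossible because each side of a $\p 3$-surface contains a once-punctured closed surface of positive genus. Granting this, any component of $X\cap\partial\HK$ that lies in $F_1^{P'}$ and meets $P'$ is the unique component adjacent to $C_1^{P'}$ from inside $F_1^{P'}$ and therefore equals $S$; every component other than $S$ thus lies in some $A_{23}^{P'}$ and is an annulus meeting only $P'$. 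You should replace both the product-region argument and the induction by this connectivity argument.
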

\begin{proof} 
Observe first there is at least one component $P\subset\uniP$ meeting $S$. Denote by $Y\subset \Compl\HK$ the component cut off by 
$P$ with $Y\cap X=P$, and by 
$F,A$ the once-punctured closed surface and annulus in $\Compl Y\cap \HK$, respectively. 
Since $S$ is not an annulus, we have $S\subset F$.

\ref{itm:meetonce}: 
Suppose there exists a component $P'$ 
disjoint from $S$. Let $Y'\subset\Compl\HK$ be the components cut off by $P'$ with $Y'\cap X=P'$. 
By the connectedness of $\partial\HK$, $P'$ meets $A\cup F$, yet because $S\cap P'=\emptyset$, we have $F\cap P'=\emptyset$ as well. Therefore, $Y'\cap\HK=Y'\cap A$, which is a union of annuli, an impossibility.
In addition, since $S\subset F$, we have $P\cap S= P\cap F=\partial F$, which is a circle. 

\ref{itm:otherannuli}: Since every component of $\partial_f X$ meets $S$, it suffices to show that every other component $S'$ of $X\cap\HK$ meeting $P$ is an annulus, and it is clear since $S'\subset A$. 
Suppose now $S'$ meets the other component $P'$ of $\partial_f X$. 
Then the facts that $S'$ being an annulus and $\partial P$ being parallel in $\partial\HK$ imply the two loops $S'\cap P', S\cap P$
are parallel in $\partial\HK$. Since $S$ 
is not an annulus, the intersection $S\cap P'$ cannot be parallel to $S\cap P$ and hence to $S'\cap P'$ in $\partial\HK$, contradicting $P'$ is a $\p 3$-surface.
Thus $S'$ meets $P$ only. 
\end{proof}
 
We have the following characterization of a $\p 3$-decomposable handlebody-knot.
\begin{theorem}\label{teo:ends_connectors}
Let $X$ be a component cut off from $\Compl\HK$. Then
\begin{enumerate}[label=\textnormal{(\roman*)}]
\item\label{itm:end_ii}
$\vert\partial_fX\vert=1$ 
if and only if $X\cap\HK$ is a union of 
once-punctured closed surface and an annulus.
\item\label{itm:higherconnector}
$\vert\partial_fX\vert\geq 2$ 
if and only if either $X\cap\HK$ is a union of 
three annuli with each meetings both components of $\partial_f X$ or $X\cap\HK$ is a union of an $n$-punctured closed surface and $n$ annuli, each of which meets only one component of $\partial_f X$.
\end{enumerate}
\end{theorem}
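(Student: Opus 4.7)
The plan is to derive Part (i) directly from Lemma \ref{lm:end}, whose equivalence (i)$\Leftrightarrow$(ii) is precisely the required biconditional, and to establish Part (ii) by combining Lemmas \ref{lm:end} and \ref{lm:punctured_sphere} with an additional case analysis for the all-annuli configuration. For the $(\Leftarrow)$ direction of (ii), both described structures of $X\cap\HK$ lack a once-punctured closed surface component (the $n$-punctured surface has $n\ge 2$, and the remaining components are annuli), so by Part (i) one has $|\partial_f X|\ne 1$, whence $|\partial_f X|\ge 2$.

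For the $(\Rightarrow)$ direction, suppose $|\partial_f X|\ge 2$. By Lemma \ref{lm:end}, no component of $X\cap\HK$ is a once-punctured closed surface, so every component has $n\ge 2$ boundary circles. If some component $S$ is not an annulus, Lemma \ref{lm:punctured_sphere} applies verbatim: $S$ meets each $P\in\partial_f X$ at exactly one boundary circle (so $n=|\partial_f X|$), and the remaining components are annuli bounded on a single $P$; a count of the residual boundary circles (two per $P$, paired into one annulus each) yields exactly $|\partial_f X|$ annuli, matching the second configuration.

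If every component is an annulus, then since each $P\in\partial_f X$ carries an odd number (three) of boundary circles to be paired by annuli, not all annuli incident to $P$ can be self-loops, so some annulus $S$ must bridge two distinct components $P,P'\in\partial_f X$. Adapting the proof of Lemma \ref{lm:punctured_sphere}\ref{itm:meetonce} with $S$ as the distinguished $2$-punctured ``surface'', I would argue that every $Q\in\partial_f X$ must meet $S$; since $\partial S$ has only two circles, this forces $|\partial_f X|=2$. The remaining two boundary circles on each of $P,P'$ then pair either into two further bridging annuli (giving three mutually bridging annuli, the first configuration) or into a pair of self-loops, one on each of $P,P'$ (giving the second configuration with $n=2$ and $S$ itself playing the role of the annular $2$-punctured surface).

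The principal obstacle is the adaptation of Lemma \ref{lm:punctured_sphere}\ref{itm:meetonce} to the case where $S$ is an annulus, since the original proof exploits the fact that $S$ is contained in the cap on the $X$-side of $P$, a conclusion derived from the non-annulus hypothesis. For $S$ an annulus, one must consider both possibilities (namely $S$ lying in the cap or in the annular piece, in the terminology of Section \ref{subsec:arcs}) on each of the $X$-sides of $P$ and $P'$; with careful tracking, the original contradiction---that the component $Y'$ of $\Compl\HK-\uniP$ on the opposite side of $Q$ from $X$ has its $\partial\HK$-part consisting only of annuli, contradicting Lemma \ref{lm:end}---should still apply.
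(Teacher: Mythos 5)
Your treatment of part (i), of the ``$\Leftarrow$'' direction of (ii), and of the case where $X\cap\HK$ has a non-annular component all match the paper's proof: the paper likewise quotes Lemma \ref{lm:end} for (i), declares ``$\Leftarrow$'' clear, and in the non-annular case reads the two configurations off Lemma \ref{lm:punctured_sphere}. The divergence, and the problem, is in the all-annuli case, exactly where you flag your ``principal obstacle.'' The adaptation of Lemma \ref{lm:punctured_sphere}\ref{itm:meetonce} to an annular $S$ does not go through, and the gap is not merely one of careful tracking. The original proof uses the non-annularity of $S$ in an essential way: it forces $S$ to lie in the once-punctured closed surface $F$ of $\Compl Y\cap\partial\HK$ (an annulus cannot contain a positive-genus or $\geq 3$-punctured subsurface with essential boundary) and to absorb the circle $\partial F$, which is what leaves only the annulus $A$ available for the far side $Y'$ of a surface $P'$ missing $S$, yielding the contradiction with Lemma \ref{lm:end}. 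When $S$ is a thin bridging annulus, a third component $Q$ of $\partial_f X$ disjoint from $S$ causes no such contradiction: the once-punctured closed surface carried by the component beyond $Q$ can sit inside $F$ (or inside $A$, if $S\subset A$) without touching $S$. Indeed, in the all-annuli situation \emph{every} positive-genus piece of $\partial\HK$ lies on the far side of some $P_i$, so the local argument has nothing to bite on; the obstruction to $\vert\partial_f X\vert\geq 3$ here is genuinely global.

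The paper closes this case differently. Since each component of $X\cap\partial\HK$ is an annulus, its two boundary circles are parallel in $\partial\HK$; combined with the parallelism of the three circles of each $\partial P_i$ and the connectedness of $\partial\HK$, all $3n$ circles of $\partial(\partial_f X)$ are mutually parallel. They therefore cut $\partial\HK$ into annuli together with exactly two once-punctured closed surfaces of positive genus, and since each of the $n$ components $Y_i$ beyond $P_i$ must contain a positive-genus piece (Lemma \ref{lm:end} again), $n\leq 2$. (An equivalent repair: $\chi(\partial\HK)=2-2g=\sum_i(1-2g_i)=n-2\sum_i g_i$ with the disjoint once-punctured surfaces $F_i\subset\partial\HK$ of genus $g_i\geq 1$ satisfying $\sum_i g_i\leq g$, whence $n\leq 2$.) Your parity observation producing a bridging annulus, and your subsequent enumeration of the two possible pairings of the remaining four circles once $n=2$ is known, are both correct and can be kept; only the step ``every $Q\in\partial_f X$ meets $S$'' needs to be replaced by one of these global arguments.
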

\begin{proof}
\ref{itm:end_ii} follows from Lemma \ref{lm:end}.
%
%
%
For \ref{itm:higherconnector}, the implication ``$\Leftarrow$" is clear.
To see the implication ``$\Rightarrow$", we consider first the case 
where $X\cap\HK$ only contains annuli. 
Then observe that $\partial_f X$ 
cuts $\Compl\HK$ into $n+1$ components: 
$X, Y_1,\dots, Y_n$, $n\geq 2$. Since $\HK$ is connected, 
the union $(X\cap\HK)\cup Y_1\cup\dots\cup Y_n$ is connected. Thus, $X\cap\HK$ being a union of annuli implies boundary components of $\partial_f X$ are mutually parallel in $\partial\HK$. In particular, $\partial_f X$ cuts $\partial\HK$ into two once-punctured closed surfaces $F_1,F_2$ and some annuli. This implies $n=2$. 

Suppose now $X\cap\HK$ contains a non-annular component $S$. Then $S$ is a $m$-punctured closed surface with $m\geq 2$ by \ref{itm:end_ii}.
Since $S$ is not an annulus, by Lemma \ref{lm:punctured_sphere}\ref{itm:meetonce}, $n=m$, and the rest of the assertion follows from Lemma \ref{lm:punctured_sphere}\ref{itm:otherannuli}. 
\end{proof}
If the $\p 3$-decomposition has $e$ ends
$X_1,\dots, X_e$, and $X_i\cap\HK$ contains a 
once-punctured closed surface of genus $g_i$,
then by Theorem \ref{teo:ends_connectors}, 
the intersection $\Compl {X_1\cup\dots\cup X_e}\cap \HK$ 
is a union of $e$ annuli and an $e$-punctured closed surface of genus $g_0$, and hence
$\sum_{i=1}^eg_i+g_0=g.$ 
The system $\systemP$ is said to be \emph{full} if $e=g$, and to be \emph{semi-full} if $g_0=0$. 

 
\subsubsection*{Induced handlebody-knots}
Let $X\subset\Compl\HK$ a component cut off by $\uniP$. Then $X$ induces two handlebody-knots $\inducedhkX$ and $\inducedhkx$, where $\hkX:=\Compl X$ and $\hkx:=\HK\cup X$ (see Fig.\ \ref{fig:inducedhk}). 
\cout{
If $X$ is not an end, 
then by Theorem \ref{teo:ends_connectors}, $\hkX$ has a canonical spine given by the dual of the disks in $\hkX$ bounded by boundary components of $\partial_f X$. On the other hand, if 
the $\p 3$-decomposition has $e$ ends and 
$e=g$, then for every end $X$, $X\cap\HK$
consists of an annulus and a once-punctured torus. Thus, the disks in $\hkX$ bounded $\partial_f X$ induces a canonical spine of $\hkX$.
}
If, in addition, $\systemP$ is semi-full and $X$ is not an end or $\systemP$ is full, then the disks in $\hkX$ bounded by $\partial_f X$ induces a spine $\sgX$ of $\hkX$ and hence a spatial graph $\inducedsgX$.
%
\cout{We denote by 
$\sgX$ the canonical spine given above and by $\inducedsgX$ the corresponding spatial graph.}   
\subsubsection*{Example}
\begin{figure}[b]
\begin{subfigure}[b]{.29\linewidth}
\centering
\begin{overpic}[scale=.145,percent]{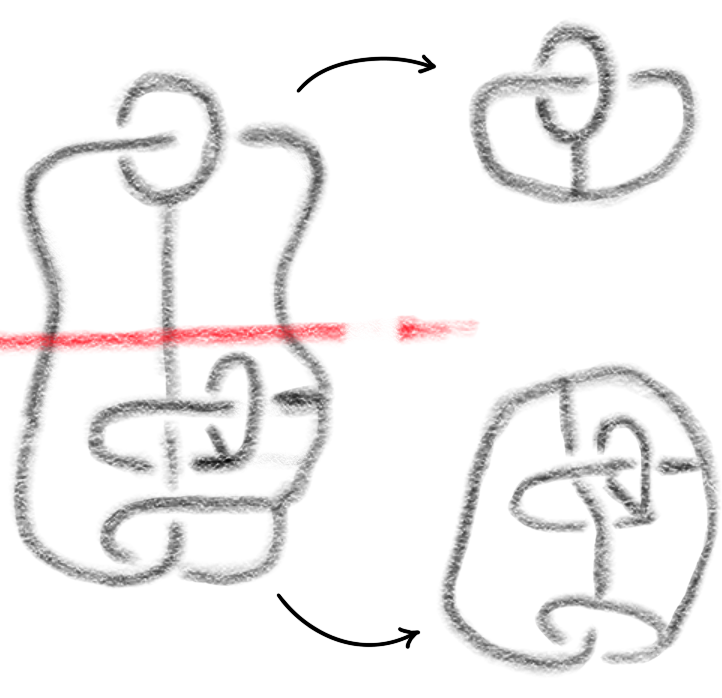}
\put(49,78){\tiny $\HK^X$}
\put(45,12){\tiny $\HK_X$}
\put(48,48){$P$}
\put(50,36){$X$}
\end{overpic}
\caption{$\HK_X$ and $\HK^X$.}
\label{fig:inducedhk}
\end{subfigure}
\begin{subfigure}[b]{.40\linewidth}
\centering
\begin{overpic}[scale=.18,percent]{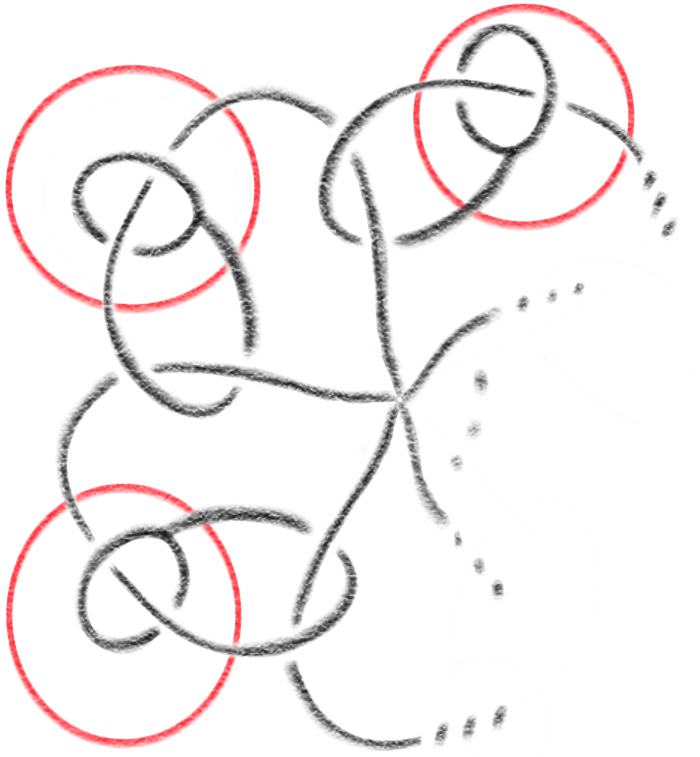}
\put(16,92){\footnotesize $P_1$}
\put(15,36.5){\footnotesize $P_2$}
\put(51.5,95){\footnotesize $P_g$}
\put(15,84){\tiny $X_{\! 1}$}
\put(4,25){\tiny $X_{2}$}
\put(75,90){\tiny $X_{\!g}$}
\put(42,52){\small $Y$}
\end{overpic}
\caption{$\pairwheel$ and $\systemP$.}
\label{fig:pairwheel}
\end{subfigure}
\begin{subfigure}[b]{.29\linewidth}
\centering
\begin{overpic}[scale=.14,percent]{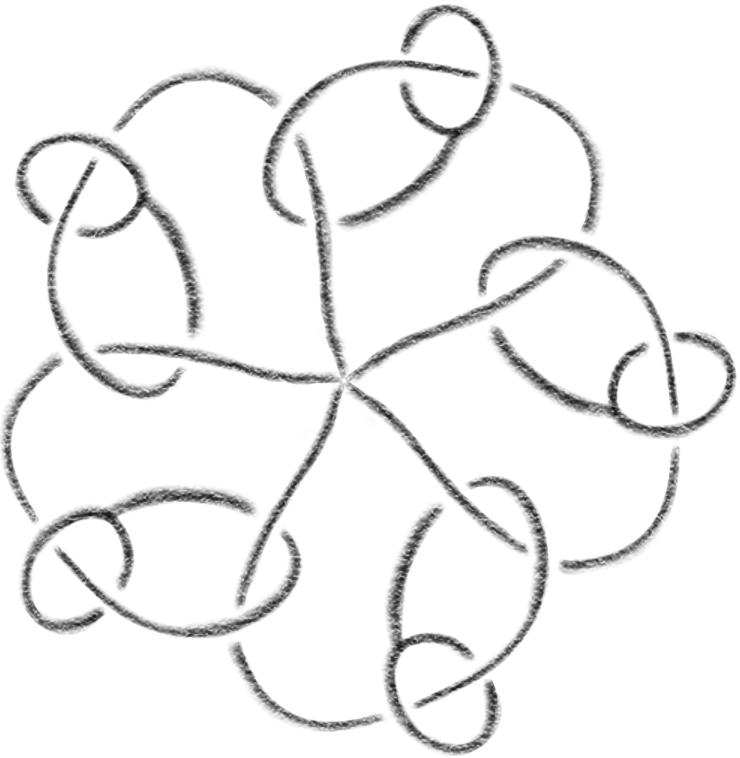}
\end{overpic}
\caption{$(\sphere,\mathrm{W}^5)$.}
\label{fig:wheelfive}
\end{subfigure} 
\begin{subfigure}[b]{.33\linewidth}
\centering
\begin{overpic}[scale=.15,percent]{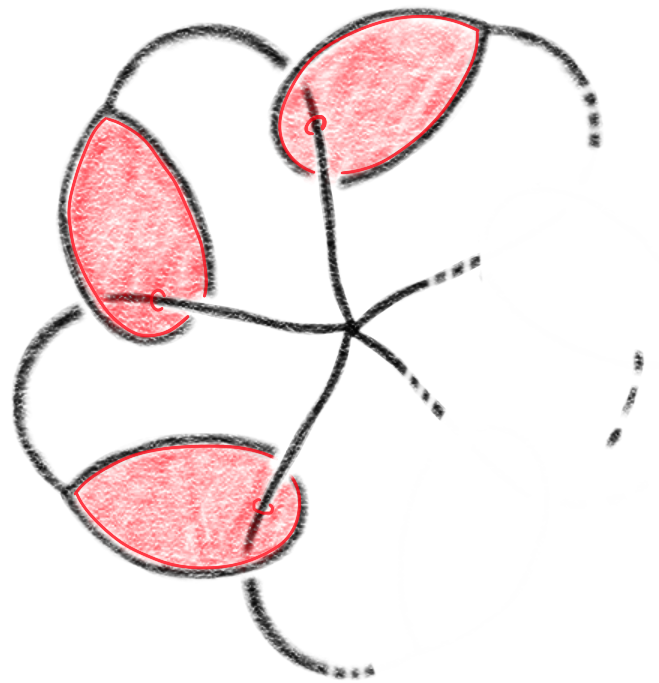}
\put(53,86){\small $A_g$}
\put(16,64){\small $A_1$}
\put(24,24){\small $A_2$}
\end{overpic}
\caption{Annulus $A_i\subset\Compl {\wheel_Y}$.}
\label{fig:wheelY}
\end{subfigure}
\begin{subfigure}[b]{.33\linewidth}
\centering
\begin{overpic}[scale=.14,percent]{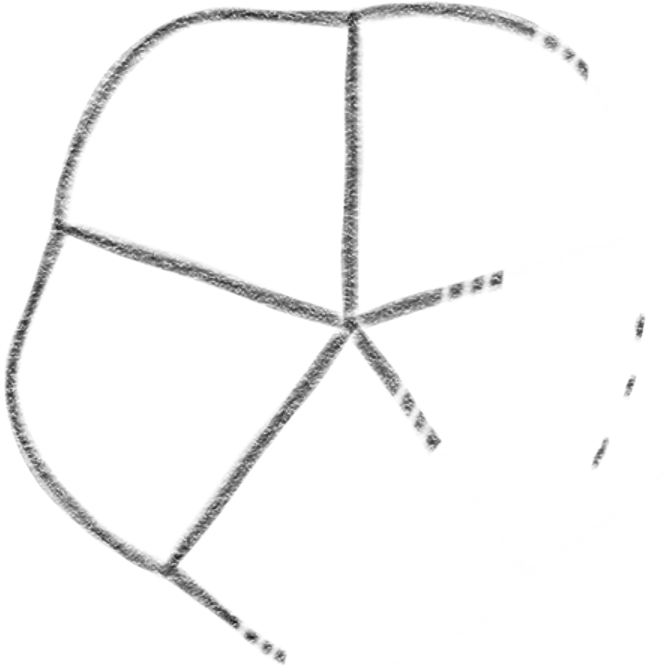}
\put(55,55){$v$}
\put(11,67){$v_1$}
\put(18,7){$v_2$}
\put(54,91){$v_g$}
\end{overpic}
\caption{Spine $\Gamma^g$.}
\label{fig:Gamma^g}
\end{subfigure}
\begin{subfigure}[b]{.32\linewidth}
\centering
\begin{overpic}[scale=.14,percent]{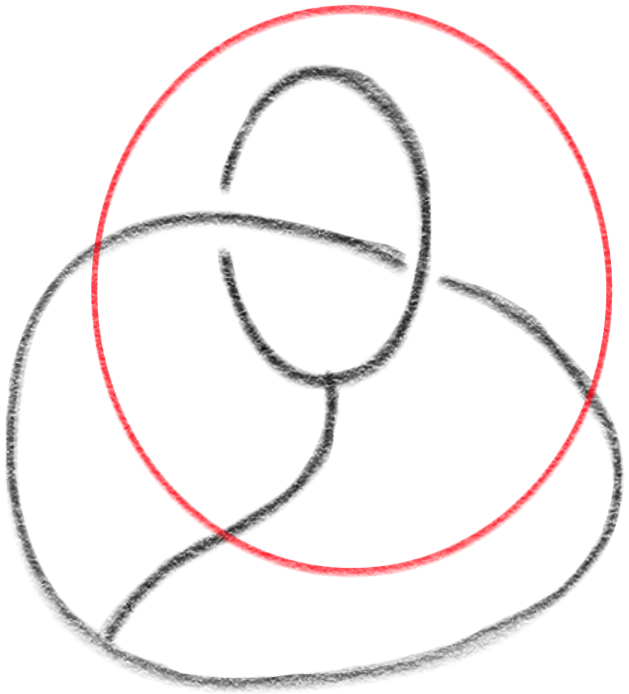}
\end{overpic}
\caption{$(\sphere,\Gamma_{X_i})$.}
\label{fig:GammaX}
\end{subfigure}
\caption{}
\end{figure}

Consider the genus $g>2$ handlebody-knot $\pairwheel$ and the set $\systemP$ of $3$-punctured spheres $P_1,\dots, P_g$ in Fig.\ \ref{fig:pairwheel}; see Fig.\ \ref{fig:wheelfive} for the example with $g=5$.
Denote by $X_i\subset\Compl \wheel$ the components cut off by $\uniP:=\cup_{i=1}^g P_i$ with $\partial_f X_i=P_i$, and by 
$Y\subset\Compl \wheel$ the 
component with $\partial_f Y=P_1\cup\dots\cup P_g$. 
There exist annuli $A_1,\dots, A_g$ in the exterior of $\wheel_Y$ as shown in Fig.\ \ref{fig:wheelY}.
The disks bounded by $\partial A_i$ induces a wheel-like spine $\Gamma^g$ of the handlebody $\wheel_Y\cup \rnbhd{\mathcal{A}}$, where $\mathcal{A}:=A_1\cup\dots\cup A_g$; see Fig.\ \ref{fig:Gamma^g}. 
Let $v$ be the $g$-valent node 
in the center of $\Gamma^g$, and $v_1,\dots, v_g$ the nodes around.
Since $\Gamma^g$ cannot be disconnected by cutting any edge, $A_i$ is incompressible in $\Compl{\wheel_Y}$.

\begin{lemma}\label{lm:example_p3_system}
$\systemP:=\{P_1,\dots, P_g\}$ is a maximal, full $\p 3$-system of $\pairwheel$.
\end{lemma}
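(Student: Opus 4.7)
The plan is to verify three ingredients in order: that $\systemP$ consists of mutually disjoint, pairwise non-parallel $\p 3$-surfaces; that the induced decomposition has $e=g$ ends; and that no further $\p 3$-surface can be adjoined.

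First, I would check each $P_i$ is a genuine $\p 3$-surface. By inspection of Fig.\ \ref{fig:pairwheel}, $P_i$ is a $3$-punctured sphere whose three boundary circles are meridional loops on $\partial \wheel$ bounding parallel meridian disks of the wheel handlebody. Incompressibility of $P_i$ would follow from the fact that both sides of $P_i$ in $\ComplHK$ are topologically non-trivial: one side is the end piece $X_i$ whose intersection with $\wheel$ contains a once-punctured torus, and the other side contains the central $Y$ together with all other ends; a compressing disk of $P_i$ would then contradict the wheel-spine structure (for example, by giving an incompatible grouping of the three parallel meridian disks). Non-boundary-parallelness holds because neither side is a product region. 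Mutual non-parallelness of the $P_i$'s is immediate, since a parallelism $P_i\cong P_j$ would give a product region $P_i\times I$ forced to contain a piece of $Y$ and possibly other ends.

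Next, for fullness I would observe that by construction $\partial_f X_i=P_i$, so each $X_i$ is an end. Lemma \ref{lm:end} then forces $X_i\cap \wheel$ to be an annulus together with a once-punctured closed surface $F_i$; the wheel picture shows $F_i$ is a once-punctured torus, so $g_i=1$. Since there are $g$ ends with $\sum_{i=1}^{g}g_i=g$, the relation $\sum g_i+g_0=g$ from Theorem \ref{teo:ends_connectors} forces $g_0=0$, so $\systemP$ is both full and semi-full.

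The main obstacle is maximality. Suppose, for contradiction, there is a $\p 3$-surface $Q$ disjoint from $\uniP$ and not parallel to any $P_i$. Then $Q$ lies entirely in a single component cut off by $\uniP$: either some $X_i$ or $Y$. If $Q\subset X_i$, I would use the induced handlebody-knot $(\sphere,\HK_{X_i})\simeq (\sphere,\Gamma_{X_i})$ of Fig.\ \ref{fig:GammaX}, which is a genus $2$ handlebody-knot of very simple shape, and argue directly that it admits no $\p 3$-surface whose boundary is a triple of parallel meridians on $\partial \HK_{X_i}$ sitting on the face of $\partial X_i$ coming from $\partial \wheel$. If $Q\subset Y$, the decomposition $Y\cap \wheel=F\cup A_1^\partial\cup\cdots\cup A_g^\partial$ (a $g$-punctured sphere plus $g$ annuli on $\partial\wheel$) shows that, after minimization of $\vert \partial Q\cap \partial\uniP\vert$, the three parallel boundary circles of $Q$ can be placed inside the $g$-punctured sphere $F$; the incompressibility of the annuli $A_1,\dots,A_g$ established in the excerpt via the non-separating spine $\Gamma^g$, combined with a minimal-intersection analysis analogous to Lemmas \ref{lm:arcs} and \ref{lm:two_p3_surfaces} applied to $Q$ against suitable meridian disks of $\wheel_Y$ dual to $\Gamma^g$, would then force $Q$ either to compress, to be $\partial$-parallel to $F$, or to be isotopic to some $P_i$ across a product region, each contradicting our hypotheses on $Q$. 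The trickiest subcase is this last one in $Y$, where ruling out the possibility that $Q$ envelops a nontrivial subwheel subset of the spokes requires careful bookkeeping of which circles of $\partial Q$ separate which punctures of $F$.
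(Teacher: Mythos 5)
Your case structure matches the paper's (check each $P_i$ is a $\p 3$-surface, count ends for fullness, then rule out an extra $\p 3$-surface $Q$ in some $X_i$ or in $Y$), and the fullness part is fine. But the two load-bearing steps are asserted rather than proved, and one assertion is wrong as stated. For incompressibility, ``both sides are topologically non-trivial'' does not imply incompressibility, and ``an incompatible grouping of the three parallel meridian disks'' is not an argument: an essential circle in a $3$-punctured sphere separates one boundary circle from the other two, and you must explain why no such circle bounds a disk on either side. The paper does this concretely: a compressing disk in $X_i$ would split the constituent link of $(\sphere,\Gamma_{X_i})$, which is non-split, while a compressing disk on the $Y$-side has boundary disjoint from the annuli $\mathcal{A}=A_1\cup\dots\cup A_g$, can be isotoped off $\mathcal{A}$, and then caps off to a $2$-sphere meeting the spine $\Gamma^g$ in a single point, which is impossible.

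For maximality your claim that, after minimization, the three boundary circles of $Q\subset Y$ ``can be placed inside the $g$-punctured sphere $S$'' is unjustified and in fact false in the relevant subcase: a surface parallel to $P_1$ necessarily has a boundary component in the annulus of $Y\cap\wheel$ adjacent to $P_1$, since the middle circle $C_2$ of $\partial P_1$ is only accessible from $Y$ through that annulus. The paper therefore splits into two genuinely different subcases. If $\partial Q\subset S$, isotoping $Q$ off $\mathcal{A}$ makes $Q$ a surface in the exterior of $\Gamma^g$ separating two rim vertices $v_i,v_j$ so that they can only be joined through the hub $v$, contradicting the $2$-connectivity of the wheel. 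If $\partial Q$ meets some annulus, say $A_1$, then $\partial Q$ is parallel to $\partial P_1$; arranging $Q\cap A_1$ to be a single arc cutting a disk $D$ off $A_1$, the frontier of a regular neighborhood of $D\cup Q\cup P_1$ is a $2$-decomposing surface with parallel boundary in $S$, which must be boundary-parallel because $\Gamma^g$ has no local knots, and this forces $Q$ to be parallel to $P_1$. Your ``minimal-intersection analysis against suitable meridian disks \dots\ would then force'' gestures at the first subcase but never addresses the second, which is where the actual work lies.
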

\begin{proof}
It is clear that components of $\partial P_i$ are mutually parallel in $\partial\HK$, and 
hence $P_i$ is non-boundary-parallel.
Suppose $P_i$ admits a compressing disk $D$. Then $D$ cannot be in $X_i$ since 
the constituent link of $(\sphere,\Gamma_{X_i})$ 
is non-split; see Fig.\ \ref{fig:GammaX}. This implies $\partial D$ is disjoint from $\mathcal{A}$, and hence $D$ can be isotoped away from $\mathcal{A}$.
Thereby $D$ induces a $2$-sphere meeting $\Gamma^g$ at one point, an impossibility. 
Therefore $\systemP$ is a full $\p 3$-system. 

To see $\systemP$ is maximal, observe
first that if there is another $\p 3$-surface $Q$ disjoint from $\uniP$, then 
$Q$ cannot be in $X_i$ since the constituent link of 
$(\sphere,\Gamma_{X_i})$ is non-split.
Whence $\partial Q$ either is in the $g$-punctured sphere $S$ in $Y\cap \HK$ or meet an annulus in $Y\cap \HK$.
In the former, $Q$ can be isotoped away from $\mathcal{A}$, yet this implies that some $v_i,v_j\in\Gamma^g$, $i\neq j$, can only be connected via paths passing through $v$, a contradiction. 
In the latter, it may be assumed that $Q$ meets $A_1$ and hence $\partial Q,\partial P$ are parallel. Thus one can isotope $Q$ so that $Q\cap A_1$ is an arc, which cuts off from $A_1$ a disk $D$. The frontier of a regular neighborhood $D\cup Q\cup P_1$ induces a $2$-decomposing surface $F$ 
with parallel boundary components in $S$. Since there is no local knot in $\Gamma^g$, $F$ is boundary-parallel, and this implies $Q$ and $P_1$ are parallel. 
\end{proof}  
 
\subsection{Surfaces in $\p 3$-decomposable handlebody-knot exteriors}
Here we show that topological features of a $\pair$ can be recovered to a considerable extent from the $\p 3$-decomposition.
In the following, the boundary components of a $\p 3$-surface $P$ are labeled as in the beginning of Section \ref{subsec:arcs}.

\begin{lemma}\label{lm:disks}  
If $\Compl\HK$ is $\partial$-reducible, then $\Compl\HK$ 
admits an essential disk disjoint from
$\uniP$.
\end{lemma}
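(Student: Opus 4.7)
The plan is to take any essential disk $D\subset\Compl\HK$ provided by $\partial$-reducibility, isotope $D$ within its isotopy class to minimize $|D\cap\uniP|$, and argue that this minimum must be zero. The ambient manifold $\Compl\HK$ is irreducible (as the exterior of a handlebody in $\sphere$), and every $P\in\systemP$ is incompressible.

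First I would eliminate all circles in $D\cap\uniP$ by the standard innermost-disk argument: an innermost circle $c$ on $D$ bounds a subdisk $D_0\subset D$ with $\mathring{D}_0\cap\uniP=\emptyset$; by incompressibility of $\uniP$, $c$ also bounds a disk $D_P$ on the component $P\in\systemP$ containing $c$; the sphere $D_0\cup D_P$ bounds a ball $B\subset\Compl\HK$ by irreducibility, and since $\partial B$ lies in the interior of $\Compl\HK$, any further component of $\uniP$ meeting $B$ would have to be closed, contradicting $\partial\uniP\subset\partial\HK$. Hence $B\cap\uniP=D_P$, and $D$ can be pushed across $B$ to delete $c$, contradicting minimality.

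Assume now $D\cap\uniP$ is a non-empty family of arcs and pick an outermost arc $\alpha\subset D\cap P$ cutting off $D_1\subset D$ with $\partial D_1=\alpha\cup\beta$, $\beta\subset\partial D\subset\partial\HK$, and $\mathring{D}_1\cap\uniP=\emptyset$. Let $X\subset\Compl\HK$ be the component cut off by $\uniP$ that contains $D_1$. If $\alpha$ is inessential on $P$, it cobounds a disk $D_P\subset P$ with an arc $\gamma\subset\partial P$, and pushing $D_1\cup_\alpha D_P$ slightly off $P$ into $X$ produces a disk $E^\ast\subset X$ disjoint from $\uniP$ with $\partial E^\ast\subset\partial\HK$. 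Either $\partial E^\ast$ is essential on $\partial\HK$ and $E^\ast$ is the sought essential disk, or $\partial E^\ast$ bounds a disk $F\subset\partial\HK$, in which case $E^\ast\cup F$ bounds a ball in $\Compl\HK$ through which $D$ may be isotoped to remove $\alpha$ from $D\cap\uniP$, again violating minimality.

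The essential-arc case is the main obstacle. If $\alpha$ is essential on $P$, then since $P$ is a $3$-punctured sphere, $\alpha$ must separate $P$ into two pieces whose Euler characteristics sum to zero, hence two annuli; $\partial$-compressing $P$ along $D_1$ therefore yields two annuli rather than a disk on the nose. To still produce an essential disk in $X$ disjoint from $\uniP$, I would exploit the $\p 3$-structure: the three components of $\partial P$ are mutually parallel in $\partial\HK$, and Theorem~\ref{teo:ends_connectors} describes $X\cap\partial\HK$ as a once-punctured closed surface glued to an annulus (if $X$ is an end), or as annuli attached to an $n$-punctured closed surface otherwise. A case analysis on the endpoints of $\alpha$ on $\partial P$ and on which piece of $X\cap\partial\HK$ contains $\beta$ shows that $D_1$ can be completed to an essential disk in $X$: either by capping $D_1$ with a spanning disk of an adjacent annulus of $X\cap\partial\HK$ bounded by two parallel $C_i$'s, or—when $\beta$ traces a boundary-parallel path—by recognizing a compressing disk for $P$ itself, contradicting incompressibility of $P$ as a $\p 3$-surface. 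Closing this bookkeeping, ensuring the resulting disk has boundary essential on $\partial\HK$ rather than boundary-parallel, is the delicate step the rest of the proof must handle.
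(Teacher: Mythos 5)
Your overall strategy coincides with the paper's: minimize $\vert D\cap\uniP\vert$, kill circles by an innermost-disk argument, then analyze an outermost arc $\alpha$ of $D\cap\uniP$ according to whether it is essential or inessential in the $\p 3$-surface $P$ it lies on. Your treatment of the circle case and of the inessential-arc case is essentially the paper's (your $E^\ast=D_1\cup D_P$ is the paper's $D'''$, and the dichotomy ``$\partial E^\ast$ essential in $\partial\HK$, hence done'' versus ``$E^\ast$ cuts off a ball, hence the minimality is violated'' is the same dichotomy, packaged slightly differently).

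The essential-arc case, however, is a genuine gap, and you acknowledge as much. Two concrete problems. First, your opening claim there is false: an essential arc in a $3$-punctured sphere need not separate it -- an arc joining two distinct components of $\partial P$ (a $(\mathtt{i},\mathtt{j})$-arc with $i\neq j$) is essential and non-separating, and cutting along it yields one annulus, not two. Second, and more importantly, your sketched case analysis (cap $D_1$ with a spanning disk of an adjacent annulus, or recognize a compressing disk of $P$) only covers the sub-cases the paper disposes of via Lemma~\ref{lm:arcs}\ref{itm:outermost} without using $\partial$-irreducibility, namely the $(\one,\two)$-, $(\two,\three)$-, $(\one,\three)$- and $\two$-type arcs. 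It does not address the remaining and decisive sub-case of a $\one$-arc or $\three$-arc, i.e.\ an essential arc with both endpoints on $C_1$ or on $C_3$; in that situation no essential disk disjoint from $\uniP$ is produced, and the paper instead derives a contradiction by counting: each component of $\partial P$ separates $\partial\HK$ and the three components are mutually parallel, so $\vert\partial D\cap\partial P\vert=6k$ and $\vert D\cap P\vert=3k\geq 3$, whereas Lemma~\ref{lm:arcs}\ref{itm:no_adjacent_parallel} forbids an outermost $\one$-arc from having an immediately parallel or boundary-adjacent companion, forcing $\vert D\cap P\vert=1$. Without this divisibility argument (or a substitute for it), your proof cannot close, since for a general incompressible surface a single essential outermost intersection arc with an essential disk is perfectly possible.
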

\begin{proof}
Choose an essential disk $D\subset\Compl\HK$ and 
choose $\uniP$ in its isotopy class so that they minimize $\vert D\cap \uniP\vert$.  
By the irreducibility of $\Compl\HK$ and the incompressibility of $\uniP$,
$D\cap \uniP$ contains no circles. 

Suppose $D\cap \uniP\neq\emptyset$.
Then there exists $P\in\systemP$ 
such that $D\cap P\neq\emptyset$.
Since components of $\partial P$ are mutually parallel, and each is separating in $\partial\HK$, we have $\vert\partial D\cap \partial P\vert=6k$ and hence $\vert D\cap P\vert=3k$, $k\in\mathbb{N}$.

Consider an outermost arc $\alpha$ in $\bd \alpha_D$, and let $D'\subset D$ be the disk it cuts off.
It may be assumed that $\alpha$ is an $\one$-arc or $\one^\star$-arc. If it is the former, then 
$D\cap P$ contains only $\alpha$ by Lemma \ref{lm:arcs}\ref{itm:no_adjacent_parallel}, contradicting $\vert D\cap P\vert\geq 3$.
If $\alpha$ is an $\one^\star$-arc, then 
$\alpha$ cuts off from $P$ a disk $D''$.
Pushing $D''\cup D'$ away from $P$ gives us a 
disk $D'''\subset\Compl\HK$. 
The disk $D'''$ cannot be essential since $\vert D'''\cap P\vert<\vert D\cap P\vert$, and hence $D'''$ cuts a $3$-ball $B$ off from $\Compl\HK$. 
If $\mathring{B}\cap\uniP=\emptyset$, 
then isotoping $P$ through $B$ reduces $\vert D\cap\uniP\vert$, a contradiction. If $\mathring{B}\cap\uniP\neq\emptyset$, then there is a arc $\beta\subset D\cap\uniP$ that cuts off a disk $D'_0$ from $D'$ with $\mathring{D'_0}\cap\uniP=\emptyset$. Suppose
$\beta$ is in $P'\subset\uniP$. Then $D'_0$ and the disk $D''_0\subset P'$ cuts off by $\beta$ bound a $3$-ball $B'$ with $\mathring{B'}\cap \uniP=\emptyset$. Isotopying $P'$ through $B'$ decreases $\vert D\cap\uniP\vert$, a contradiction.
Therefore, $D\cap P=\emptyset$.  
\end{proof}
 
\begin{corollary}\label{cor:genus_two_b_irre}
If $g=2$, then 
$\Compl\HK$ is $\partial$-irreducible.
\end{corollary}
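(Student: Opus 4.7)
The plan is to argue by contradiction. Suppose $\Compl\HK$ is $\partial$-reducible. By Lemma \ref{lm:disks}, there is an essential disk $D\subset\Compl\HK$ disjoint from $\uniP$. Let $X$ be the component of $\Compl\HK\setminus\uniP$ containing $D$, and pick $P\in\systemP$ with $P\subset\partial_f X$.

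First I would exploit the genus-$2$ constraint. From the identity $\sum_{i=1}^e g_i+g_0=g=2$ with $g_i\ge 1$ at every end, together with the observation that every $\p 3$-surface separates $\Compl\HK$, so the decomposition graph is a tree with at least two leaves, the decomposition has exactly two ends, each contributing $g_i=1$; by Theorem \ref{teo:ends_connectors}, every non-end component has only annular intersection with $\partial\HK$. In particular, $X\cap\partial\HK$ is either a union of annuli (when $X$ is a non-end) or a once-punctured torus $F$ together with an annulus (when $X$ is an end). Essentiality of $\partial D$ in $\partial\HK$ then forces $\partial D$ either to be parallel to some boundary circle $C_i\subset\partial P'$ of an adjacent $P'\in\systemP$, or to be a non-peripheral simple closed curve in the once-punctured torus $F$.

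In the former situation, $\partial D$ bounds a disk $E'\subset\HK$ parallel to the disk cut off by $C_i$ in $\HK$; since $g=2$, $E'$ separates $\HK$ into two solid tori, so $S':=D\cup E'$ is a $\p 1$-decomposing sphere, presenting $\pair$ as a non-trivial $\p 1$-connected sum $K_1\#_1 K_2$. I would then show this is incompatible with $\pair$ being $\p 3$-decomposable: after isotoping $P$ to be disjoint from $S'$ (via innermost-disk arguments on the sphere $S'$, using the incompressibility of $P$), the surface $P$ sits in one of the two $3$-balls cut off by $S'$, with $\partial P$ three parallel essential curves on a once-punctured torus; each of the two possible isotopy types (peripheral in the once-punctured torus, or parallel to a primitive curve on the torus) forces $P$ to be either compressible or $\partial$-parallel in $\Compl\HK$, contradicting $P$ being a $\p 3$-surface.

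In the latter situation, the component $V_F\subset\HK$ glued to $F$ is a solid torus, so it admits a meridian disk $D_v$ with $\partial D_v\subset F$. Minimising the geometric intersection of $\partial D$ and $\partial D_v$ on $F$, either they are disjoint — whence they are isotopic on the torus $\partial V_F$, forcing $\partial D$ itself to bound a meridian disk of $V_F$ and reducing to the previous case via the resulting $2$-sphere $D\cup E_v$ — or they intersect transversely, in which case banding $D$ to $D_v$ along an intersection point and taking the frontier of a regular neighbourhood in $\Compl\HK$ produces a compressing disk of $P$, contradicting the incompressibility of $P$. The main obstacle is the first situation, where the case analysis of the three parallel boundary circles of $P$ on the once-punctured torus must be carried out carefully to produce the compression or the $\partial$-parallelism that contradicts the $\p 3$-surface property.
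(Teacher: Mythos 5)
Your overall strategy matches the paper's: invoke Lemma \ref{lm:disks} to get an essential disk $D$ disjoint from $\uniP$, rule out $\partial D$ lying in an annulus component of $\partial\HK-\partial\uniP$, and then work with a non-peripheral $\partial D$ in the once-punctured torus $F$ of an end together with a meridian of the adjacent solid torus. However, there is a genuine gap at the decisive step. Your tubing/banding construction produces a compressing disk of $P$ only when $\partial D$ meets $\partial D_v$ in \emph{exactly one} point: if $\vert\partial D\cap\partial D_v\vert\geq 2$, the frontier of a regular neighbourhood of $\partial D_v\cup D$ is not a disk (and ``banding along one intersection point'' leaves the remaining intersection points unresolved, so it does not yield an embedded disk either). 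Minimising geometric intersection does not reduce you to the cases $0$ or $1$: two non-peripheral simple closed curves in a once-punctured torus can realise any intersection number. The missing ingredient is that $D$ is a disk in the exterior of the solid torus $V_F$ with $\partial D$ essential on $\partial V_F$; hence $[\partial D]=0$ in $H_1(\Compl{V_F})$, which forces $\partial D$ to be a preferred longitude of $V_F$ (and $V_F$ to be unknotted), and only then does $\vert\partial D\cap\partial D_v\vert=1$ hold. This is exactly the point the paper's proof relies on when it asserts that $D$ meets the boundary $m$ of a meridian disk at one point.

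Two further remarks. First, your ``disjoint'' subcase does not reduce to the earlier case: there $\partial D$ would bound a \emph{non-separating} disk of $\HK$, and the resulting $2$-sphere would meet the connected handlebody $\HK$ in a single non-separating disk, which is impossible in $\sphere$ (the complement of the disk in $\HK$ is connected but would have to lie in one complementary ball while the disk has collar directions on both sides); so this subcase is vacuous rather than a reduction. Second, your ``former situation'' ($\partial D$ parallel to a boundary circle $C_i$ of some $P'\in\systemP$) admits a one-line resolution that the paper uses implicitly: $D$ together with the annulus in $\partial\HK$ between $\partial D$ and $C_i$ is, after a small isotopy pushing its boundary into the interior of $P'$, a compressing disk of $P'$ (its boundary is parallel to a boundary component of a thrice-punctured sphere, hence essential in $P'$), contradicting incompressibility. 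The detour through $\p 1$-decomposing spheres, whose case analysis you explicitly defer, is therefore unnecessary and, as written, leaves that branch of the argument incomplete as well.
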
 
\begin{proof}
Since $g=2$, every $\uniP$ cuts 
$\partial\HK$ into some annuli and two once-punctured tori.
If $\Compl\HK$ is $\partial$-reducible, 
then by Lemma \ref{lm:disks}, there exists an essential disk $D$ disjoint from $\uniP$. 
In particular, $D$ is in an end $X$ and 
$\partial D$ is in the once-punctured torus $F\subset X\cap \partial\HK$. Denote by $V\subset\HK$ the solid torus cut off 
by a disk bounded by $\partial F$, and let $P\subset\uniP$ be the component meeting $X$. Then $D$ meets at one point the boundary $m$ of an essential disk of $V$ disjoint from $\partial F$, and the frontier of a regular neighborhood of $m\cup D$ in $X$ induces a compressing disk of $P$, a contradiction. 
\end{proof}


\begin{lemma}\label{lm:annuli} 
Suppose $\Compl\HK$ is $\partial$-irreducible, if $\Compl\HK$ admits an essential annulus (resp.\ a $2$-decomposing surface), then there exists an essential annulus (resp.\ $2$-decomposing surface) disjoint from $\uniP$. 
\end{lemma}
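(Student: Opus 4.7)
My plan is to adapt the strategy of Lemma \ref{lm:disks}, now exploiting the $\partial$-irreducibility of $\Compl\HK$ (the hypothesis needed to activate Lemma \ref{lm:arcs}\ref{itm:outermost}) in place of the $\partial$-reducibility used there. Fix an essential annulus $A$ (respectively, a $2$-decomposing surface $S$) in $\Compl\HK$, and isotope it together with $\uniP$ in their isotopy classes so as to minimize $|A\cap \uniP|$ (respectively, $|S\cap \uniP|$), breaking ties lexicographically by counting circles first and then arcs. The goal is to show the minimum is zero.

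I would first eliminate circle intersections. Inessential circles in $A$ (or $S$) are removed by the standard innermost-disk surgery: such a circle bounds a disk in $A$ and, by the incompressibility of $\uniP$ together with the irreducibility of $\Compl\HK$, pairs with a disk in $\uniP$ to bound a $3$-ball, after which the argument of Lemma \ref{lm:disks} reduces the intersection. Because a twice-punctured sphere carries no essential simple closed curves, this already settles the $S$ case. For $A$, the remaining case is a circle $\gamma$ essential in $A$; if $\gamma$ is inessential in $\uniP$ the symmetric surgery using incompressibility of $A$ reduces the intersection, and if $\gamma$ is essential in both it is parallel in $A$ to a component of $\partial A$ and parallel in some $P\in\systemP$ to a boundary component $C_j$. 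Choosing $\gamma$ innermost in $P$, the cobounding subannuli $A_A\subset A$ and $A_P\subset P$ glue to an annulus $A''\subset \Compl\HK$ with $\partial A''\subset \partial\HK$. If $A''$ is compressible, any compressing disk has boundary essential in $\partial\HK$, contradicting $\partial$-irreducibility; otherwise, swapping $A_A$ for a parallel push-off of $A_P$ off $P$ produces an embedded annulus $A^\ast$ with strictly fewer circle intersections, which, using $\partial$-irreducibility to exclude new compressions and $\partial$-compressions, is essential and contradicts the minimality.

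Arc intersections are then handled by the combinatorics already developed in Section \ref{subsec:arcs}. By Lemma \ref{lm:arcs}\ref{itm:outermost} every outermost arc in $\bd\alpha_A$ (or $\bd\alpha_S$) is a $\one$-arc or a $\three$-arc; by Lemma \ref{lm:arcs}\ref{itm:one_three_exclusion} we may assume only $\one$-arcs appear; and by Lemma \ref{lm:arcs}\ref{itm:no_adjacent_parallel} no two such can be immediately parallel or boundary-adjacent. Tracking the cyclic labeling of $\partial A\cap \partial\uniP$ (respectively, $\partial S\cap \partial\uniP$) induced by the parallelism of $C_1,C_2,C_3$ in $\partial\HK$, exactly as in the pattern analysis of Lemma \ref{lm:two_p3_surfaces}, the remaining configurations either introduce a separating $\mathtt{i}^\ast$-arc (contradicting the setup as in the proof of Lemma \ref{lm:arcs}\ref{itm:outermost}) or force an immediately parallel pair of $(\one,\two)$- or $(\two,\three)$-arcs, contradicting Lemma \ref{lm:arcs}\ref{itm:not_parallel_types}. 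Hence the intersection is empty, as required.

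The main obstacle, as I see it, is verifying that the band-swapped annulus $A^\ast$ in the essential-circles subcase is essential and not merely incompressible: ruling out new boundary-compressing disks introduced by the push-off likely requires a secondary minimality argument on $A^\ast$ or a direct invocation of $\partial$-irreducibility to block such disks. The arc-combinatorial step, while conceptually a direct generalization of Lemma \ref{lm:two_p3_surfaces}, will need the counting redone with $A$ and $S$ having two boundary components rather than the three parallel ones available on a $\p 3$-surface, so that a short but careful case analysis of the cyclic labeling patterns around each boundary component is required.
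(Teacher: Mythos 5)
Your overall strategy---minimize an intersection complexity, kill circles and arcs using the configuration lemmas of Section \ref{subsec:arcs}, and finish with an annulus swap---is the same as the paper's, but three points in your setup are genuine problems rather than omitted details. First, a $2$-decomposing surface is a twice-punctured sphere, i.e.\ an \emph{annulus}, and its core is an essential simple closed curve in it; your claim that ``a twice-punctured sphere carries no essential simple closed curves'' is false, so the $S$ case is not settled at the circle stage. (The fix is harmless: the essential-circle argument for $A$ applies verbatim to $S$, and the paper treats the two cases uniformly.) Second, your complexity counts circles before arcs, but Lemma \ref{lm:arcs} is proved under the standing hypothesis that the two surfaces minimize the partial index $\pind$, which is lexicographic with \emph{arcs first}---precisely because some of the moves in its proof (e.g.\ the elimination of a $\two$-arc) trade an arc for a circle. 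Under your ordering such a move does not decrease complexity, so the minimality you impose does not entitle you to quote Lemma \ref{lm:arcs}. The paper instead minimizes the total index $(\bn_\partial,\bn_\circ)$ of $A$ against the whole union $\uniP$ and invokes Lemma \ref{lm:mini_index} to obtain the pairwise partial-index minimality that the arc lemmas require. Third, you minimize within the isotopy classes of $A$ and $\uniP$, but the surface $A^\ast$ produced by your swap (and likewise the paper's $A'\cup A''$) is in general \emph{not} isotopic to $A$, so under your setup it contradicts nothing; the minimization must be taken over all essential annuli (resp.\ all $2$-decomposing surfaces), as in Lemma \ref{lm:tori}.

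Once these are corrected, your arc analysis points in the right direction but still needs the actual content of the paper's argument: the paper first excludes inessential arcs by looking at an outermost $\one$-arc together with the three arcs adjacent to it along a component of $\partial A$, playing the pattern \eqref{eq:pattern} against the $\partial$-incompressibility of $A$ (this is where the $\partial$-irreducibility of $\Compl\HK$ enters); it then excludes essential arcs by noting that adjacent essential arcs in an annulus are immediately parallel and running the labeling case analysis against $\partial$-incompressibility and Lemma \ref{lm:arcs}\ref{itm:not_parallel_types}. Your worry about the essentiality of $A^\ast$ is legitimate but resolvable in the standard way: if the resulting annulus were compressible one contradicts $\partial$-irreducibility or incompressibility of $P$, and if it were boundary-parallel one obtains an isotopy of $A$ reducing the intersection, again violating minimality.
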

\begin{proof}
\cout{
Observe first that compressing a $2$-decomposing surface along a $\partial$-compressing disk induces an essential disk in $\Compl\HK$, so the $\partial$-irreducibility of $\Compl\HK$ implies that every $2$-decomposing surface is $\partial$-incompressible, and hence essential. 
Choose a $2$-decomposing surface $A$ and choose $\uniP$ in its isotopy class so that the total index is minimized.  
}
Note first that by the $\partial$-irreducibility, every incompressible, non-boundary parallel annulus in $\Compl\HK$ is $\partial$-incompressible and hence essential.
Choose an essential annulus (resp.\ $2$-decomposing surface) $A$ and choose $\uniP$ in its isotopy class so that their total index is minimized.   
%
Then, by Lemma \ref{lm:mini_index}, for any $P\in \systemP$, $A,P$ minimizes the partial index $\pind$ in their isotopy classes. In particular, $n_\otimes=0$ by Lemma \ref{lm:two_incompressible}. 
If there exists an inessential arc $\alpha_0$ in $\alphaA$.
Then by Lemma \ref{lm:arcs}\ref{itm:outermost}\ref{itm:no_adjacent_parallel}, $\alpha_0$ is outermost.    
Denote by $C_1',C_2'$ the components of $\partial A$. Then it may be assumed that $\alpha_0$ is a $\one$-arc with $\partial \alpha_0\subset C_1'$. Therefore there are
three essential arcs $\alpha_1,\alpha_2,\alpha_3$ in $\alphaA$ 
such that $\alpha_i\cap C_1'$ is adjacent to 
$\alpha_{i+1}\cap C_1'$, $i=0,1,2$. In particular, $\alpha_1\cap C_1'$ is labeled with $2$. Note that $\alpha_1\cap C_2'$ cannot be labeled with $2$ or $3$ since $A$ is $\partial$-incompressible and $\alpha_0$ is a $\one$-arc. If $\alpha_1\cap C_2'$ is labeled with $\one$, then either $\alpha_2$ or $\alpha_3$ is a $(\two,\three)$-arc, again contracting $\alpha_0$ is a $\one$-arc. Therefore, $\alphaA$ contains only essential arcs.

Let $\alpha_1,\alpha_2$ (resp.\  $\alpha_2,\alpha_3$) are immediately parallel essential arcs in $\alphaA$. 
Since the labeling on $C_1',C_2'$ follow the pattern in \eqref{eq:pattern}, it may be assumed that $\alpha_i\cap C_1'$ is labeled with 
$i$, $i=1,2,3$. If $\alpha_1$ is a $\one$-arc, then either 
$\alpha_2$ is a $\two^\ast$-arc or $\alpha_3$ is a $(\two,\three)$-arc. The former contradicts $A$ being $\partial$-incompressible, while the latter contradicts $\alpha_1$ being a $\one$-arc.
Similarly, if $\alpha_1$ is a $(\one,\two)$-arc, 
then either $\alpha_3$ is a $\three^\ast$-arc or 
$\alpha_2$ is a $(\one,\two)$-arc; the former cannot happen by the $\partial$-incompressibility of $A$, while the letter contradicts Lemma \ref{lm:arcs}\ref{itm:not_parallel_types}. 
Lastly, if $\alpha_1$ is a $(\one,\three)$-arc, 
then either $\alpha_2,\alpha_3$ both are $(\two,\three)$-arcs or $\alpha_2$ is a $\two^\ast$-arc, yet neither happens as in the previous case. As a result, $A\cap P$ contains only essential loops.

If $A\cap P\neq \emptyset$, then there exists an outermost loop in $\alphaA$, which cuts off an outermost annulus $A'$ from $A$ and an annulus $A''$ from $P$. 
Then the union $A'\cup A''$ induces 
an incompressible annulus (resp.\ $2$-decomposing sphere) 
with smaller total index with $\uniP$ than $A$ does, a contradiction.
Thus $A\cap P=\emptyset$, for every $P\in\systemP$, and hence $A\cap\uniP=\emptyset$.  
\end{proof}

\begin{lemma}\label{lm:tori} 
If $\Compl\HK$ contains an incompressible torus, then there exists an incompressible torus in $\Compl\HK$ disjoint from
$\uniP$.
\end{lemma}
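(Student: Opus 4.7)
My strategy follows the template of Lemmas \ref{lm:disks} and \ref{lm:annuli}. I choose $T$ and $\uniP$ in their isotopy classes so as to minimize $\bdtind$; since $T$ is closed this just means minimizing $\bn_\circ$. By Lemma \ref{lm:mini_index} each pair $(T,P)$ with $P\in\systemP$ then minimizes the partial index $\pind$, and Lemma \ref{lm:two_incompressible} forces $n_\otimes=0$, so every component of $T\cap\uniP$ is essential in both $T$ and in its containing $\p 3$-surface.

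Assume for contradiction that $T\cap\uniP\neq\emptyset$. Since disjoint essential simple closed curves on a torus are isotopic, the circles of $T\cap\uniP$ lie in a single parallel class on $T$; and on each three-punctured sphere $P$, every circle of $T\cap P$ is parallel to some boundary component $C_i\subset\partial P$, as essential simple closed curves on a three-punctured sphere are exactly those parallel to the boundary. I pick $\alpha\in T\cap P$ innermost in $P$ among circles parallel to a fixed $C_i$: it cobounds with $C_i$ an annulus $A''\subset P$ whose interior is disjoint from $T$ and from $\uniP$ (using $A''\subset P$ and that the $\p 3$-surfaces are pairwise disjoint).

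I now perform an annular surgery on $T$ using $A''$. Let $N(A'')$ be a thin regular neighborhood of $A''$ in $\Compl\HK$; its frontier decomposes into an annular neighborhood $N_T(\alpha)\subset T$ of $\alpha$, two parallel push-offs of $A''$ into $\Compl\HK$, and a thin annulus along a collar of $C_i$ in $\Compl\HK$. Replacing $N_T(\alpha)$ in $T$ by the union of the remaining three annuli yields an embedded closed surface $T^*$; an Euler-characteristic count (annulus $\cup$ annulus glued along two circles) shows $T^*$ is again a torus, and by the clean choice of $A''$ one has $\vert T^*\cap\uniP\vert<\vert T\cap\uniP\vert$.

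The hard part will be verifying $T^*$ remains incompressible in $\Compl\HK$. I would put any compressing disk $D^*$ of $T^*$ in general position with $T$, $\uniP$ and $A''$; innermost-circle and innermost-arc arguments on $D^*\cap(T\cup\uniP)$, using the incompressibility of $T$ and of each $\p 3$-surface together with the irreducibility of $\Compl\HK$, should either convert $D^*$ into a compressing disk of $T$ itself (contradicting the incompressibility of $T$) or produce a further local reduction of $\vert T\cap\uniP\vert$. In either case the minimality of $\bdtind$ is violated, forcing $T\cap\uniP=\emptyset$.
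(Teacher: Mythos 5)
Your setup — minimizing the intersection with $\uniP$, using incompressibility and irreducibility to make every circle of $T\cap\uniP$ essential in both $T$ and the relevant $\p 3$-surface, hence parallel in $P$ to some $C_i$ — agrees with the paper. The gap is the annular surgery. Each $C_i$ separates $\partial\HK$, so $P$ separates $\Compl\HK$ into two components $X$ and $Y$, and the two push-offs $A''_{\pm}$ of $A''$ lie on \emph{opposite} sides of $P$. Any ``thin annulus along a collar of $C_i$'' joining their ends near $\partial\HK$ must therefore cross $P$ in at least one circle parallel to $C_i$ (an arc of the cap runs from $X$ to $Y$). Consequently $\vert T^*\cap\uniP\vert\geq\vert T\cap\uniP\vert$: the surgery trades $\alpha$ for a new intersection circle near $C_i$ and gains nothing. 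The move you are reaching for — sliding an intersection curve off through $\partial P$ — is available only when the surface being isotoped itself has boundary on $\partial\HK$; for a closed torus it is not an intersection-reducing move. The unverified incompressibility of $T^*$ is a second, acknowledged, gap, but the argument already fails before that point.

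The paper argues instead on the annuli into which $P$ cuts $T$. If some such annulus $A\subset T$ has both boundary circles parallel in $P$ to the same $C_i$, they cobound an annulus $A'\subset P$, and $A\cup A'$ is a torus meeting $\uniP$ strictly fewer times than $T$; by minimality it cannot be incompressible, so it bounds a solid torus $V$, and $T$ is isotoped across $V$ to remove $\partial A$ (with an outermost-annulus argument if $T$ re-enters $V$), contradicting minimality. Once no such annulus exists, a labelling argument shows that an annulus of $T$ cut off by $P$ and lying in $X$ (resp.\ $Y$) must join a $C_2$-parallel circle to a $C_3$-parallel one (resp.\ $C_1$ to $C_2$), and the annulus adjacent to it on the other side of $P$ is then forced to have both boundary circles parallel to the same $C_i$ — the case already excluded. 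If you want to salvage your write-up, replace the $A''$-surgery with this two-case analysis of the annuli of $T$ cut off by $P$.
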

\begin{proof}
Let $T$ be an incompressible torus that 
minimizes $\vert T\cap \uniP\vert$ among 
all incompressible tori in $\Compl\HK$. 
If $T\cap\uniP\neq 0$, then 
there exists $P\in \systemP$ such that 
$T\cap P\neq\emptyset$. 
Label the components of $\partial P$ and the two components cut off by $P$ from $\Compl\HK$ as in Fig.\ \ref{fig:P_cuts_surface}. 
By the incompressibility of $P$ and $T$, $P$ cuts $T$ into some annuli in $X,Y$, and every boundary component of the annuli is parallel to $C_1,C_2$ or $C_3$. 

Let $A\subset T$ be an annulus $A\subset T$ cut off by $P$. Consider first the case $\partial A$ is parallel in $P$. This implies $\partial A$ cut an annulus $A'$ off from $P$. 
The union $A'\cup A$
induces a torus $T'$, and by the minimality, $T'$ bounds a solid torus $V$ in $X$ or $Y$. If $T\cap \mathring{V}=\emptyset$, 
then $T$ can be isotoped so $\partial A$ 
is removed from the intersection, contradicting the minimality. 
If $T\cap \mathring{V}\neq\emptyset$, 
then let $A''\subset T\cap V$ an outermost annulus in $V$ that cuts off a solid torus $W$ disjoint from $A$.
Compressing $A''$ through $W$ yields an isotope that decreases $T\cap \uniP$, 
a contradiction. Therefore, every annulus $A\subset T$ cut off by $P$ has boundary components non-parallel in $P$.  
This implies if $A\subset X$ (resp.\ $Y$), then the components of $\partial A$ are parallel to $C_2,C_3$ (resp.\ $C_1,C_2$), respectively. However, one of the annuli in $T$ cut off by $P$ that are adjacent to $A$ has both boundary components parallel to $C_3$ (resp.\ $C_1$) in $P$, a contradiction. 
\end{proof}
 
\subsubsection*{Example}
Recall the $\p 3$-decomposable genus $g>2$ handlebody-knot $\pairwheel$ and the maximal $\p 3$-system $\systemP$ 
of $\pairwheel$ in Fig.\ \ref{fig:pairwheel}. 
We use the same notation as in Lemma \ref{lm:example_p3_system}. 

\begin{lemma}\label{lm:example_birre_atoro_twoindecom}
The exterior $\Compl{\wheel}$ is $\partial$-irreducible and atoroidal, and $\pairwheel$ is $2$-indecomposable.
\end{lemma}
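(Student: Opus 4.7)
The plan is to use the three topological reductions established earlier in this section, namely Lemmas \ref{lm:disks}, \ref{lm:annuli}, and \ref{lm:tori}, to reduce each of the three assertions to an analysis of the components cut off by $\uniP$, and then to exploit the very explicit structure of $(\sphere,\sgX)$ and of the wheel-like spine $\Gamma^g$ of $\wheel_Y \cup \rnbhd{\mathcal{A}}$ that was already set up in the proof of Lemma \ref{lm:example_p3_system}.

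First I would establish $\partial$-irreducibility. Suppose, for a contradiction, that $\Compl{\wheel}$ admits an essential disk. By Lemma \ref{lm:disks} we may isotope it to a disk $D$ disjoint from $\uniP$, so $D$ lies entirely in some end $X_i$ or in the central component $Y$. If $D \subset X_i$, then $D$ is an essential disk in the exterior of the induced handlebody-knot $(\sphere,\hkX)$, contradicting the non-splittability of the constituent link of $(\sphere,\sgX)$ used in Lemma \ref{lm:example_p3_system}. If $D \subset Y$, then the same arc/disk surgery argument against $\mathcal{A}$ that appears in Lemma \ref{lm:example_p3_system} lets one push $D$ off $\mathcal{A}$ and thereby produce a $2$-sphere meeting $\Gamma^g$ in a single point, which is impossible.

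For atoroidality the strategy is parallel. An incompressible torus $T \subset \Compl{\wheel}$ can, by Lemma \ref{lm:tori}, be chosen disjoint from $\uniP$, so $T$ lies in some $X_i$ or in $Y$. Inside an end $X_i$, the torus $T$ would give an incompressible torus in the exterior of $(\sphere,\sgX)$, ruled out by the explicit structure of $\sgX$ displayed in Fig.\ \ref{fig:GammaX}. Inside $Y$, one pushes $T$ off $\mathcal{A}$ via outermost disks to obtain an incompressible torus in the exterior of $\Gamma^g$, which is atoroidal by inspection of the wheel graph. For $2$-indecomposability, we may now invoke Lemma \ref{lm:annuli} using the $\partial$-irreducibility established in the first step: any $2$-decomposing surface of $\pairwheel$ is an annulus in $\Compl{\wheel}$, and can be isotoped to be disjoint from $\uniP$, so it lies in some $X_i$ or in $Y$. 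In both cases the non-splittability of $(\sphere,\sgX)$ and the absence of local knots along $\Gamma^g$ produce a contradiction, essentially by the same final paragraph as in the proof of Lemma \ref{lm:example_p3_system}, where a $2$-decomposing surface with parallel boundary in the $g$-punctured sphere $S$ was shown to be boundary-parallel.

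The main obstacle will be the case analysis inside $Y$: since $Y$ meets all $g$ of the $\p 3$-surfaces and contains both the $g$-punctured sphere $S$ and the annuli $Y \cap \mathcal{A}$, one must carefully translate essential disks, incompressible tori, and $2$-decomposing annuli in $Y$ into statements about the wheel spine $\Gamma^g$, and then verify that $\Gamma^g$ is non-split, atoroidal, and free of local knots. Once this translation is set up as in Lemma \ref{lm:example_p3_system}, the three conclusions follow in parallel.
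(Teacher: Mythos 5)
Your proposal follows essentially the same route as the paper: each of the three properties is reduced via Lemmas \ref{lm:disks}, \ref{lm:tori} and \ref{lm:annuli} to a surface disjoint from $\uniP$, which is then excluded using the non-split constituent links of $(\sphere,\Gamma_{X_i})$ and the connectivity of the wheel spine $\Gamma^g$ (the paper likewise orders the argument so that $\partial$-irreducibility feeds into Lemma \ref{lm:annuli} and atoroidality constrains the boundary of the $2$-decomposing surface). One small correction to your sketch of the torus case: after minimizing $\vert T\cap\mathcal{A}\vert$ the intersection curves are essential in both surfaces, so $\mathcal{A}$ cuts $T$ into annuli rather than leaving outermost disks to push across; the paper disposes of these annuli by noting that each would force two edges of $\Gamma^g$ whose deletion disconnects the wheel, and then concludes because $\Compl{\Gamma^g}$ is a handlebody.
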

\begin{proof}
Suppose $\Compl\wheel$ is not $\partial$-irreducible. Then by Lemma \ref{lm:disks}, there exists an essential $D\subset\Compl\wheel$ disjoint from $\uniP$. The disk $D$ cannot be in $X_i$ since the constituent link of $(\sphere,\Gamma_{X_i})$ is non-split; see Fig.\ \ref{fig:GammaX}. 
Therefore, $\partial D$ is in the $g$-punctured sphere of $Y\cap\wheel$. 
In particular, $D$ can be isotoped away from $\mathcal{A}:=A_1\cup\dots\cup A_g$, yet this implies $\Gamma^g-v$ is disconnected, a contradiction.

Suppose $\Compl\wheel$ is toroidal. 
Then by Lemma \ref{lm:tori}, there exists an incompressible torus $T$ disjoint from $\uniP$. The torus $T$ cannot be in $X_i$ since $X_i$ is a handlebody, so $T\subset Y$. One can isotope $T$ so that $\vert T\cap \mathcal{A}\vert$ is minimized. If $T\cap \mathcal{A}\neq\emptyset$, 
then $\mathcal{A}$ cuts $T$ into some 
annuli. Let $A'$ be one of the annuli, and observe that $\partial A'$ meets different components of $\mathcal{A}$ by the minimality. In particular, components of $\partial A'$ bound disks dual to different edges in $\Gamma^g$. This implies that cutting two edges of $\Gamma^g$ disconnects $\Gamma^g$, a contradiction. As a result $T$ is disjoint from $\mathcal{A}$, and hence 
in the exterior of $\Gamma^g$, which is not possible, given $\Compl{\Gamma^g}$ is a handlebody.

Suppose $\pairwheel$ is $2$-decomposable.
Then by Lemma \ref{lm:annuli}, there exists a $2$-decomposing surface $F$ disjoint from $\uniP$. By the atoroidality of $\pairwheel$, 
components of $\partial F$ are not parallel in $\partial \wheel$, so $\partial F$ 
is in the $g$-punctured sphere
$S$ in $Y\cap \wheel$. If the two
components of $\partial F$ are parallel to two components of $\partial S$, 
then cutting two edges adjacent to $v$ disconnects $\Gamma_g$, which is not possible.
If only one component of $\partial F$ parallel to a component of $\partial S$, 
then removing $v$ and one adjacent edge disconnects $\Gamma_g$, again an impossibility. 
Lastly if no component of $\partial F$ 
parallel to a component of $\partial S$, 
then removing $v$ disconnects $\Gamma_g$, 
yet this happens neither.    
\end{proof}

\subsubsection{Hyperbolic handlebody-knots with homeomorphic exteriors}\label{subsubsec:hyp}  
The handlebody-knot $\hpairn$ in Fig.\ \ref{fig:hyp_n} is obtained by twisting the twice-punctured disk $S$ in the exterior of $(\sphere,H)$ in Fig.\ \ref{fig:hyp_zero} $n$ times (see Fig.\ \ref{fig:signs} for the convention), $n\in\mathbb{Z}$, and hence its exterior is homeomorphic to $\Compl H$, for every $n$. 

It is not difficult to see the $3$-punctured sphere $P$ in Fig.\ \ref{fig:hyp_n} is a $\p 3$-surface in $\Compl {H_n}$, and no incompressible torus or essential annuli in $\Compl {H_n}$ disjoint from $P$. Thus $\hpairn$, $n\in\mathbb{Z}$, are all hyperbolic by Corollary \ref{cor:genus_two_b_irre} and Lemmas \ref{lm:annuli} and \ref{lm:tori}.

To see members in $\{\hpairn\}_n$ are mutually inequivalent, we observe first $\{P\}$ is a maximal $\p 3$-system. Denote by $L_n$ the constituent links of the induced spatial graph $(\sphere, \Gamma_n)$ given by $(\sphere,H_n^X)$ and $P$ (see Fig.\ \ref{fig:induced_sg_link}), and note that the Alexander polynomial of $L_n$ is $(n+1)t-2n+(n+1)t^{-1}$. Comparing the Alexander polynomials, we see $\hpairn,\hpairm$ and hence $L_n,L_m$ are equivalent if and only if $n=m$. 

\begin{figure}[t]
	\begin{subfigure}[b]{.31\linewidth}
		\centering
		\begin{overpic}[scale=.15,percent]{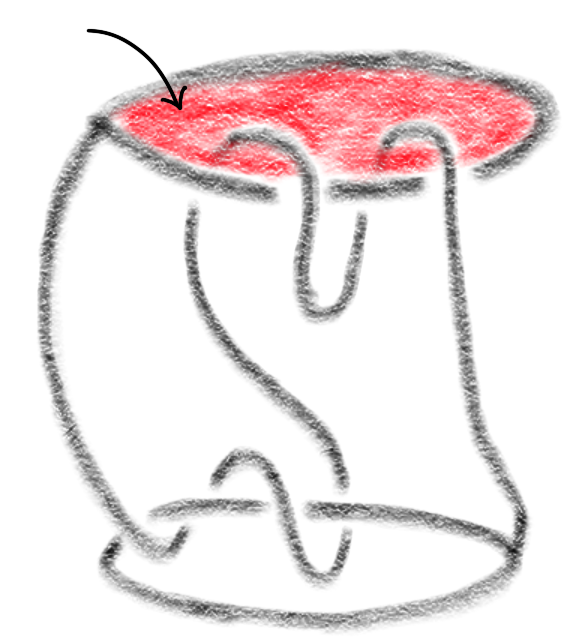}
		\put(2,92){\large $S$}
		\end{overpic}
		\caption{$(\sphere,H)$}
		\label{fig:hyp_zero}
	\end{subfigure}
	\begin{subfigure}[b]{.32\linewidth}
		\centering
		\begin{overpic}[scale=.15,percent]{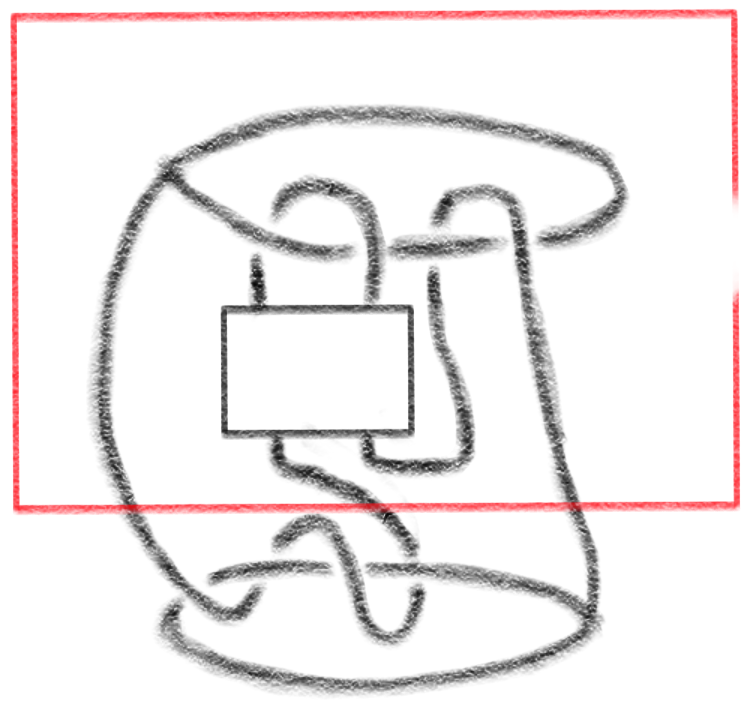}
			\put(95,57){\large $P$}
			\put(40,43){\Large $n$}
			\put(88,10){\large $X$}
		\end{overpic}
		\caption{$\hpairn$}
		\label{fig:hyp_n}
	\end{subfigure}
	\begin{subfigure}[b]{.32\linewidth}
		\centering
		\begin{overpic}[scale=.13,percent]{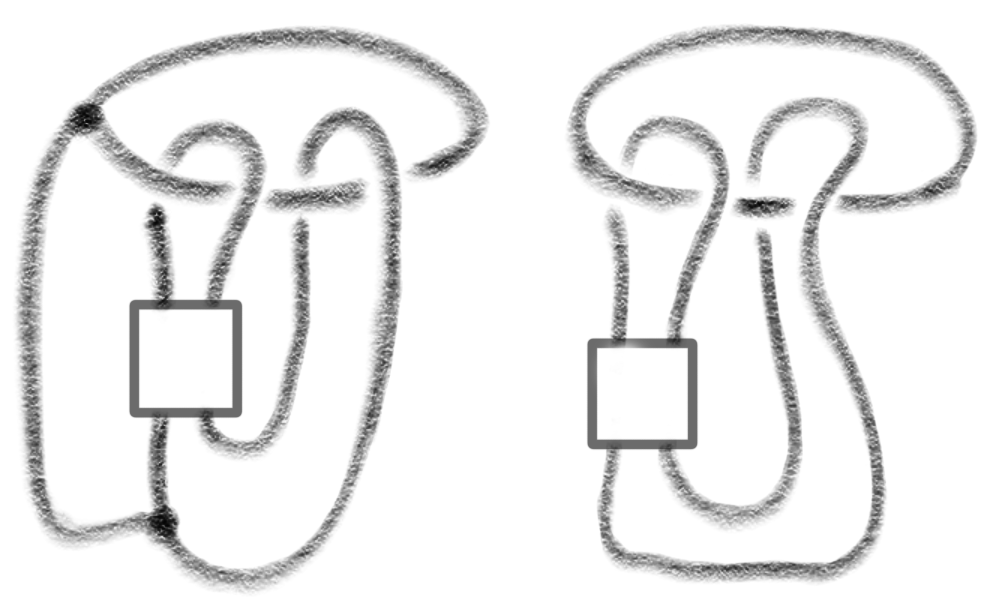}
			\put(16,22){\Large$n$}
			\put(62,18){\Large$n$}
		\end{overpic}
		\caption{$(\sphere,\Gamma_n)$ and $L_n$.}
		\label{fig:induced_sg_link}
	\end{subfigure}
		\begin{subfigure}[b]{.26\linewidth}
		\centering
		\begin{overpic}[scale=.15,percent]{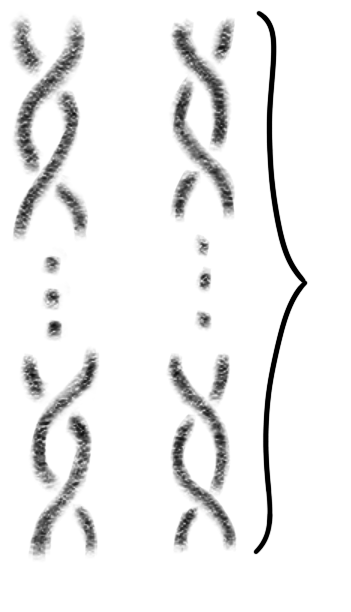}
			\put(51.3,52){$n$}
			\put(-2,2){\footnotesize $n>0$}
			\put(22,2){\footnotesize $n<0$}
		\end{overpic}
		\caption{Signs.}
		\label{fig:signs}
	\end{subfigure}
	\begin{subfigure}[b]{.35\linewidth}
	\centering
	\begin{overpic}[scale=.16,percent]{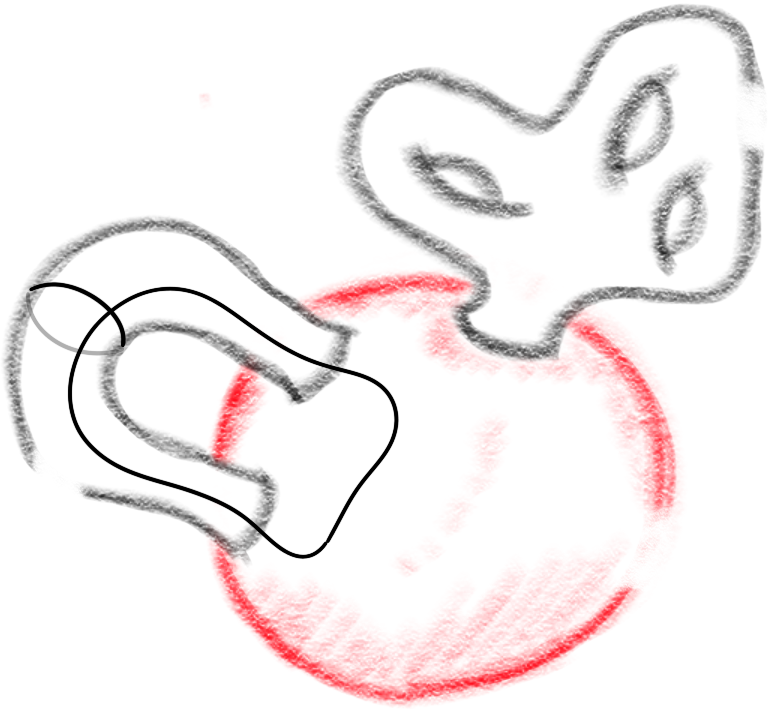}
		\put(20,75){$X$}
		\put(53,17){$\Compl X$}
		\put(3.5,49){\small $m$}
		\put(22,55){\small $l$}
		\put(13.9,54.3){\footnotesize $p$}
		\put(3,27){$A$}
		\put(94,73){$F$}
		\put(81,17){$P$}
	\end{overpic}
	\caption{Geodesics $m,l\subset\partial \HK_X$.}
	\label{fig:geodesics_X}
	\end{subfigure}
	\begin{subfigure}[b]{.35\linewidth}
	\centering
	\begin{overpic}[scale=.16,percent]{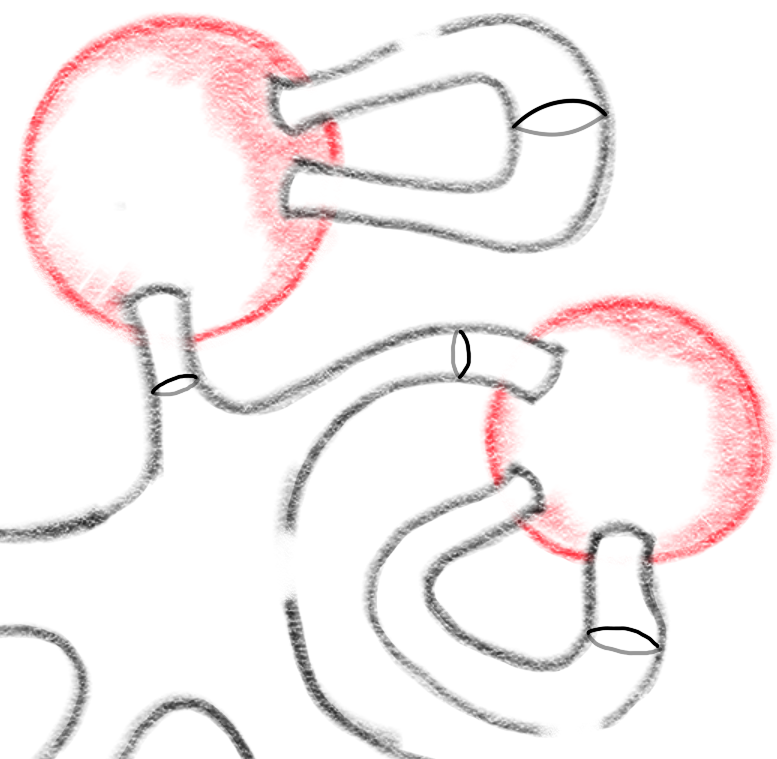}
		\put(10,70){$X_1$}
		\put(80,40){$X_2$}
		\put(18,18){$Y$}
		\put(80,82){$\alpha_1$}
		\put(86.5,12){$\alpha_2$}
		\put(12,45){$\beta_1$}
		\put(55,42){$\beta_2$}
		\put(35,22){$S$}
		\put(50.7,90.5){$A_1$}
		\put(70,0.5){$A_2$}
	\end{overpic}
	\caption{Geodesics $\alpha_i,\beta_i\subset\partial\HK_Y$.}
	\label{fig:geodesics_Y}
	\end{subfigure}
	\caption{}
\end{figure}

\section{Symmetry of $\p 3$-decomposable handlebody-knots}\label{sec:applications}

\subsection{Handlebody-knot symmetry}\label{subsec:symmetry}
Throughout the subsection, we assume that $\pair$ is $2$-indecomposable, $\p 3$-decomposable with $\systemP$ a maximal, semi-full $\p 3$-system, and $\Compl\HK$ is $\partial$-irreducible. Recall that 
$\uniP$ denotes the union of members in $\systemP$, and $e$ is the number of ends in the $\p 3$-decomposition. 

Let $M$ be a $3$-manifold and $X_1,\dots, X_n, Y$ are subpolyhedra in $M$. Then 
\[\Aut{M,X_1,\dots,X_n, \rel Y}\quad
\big(\text{resp. } \pAut{M,X_1,\dots,X_n,  \rel Y}\big)\]
denotes the space of (resp.\ orientation-preserving) self-homeomorphisms of $M$ preserving $X_1,\dots, X_n$ and fixing $Y$. The corresponding mapping class group is denoted by 
\[\MCG{M,X_1,\dots,X_n, \rel Y}\quad \big(\text{resp. } \pMCG{M,X_1,\dots,X_n, \rel Y}\big).\]
\subsection{Known results}
The uniqueness of $\systemP$ given by Theorem \ref{teo:uniqueness}, together with \cite{Hat:76} (see also \cite{Hat:99}) implies the following.

\begin{lemma}\label{lm:fix_surface}
The natural homomorphisms 
\[\Sym{\HK,\uniP}\rightarrow \Sym{\HK}, \quad \pSym{\HK,\uniP}\rightarrow \pSym{\HK}\]
are isomorphisms.
\end{lemma}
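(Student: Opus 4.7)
The plan is to prove surjectivity and injectivity of the map $\pSym{\HK, \uniP} \to \pSym{\HK}$ separately; the argument for $\Sym{\HK, \uniP} \to \Sym{\HK}$ is identical. The two inputs are Theorem \ref{teo:uniqueness} (uniqueness of the maximal $\p 3$-system) for surjectivity, and Hatcher's parametrized isotopy theorem for incompressible surfaces in Haken $3$-manifolds \cite{Hat:76, Hat:99} for injectivity.

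For surjectivity, I would start with a class $[f] \in \pSym{\HK}$ represented by some $f \in \pAut{\sphere, \HK}$. Since $f$ preserves $\HK$ setwise, the image $f(\systemP)$ is another maximal $\p 3$-system of $\pair$, so Theorem \ref{teo:uniqueness} supplies an ambient isotopy $g_t$ of $(\sphere, \HK)$ with $g_0 = \id$ and $g_1(f(\uniP)) = \uniP$. Then $g_1 \circ f \in \pAut{\sphere, \HK, \uniP}$ is isotopic to $f$ in $\pAut{\sphere, \HK}$ and furnishes the required preimage.

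For injectivity, I would take $f_0, f_1 \in \pAut{\sphere, \HK, \uniP}$ connected by an ambient isotopy $f_t \in \pAut{\sphere, \HK}$ and aim to deform this path rel endpoints into one that preserves $\uniP$ throughout. Viewing $\uniP$ as a disjoint union of essential (incompressible and $\partial$-incompressible) surfaces properly embedded in the Haken $3$-manifold $\Compl\HK$, Hatcher's theorem tells us that the inclusion $\pAut{\sphere, \HK, \uniP} \hookrightarrow \pAut{\sphere, \HK}$ is a homotopy equivalence onto the union of components whose elements send $\uniP$ to an isotopic copy. Combined with Theorem \ref{teo:uniqueness}, which forces every element of $\pAut{\sphere, \HK}$ to send the maximal $\p 3$-system $\uniP$ to an isotopic one, the image hits every component, so the inclusion is a genuine homotopy equivalence and in particular induces an isomorphism on $\pi_0$.

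The main obstacle I anticipate is invoking Hatcher's theorem in exactly the right form for a pair $(\sphere, \HK)$ rather than a single $3$-manifold: this requires translating $\pAut{\sphere, \HK, \uniP}$ and $\pAut{\sphere, \HK}$ into homeomorphism spaces of the exterior $\Compl\HK$, and verifying that the hypotheses of Hatcher's theorem (Hakenness of $\Compl\HK$, essentialness and pairwise non-parallelism of the members of $\systemP$) hold under our standing assumptions, which is where the conditions ``$g=2$ or $\Compl\HK$ is $\partial$-irreducible'' and the incompressibility results of Section \ref{sec:uniqueness} are used.
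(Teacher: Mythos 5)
Your plan is correct and matches the paper's argument, which is given only as the one-line remark that the lemma follows from Theorem \ref{teo:uniqueness} together with \cite{Hat:76}, \cite{Hat:99}: uniqueness of the maximal $\p 3$-system yields surjectivity, and Hatcher's results on spaces of incompressible surfaces yield injectivity. Your fleshed-out version, including the identified need to transfer the statement to the Haken exterior $\Compl\HK$ and to verify essentialness of the members of $\systemP$ under the standing hypotheses, is exactly the intended reading.
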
  
 
Recall that a \emph{cut system} of a handlebody is a set of disjoint essential disks in the handlebody that cuts the handlebody into some $3$-balls. A cut system induces 
a spine of the handlebody and vice versa.  
If $\mathcal{D}$ is a cut system of 
$\HK$ and $\Gamma_\mathcal{D}$ the induced spine, then we have the isomorphisms (see \cite[Lemme $2.2$]{ChoKod:13})
\[\Sym{\HK,\uniD}\simeq \Sym{\Gamma_\uniD}, \quad \pSym{\HK,\uniD}\simeq \pSym{\Gamma_\uniD}.\]

Denote by $\uniD_\uniP$ the union of disjoint disks in $\HK$ with $\partial\uniD_\uniP= \partial \uniP$, and observe that if $e=g$, namely $\systemP$ is full, then for each end $X$, $X\cap\HK$ consists of a once-punctured torus $F$ and an annulus. There is therefore, up to isotopy, a unique disk $D_X$ in $\HK$ bounded by a non-separating loop in $F$ with $D_X\cap \uniD_\uniP=\emptyset$.  
The union $\tilde\uniD_\uniP:=\cup_X D_X\cup\uniD_\uniP$ is a cut system of $\HK$, and determines a spine $\sghk$ of $\HK$. 
\begin{lemma}\label{lm:finiteness}
If $\pair$ is fully $\p 3$-decomposable and atoroidal, then $\Sym\HK$ is finite.
\end{lemma}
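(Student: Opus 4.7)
The plan is to reduce via Lemma \ref{lm:fix_surface} and a spine argument to finiteness of a symmetry group of a spatial graph, and then deduce this from atoroidality together with a piece-by-piece analysis of the $\p 3$-decomposition.

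By Lemma \ref{lm:fix_surface}, $\Sym\HK \simeq \Sym{\HK, \uniP}$. Fullness of $\systemP$ means each end $X$ yields a canonical disk $D_X \subset \HK$ up to isotopy, so the cut system $\tilde\uniD_\uniP = \uniD_\uniP \cup \bigcup_X D_X$ is determined by $\uniP$; hence every class in $\Sym{\HK, \uniP}$ is represented by a homeomorphism preserving $\tilde\uniD_\uniP$, giving
\[
\Sym\HK \simeq \Sym{\HK, \uniP} \simeq \Sym{\HK, \uniP, \tilde\uniD_\uniP} \hookrightarrow \Sym{\HK, \tilde\uniD_\uniP} \simeq \Sym\sghk,
\]
where the final isomorphism is that of Cho--Koda \cite{ChoKod:13}. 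It therefore suffices to show that $\Sym\sghk$ is finite.

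Since $\sghk$ is a finite graph, the natural forgetful homomorphism $\Sym\sghk \to \mathrm{Aut}(\sghk)$ into the abstract graph automorphism group has finite image, so the task reduces to showing the kernel is finite. A class in this kernel is represented by a self-homeomorphism of $\pair$ that preserves $\uniP$, fixes each end $X_i$ and the central piece $Y$, and preserves each disk of $\tilde\uniD_\uniP$; it therefore injects into the product of the relative mapping class groups of the individual pieces cut out by $\uniP$ and $\tilde\uniD_\uniP$.

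To bound each such piece I would invoke atoroidality and $\partial$-irreducibility, which descend from $\Compl\HK$ and $\HK$ to the pieces, together with an anannularity argument: using Lemma \ref{lm:annuli} and the structural description in Theorem \ref{teo:ends_connectors}, any essential annulus in a piece should be pushed to produce either a $2$-decomposing surface in $\Compl\HK$ (contradicting $2$-indecomposability) or an incompressible torus in $\Compl\HK$ (contradicting atoroidality). Once each piece is atoroidal, $\partial$-irreducible, and anannular, classical finiteness of mapping class groups for Haken $3$-manifolds of this type (Johannson--Hatcher) bounds each piece, hence the kernel, hence $\Sym\sghk$. The main obstacle I anticipate is the anannularity step, where one must carefully track how essential annuli inside an end $X_i$ or the central piece $Y$ interact with $P_i$, the meridional disks, and the $g$-punctured sphere in $Y \cap \HK$ in order to force the forbidden global configurations.
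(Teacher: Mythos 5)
Your reduction to $\Sym{\sghk}$ is exactly the paper's: Lemma \ref{lm:fix_surface}, the canonicity of the disks $D_X$ coming from fullness, and the Cho--Koda identification $\Sym{\HK,\tilde\uniD_\uniP}\simeq\Sym{\sghk}$. At that point the paper simply stops and quotes \cite[Theorems 2.5, 3.2]{ChoKod:13}, which give finiteness of the spatial-graph mapping class group under atoroidality; you instead try to prove that finiteness by hand, and the step you yourself flag as the obstacle is where the argument breaks.

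The gap is the anannularity claim. The standing hypotheses ($2$-indecomposable, $\partial$-irreducible, atoroidal) do \emph{not} exclude essential annuli in $\Compl\HK$ or in the pieces: $2$-indecomposability only forbids incompressible, non-boundary-parallel surfaces whose boundary curves bound disks in $\HK$ (meridional boundary). An essential annulus whose boundary curves are non-meridional on $\partial\HK$ contradicts neither $2$-indecomposability nor atoroidality, and Lemma \ref{lm:annuli} only lets you push such an annulus off $\uniP$ --- it does not convert it into a $2$-decomposing surface or a torus. So your claimed dichotomy fails, the pieces need not be simple in Johannson's sense, and the absolute mapping class groups of the pieces can be infinite (annulus twists), so an injection into their product proves nothing. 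The way out --- which is what the Cho--Koda theorems encode --- is to work with the mapping class group of each piece \emph{relative to the boundary pattern} given by the decomposing surfaces and the meridian disks of $\tilde\uniD_\uniP$: an annulus twist that does not respect the disk system is not an element of $\Sym{\sghk}$ in the first place, and finiteness for the pared/patterned pieces then does follow from atoroidality. (A smaller point: your injection of the kernel into the product of piece mapping class groups also needs a word about twists supported near the decomposing $3$-punctured spheres; this is harmless because the relevant homeomorphism groups of planar surfaces rel boundary are contractible, but it is not automatic.) As written, the proposal does not close the argument; with the relative/patterned formulation it would essentially reprove the cited Cho--Koda results.
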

\begin{proof}
By Lemma \ref{lm:fix_surface}, we have 
$\Sym\HK\simeq \Sym{\HK,\uniP}$ and since $\systemP$ is unique, $\Sym{\HK,\uniP}\simeq \Sym{\HK,\tilde \uniD_\uniP}$.
Therefore $\Sym\HK\simeq \Sym{\sghk}$. The assertion then follows from \cite[Theorems $2.5$, $3.2$]{ChoKod:13}.
\end{proof}
\cout{
On the other hand, if $g=2$, the result of Funayoshi-Koda \cite{FunKod:20} implies the following.
\begin{lemma}\label{lm:finiteness_genustwo}
Given a genus two non-trivial $\pair$, 
$\pair$ is atoroidal if and only if $\Sym\HK$ is finite. 
\end{lemma}
}


\begin{lemma}\label{lm:surface_involutions}
Let $F$ be a closed surface of genus $g>1$, and $f,g$ two involutions of $F$.
If $f$ and $g$ are isotopic, then 
there exists an isotopy $\phi_t\in \Aut{F}$ such that $f=\phi_1 g\phi_1^{-1}$ and $\phi_0=\id$.
\end{lemma}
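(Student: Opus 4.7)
The plan is to derive the lemma from Kerckhoff's Nielsen realization theorem applied to the finite cyclic group $\langle\tau\rangle<\MCG{F}$, where $\tau := [f] = [g]\in\MCG{F}$. Since the genus is larger than $1$ and $\tau^2 = \id$, the fixed locus $\mathcal{T}(F)^\tau$ of Teichmüller space is nonempty, and for every hyperbolic metric $\sigma$ representing a point of $\mathcal{T}(F)^\tau$ there is a unique isometric involution $j_\sigma$ of $(F,\sigma)$ in the class $\tau$; uniqueness follows from the classical injection $\op{Isom}(F,\sigma)\hookrightarrow\MCG{F}$ available when the genus exceeds $1$.

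Fix a base metric $\sigma_0$ and set $j := j_{\sigma_0}$. The key intermediate claim I would prove is that every involution $h\in\Aut{F}$ with $[h] = \tau$ is conjugate to $j$ by a homeomorphism isotopic to $\id$. The argument goes: the pulled-back metric $h^*\sigma_0$ is hyperbolic with $h$ as an isometric involution, and satisfies $[h^*\sigma_0] = [\sigma_0]$ in $\mathcal{T}(F)^\tau$ because $[h] = \tau$ fixes that point; thus there is a homeomorphism $\phi$ isotopic to $\id$ with $\phi^*\sigma_0 = h^*\sigma_0$, making $\phi^{-1}h\phi$ an isometric involution of $\sigma_0$ in class $\tau$, which by uniqueness equals $j$.

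Applying the claim to $f$ and to $g$ yields $\psi_f,\psi_g\in\Aut{F}$ isotopic to $\id$ with $\psi_f^{-1}f\psi_f = j = \psi_g^{-1}g\psi_g$, so that $\phi_1 := \psi_f\psi_g^{-1}$ is isotopic to $\id$ and satisfies $\phi_1 g\phi_1^{-1} = f$. Choosing an isotopy from $\id$ to $\phi_1$ inside the path-connected identity component of $\Aut{F}$ (in fact contractible when the genus exceeds $1$, by Hamstrom and Earle--Eells) produces the required family $\phi_t$.

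I expect the principal difficulty to be the transition between the smooth/Teichmüller-theoretic framework, where Nielsen realization is most naturally phrased, and the PL category used throughout the paper; this is handled by standard PL-smooth equivalence for closed surfaces. A shortcut that bypasses this issue is to cite directly the classical consequence of Nielsen realization asserting that two isotopic finite-order self-homeomorphisms of a closed surface of genus greater than $1$ are conjugate by a homeomorphism isotopic to $\id$, after which the desired isotopy follows immediately from the connectedness of the identity component of $\Aut{F}$.
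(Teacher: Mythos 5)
Your overall strategy (Nielsen realization plus uniqueness of the isometric representative of $\tau$ at a fixed point of Teichm\"uller space) is a sensible one, but the proof of your key intermediate claim breaks down, and it breaks exactly where the content of the lemma lies. First, $h$ is \emph{not} an isometric involution of $h^*\sigma_0$: that would require $h^*(h^*\sigma_0)=h^*\sigma_0$, i.e.\ $h^*\sigma_0=\sigma_0$, which is what you are trying to arrange. The standard fix is to average $\sigma_0$ with $h^*\sigma_0$ and uniformize, producing a hyperbolic metric $\sigma_h$ with $h^*\sigma_h=\sigma_h$; but then, second and fatally, the point $[\sigma_h]$ lies in $\mathcal{T}(F)^\tau$ without any reason to equal $[\sigma_0]$, since the fixed locus is identified with the Teichm\"uller space of the quotient orbifold and is positive-dimensional. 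So you cannot conclude that $h$ and $j$ are isometric involutions of isotopic metrics. If instead you run the computation with the pullback exactly as written, the map your identity $\phi^*\sigma_0=h^*\sigma_0$ actually exhibits as an isometry of $\sigma_0$ is $h\phi^{-1}$, not $\phi^{-1}h\phi$ (check: $(h\phi^{-1})^*\sigma_0=(\phi^{-1})^*h^*\sigma_0=(\phi^{-1})^*\phi^*\sigma_0=\sigma_0$); uniqueness then gives $h\phi^{-1}=j$, i.e.\ $h=j\phi$ with $\phi\simeq\id$, which merely restates $h\simeq j$ and is not a conjugation. The missing ingredient is precisely the assertion that every point of $\mathcal{T}(F)^\tau$ is represented by an \emph{exactly} $h$-invariant hyperbolic metric (equivalently, that $\mathcal{T}(F/\langle h\rangle)\to\mathcal{T}(F)^\tau$ is onto); that assertion is essentially equivalent to the lemma itself and requires its own nontrivial proof (via equivariance of Teichm\"uller extremal maps or harmonic maps, or via Nielsen's classification of periodic maps together with a Birman--Hilden-type argument). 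You should also note that the involutions here may be orientation-reversing, so one must invoke Nielsen realization for the extended mapping class group.

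Your closing ``shortcut'' --- citing outright the classical theorem that isotopic finite-order self-homeomorphisms of a closed surface of genus greater than one are conjugate by a homeomorphism isotopic to the identity, and then producing $\phi_t$ from the connectedness of the identity component of $\Aut{F}$ --- is legitimate and is in effect what the paper does: its entire proof is the citation of Tollefson \cite[Theorem $6.1$]{Tol:81}. As an actual self-contained argument, however, the middle portion of your proposal does not close.
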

\begin{proof}
It is a special case \cite[Theorem $6.1$]{Tol:81}.
\end{proof}

As a corollary of \cite[Addendum]{Zim:82} and \cite[Theorem $7.1$]{Wal:68}, we have the following.
\begin{lemma}\label{lm:involution}
Given $h\in \Aut{\Compl\HK}$, if $h^2$ is isotopic to $\id$, then there exists an involution $g$ of $\Compl\HK$ isotopic to $h$.
\end{lemma}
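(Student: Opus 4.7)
The plan is to pass to the mapping class group and invoke an equivariant realization theorem. First, I would observe that $\Compl\HK$ is a compact, orientable, irreducible $3$-manifold with non-empty boundary (the genus $g$ surface $\partial\HK$): irreducibility follows from Alexander's theorem applied to $\sphere$ together with the connectedness of $\HK$. Hence $\Compl\HK$ is a Haken manifold, which is the setting in which both Zimmermann's and Waldhausen's theorems are formulated. The hypothesis that $h^2$ is isotopic to $\id$ translates to the statement $[h]^2=[\id]$ in $\MCGboth{\Compl\HK}$, so the cyclic subgroup $\langle[h]\rangle$ has order at most $2$.

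If $[h]=[\id]$ we simply take $g=\id$. Otherwise $\langle[h]\rangle$ is a $\Z_2$-subgroup of $\MCGboth{\Compl\HK}$, and here I would invoke the Addendum of Zimmermann~\cite{Zim:82}: for a Haken $3$-manifold, every finite subgroup of the mapping class group is realized by a finite subgroup of $\Aut{\Compl\HK}$ of the same order, mapping isomorphically onto it. The lift of $\langle[h]\rangle$ is then a cyclic group $\langle g\rangle\subset\Aut{\Compl\HK}$ of order two; its generator $g$ is an involution whose isotopy class is exactly $[h]$, which by definition of the mapping class group means that $g$ is isotopic to $h$, as required.

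Waldhausen's Theorem~7.1~\cite{Wal:68} enters as the bridge between homotopy and isotopy in the Haken category: homotopic self-homeomorphisms of a Haken $3$-manifold are isotopic. This identification of $\MCGboth{\Compl\HK}$ with a group of homotopy classes is precisely the framework in which Zimmermann's realization is phrased (via outer automorphisms of $\pi_1$), and is what allows the lift produced by \cite{Zim:82} to be recognized as a genuine isotopy representative of $[h]$.

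The main obstacle is to verify that the precise hypotheses of the Addendum apply in our setting. Since $h$ is not assumed orientation-preserving, we need the realization statement for the full group $\MCGboth{\Compl\HK}$ rather than only $\pMCG{\Compl\HK}$; we must also check that the $\partial$-irreducibility (or at least incompressibility of $\partial\Compl\HK=\partial\HK$ in the cases in which the lemma is applied) fits the boundary-behaviour hypotheses of \cite{Zim:82}. Once these verifications are in place, the lemma follows immediately from the two cited results.
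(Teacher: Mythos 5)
Your proposal is correct and follows exactly the route the paper takes: the lemma is stated there as an immediate corollary of the Addendum of Zimmermann \cite{Zim:82} (realization of the finite subgroup $\langle[h]\rangle$ by a finite group of homeomorphisms) combined with Waldhausen's Theorem~7.1 \cite{Wal:68} (homotopy implies isotopy for the Haken manifold $\Compl\HK$), with no further argument given. Your added verifications that $\Compl\HK$ is Haken and that the orientation-reversing case is covered are sensible elaborations of the same citation-based proof.
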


\subsection{Symmetries of finite order} 
Throughout the rest of the section, 
$H$ is a finite subgroup of $\Sym\HK$, and we set $H_+:=H\cap\pSym\HK$, and if $H_+\neq H$, define $H_-:=H-H_+$.
Consider the homomorphisms
\[
\pMCG{\sphere,\HK}\xrightarrow{\phi}\pMCG{\HK},\quad \pMCG{\HK}\xrightarrow{\psi} \pMCG{\partial\HK},
\]
given by the restriction maps.
\begin{lemma}\label{lm:pair_surface_injective}
The homomorphism $\psi\circ\phi$ restricts to an injection on $H$.   
\end{lemma}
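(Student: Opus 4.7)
The plan is to show that any class $[h] \in H$ lying in the kernel of $\psi \circ \phi$ is trivial in $\Sym\HK$. The route will be to first straighten $h$ along $\partial\HK$, then dispose of the handlebody side by a classical rel-boundary result, and on the exterior side exploit the finiteness of $H$ together with torsion-freeness of the rel-boundary mapping class group.

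I would take $[h] \in H$ with $(\psi \circ \phi)([h]) = 1$, so that $h|_{\partial\HK}$ is isotopic to $\id$ in $\Aut{\partial\HK}$. Extending this boundary isotopy to a collar isotopy, via the isotopy extension theorem in a bicollar of $\partial\HK$ inside $(\sphere,\HK)$, allows one to replace the representative $h$ within its class so that $h|_{\partial\HK} = \id$ pointwise. Then $h$ splits into $h_V := h|_\HK$ and $h_E := h|_{\Compl\HK}$, each the identity on $\partial\HK$. On the handlebody side, the mapping class group of a handlebody relative to its boundary is trivial (Laudenbach; equivalently a cut-system argument that reduces to Alexander's trick on balls), so $h_V$ is isotopic rel $\partial\HK$ to $\id_\HK$.

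For the exterior side, $\Compl\HK$ is irreducible and, by hypothesis, $\partial$-irreducible, hence Haken. Writing $n = \vert H\vert$, the relation $[h]^n = 1$ in $\Sym\HK$, combined with the rel-$\partial\HK$ isotopy of $h_V$ obtained above, furnishes an isotopy of $(\sphere, \HK)$ from $h^n$ to $\id$ that fixes $\partial\HK$ throughout; restricting this to $\Compl\HK$ shows that $[h_E]$ is a torsion element of $\MCG{\Compl\HK, \rel \partial\Compl\HK}$. The concluding step invokes torsion-freeness of this rel-boundary mapping class group for a Haken $3$-manifold with non-empty incompressible boundary: by the Johannson--McCullough structure theorem, it is generated by Dehn twists along essential annuli and tori in the characteristic submanifold, each of infinite order. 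Thus $[h_E] = 1$, and gluing the rel-$\partial\HK$ isotopies on the two sides yields an isotopy $h \simeq \id$ in $\Aut{\sphere,\HK}$.

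The main obstacle is the torsion-freeness step, where the finiteness of $H$ is genuinely indispensable: without it the conclusion would fail, since e.g.\ a Dehn twist along an essential torus in $\Compl\HK$, extended by the identity on $\HK$, produces a non-trivial element of $\ker(\psi \circ \phi)$ whenever $\Compl\HK$ is toroidal. An alternative to quoting the Johannson--McCullough structure theorem is to realize $[h_E]$ as a genuine finite-order homeomorphism via the Meeks--Scott equivariant resolution (or Lemma \ref{lm:involution} in the order-two case) and then observe, by Smith theory together with local orientation considerations, that an orientation-preserving finite-order self-homeomorphism of $\Compl\HK$ fixing the codimension-one surface $\partial\Compl\HK$ pointwise must be the identity on a collar, hence everywhere.
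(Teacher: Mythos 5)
Your argument is essentially the paper's proof unpacked at the level of representatives: both reduce the claim to (a) triviality of the handlebody mapping class group rel boundary via the Alexander trick and (b) torsion-freeness of $\MCG{\Compl\HK,\rel\partial \Compl\HK}$ (Hatcher--McCullough), applied to the finite group $H$. The one step you gloss over is the promotion of the free isotopy $h^n\simeq \id$ to an isotopy fixing $\partial\HK$ throughout, which needs $\pi_1(\Aut{\partial\HK})=1$ (Scott's theorem); the paper supplies exactly this when it assembles the exact sequence $0\rightarrow\MCG{\Compl\HK,\rel\partial \Compl\HK}\rightarrow\MCG{\sphere,\HK}\rightarrow\MCG{\HK}$.
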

\begin{proof}
It is known that $\Aut{\HK,\rel\partial \HK}$ is contractible by \cite{Hat:76} and Alexander trick, so $\psi$ is injective.  
Since $\pi_1(\Aut{\partial\HK})$ is trivial by \cite[Theorem $3.1$]{Sco:70}, we have $\pi_1(\Aut{\HK})$ is also trivial. 
Therefore, there is an exact sequence
\[0\rightarrow\MCG{\Compl\HK,\rel\partial \Compl\HK}\rightarrow\MCG{\sphere,\HK}\xrightarrow{\phi}\MCG{\HK}.\]
Since $\MCG{\Compl\HK,\rel\partial \Compl\HK}$ is torsion free by \cite{HatMcC:97}, $\phi\vert_H$ is injective.
\end{proof}

\begin{lemma}\label{lm:orien_pre_some_end}
Given $f\in H_+$, if 
$f(X)=X$, for some end $X$, 
then $f\simeq \id$.
\end{lemma}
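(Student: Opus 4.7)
The plan is to normalize $f$ using the uniqueness of $\systemP$ given by Theorem \ref{teo:uniqueness}, and then apply Lemma \ref{lm:pair_surface_injective} to reduce the problem to showing that $f$ acts trivially on $\partial\HK$ up to isotopy.

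By Theorem \ref{teo:uniqueness} together with Lemma \ref{lm:fix_surface}, I may assume $f \in \pSym{\HK,\uniP}$; since $X$ is an end, $f$ preserves the $\p 3$-surface $P := \partial_f X$. The three boundary circles $C_1, C_2, C_3$ of $P$ are topologically distinguished in $\partial\HK$: the middle circle $C_2$ borders two annular pieces of $\partial\HK \setminus \partial P$, while $C_1 = \partial F_1$ and $C_3 = \partial F_3$ each bound a once-punctured closed surface, and the once-punctured surface $F \subset X \cap \partial\HK$ is bounded, say, by $C_1$ and not by $C_3$. Because $f$ is orientation-preserving and fixes $X$, it must fix each $C_i$ setwise, and therefore preserves, up to isotopy, each of the meridional disks $D_1, D_2, D_3 \subset \HK$ bounded by the $C_i$, as well as each of the four pieces $V_1, V_{21}, V_{23}, V_3$ of $\HK$ cut along these disks.

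By Lemma \ref{lm:pair_surface_injective}, it suffices to show $f|_{\partial\HK}$ is isotopic to the identity. Since the orientation-preserving mapping class group of an annulus (respectively a pair of pants) with each boundary circle preserved but allowed to move is trivial, I can successively isotope $f$ to be the identity on the annular pieces $A_{21}, A_{23}$ of $\partial\HK \setminus \partial P$ and on $P$, and then pointwise on $\partial P$. The remaining task is to show that $f|_{F_i}$, for $i = 1,3$, is isotopic rel $\partial F_i$ to the identity. For this, $f$ restricts to an orientation-preserving finite-order self-homeomorphism of the pair $(V_i, F_i)$ fixing $\partial F_i$ pointwise; combining Lemma \ref{lm:involution} (in the order-two case) with Lemma \ref{lm:surface_involutions} and a Nielsen-realization-type argument on $F_i$, such a map is isotopic to the identity rel $\partial F_i$.

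The main obstacle is this last step: showing that a finite-order orientation-preserving self-homeomorphism of the pair $(V_i, F_i)$ with fixed boundary circle must be isotopic to the identity rel $\partial F_i$. Ruling out nontrivial candidates will likely require invoking the global $\p 3$-structure, through the uniqueness of $\systemP$, to see that $f$ is constrained not only on the end $X$ but also on the rest of $\Compl\HK$; care must also be taken to handle the semi-full (not necessarily full) setting in which $g_i$ may exceed $1$, so that the argument does not implicitly reduce to the case of an end having genus one.
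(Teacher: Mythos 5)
Your normalization and reduction are the same as the paper's: use Theorem \ref{teo:uniqueness} and Lemma \ref{lm:fix_surface} to assume $f$ preserves $P=\partial_f X$ and each boundary circle $C_i$, and use Lemma \ref{lm:pair_surface_injective} to reduce to showing $f\vert_{\partial\HK}\simeq\id$. The problem is the step you yourself flag as ``the main obstacle'': that is where the entire content of the lemma lies, and the way you have localized it cannot be repaired. The pair $(V_i,F_i)$ is an abstractly standard handlebody with a once-punctured surface in its boundary; it carries no information about the embedding of $\HK$ in $\sphere$, and the statement ``a finite-order orientation-preserving self-homeomorphism of $(V_i,F_i)$ fixing $\partial F_i$ is isotopic to the identity rel $\partial F_i$'' is simply false. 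Already for $g_i=1$ the flip of the solid torus $V_i$ (acting by $-\id$ on $H_1(\partial V_i)$) preserves $(V_i,F_i)$, is orientation-preserving of order two, and acts nontrivially on $H_1(F_i)$, so it is not isotopic to the identity; analogous hyperelliptic-type symmetries exist for every $g_i$. Moreover, Lemma \ref{lm:involution} is a statement about $\Compl\HK$ and Lemma \ref{lm:surface_involutions} about closed surfaces of genus $>1$, and elements of $H_+$ may have any finite order, not just two, so neither tool applies to $(V_i,F_i)$ in the way you indicate. There is also a smaller unaddressed point: after making $f$ the identity on $P$, the annuli, and $\partial P$, the restriction to $F_i$ is only well defined up to Dehn twists about $\partial F_i$.

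The paper avoids all of this by never cutting $\HK$ into unknotted pieces. Instead it passes to the induced handlebody-knots $\inducedhkX$ (with $\hkX=\Compl X$) and $\inducedhkx$ (with $\hkx=\HK\cup X$): the class $[f_X]$ has finite order in $\MCG{\sphere,\hkX}$, so by Nielsen realization $f_X\vert_{\partial\hkX}$ is isotopic to an isometry $g$ of a hyperbolic structure on the closed surface $\partial\hkX=P\cup A\cup F$. Because $g$ preserves the isotopy classes of $\partial A$ and $\partial F$, it preserves the geodesic $m$ isotopic to $\partial A$ and a geodesic $l$ meeting $m$ once and disjoint from the geodesic of $\partial F$; fixing their intersection point and a frame there forces $g=\id$. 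Yagasaki's contractibility of $\Emb{P,\partial\hkX}$ then upgrades $f_X\vert_{\partial\hkX}\simeq\id$ to an isotopy rel $P$, the same argument on the $\hkx$ side is glued along $P$ to give $f\vert_{\partial\HK}\simeq\id$, and Lemma \ref{lm:pair_surface_injective} concludes. The global input you are missing is exactly this: the finiteness is exploited on the closed spatial surfaces $\partial\hkX$ and $\partial\hkx$, where the curves $\partial A$ and $\partial F$ are canonically distinguished, rather than on the abstract pieces $(V_i,F_i)$, where nontrivial finite-order symmetries genuinely exist.
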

\begin{proof}
By Lemma \ref{lm:fix_surface}, it may be assumed that $f(P)=P$. Since $f$ does not permute components of $\partial P$, one may further assume $f\vert_P=\id$. 
In particular, $f$ induces homeomorphisms
$f_X\in\pAut{\sphere,\hkX,\rel P},
f^X\in\pAut{\sphere,\hkx, \rel P}$. 

The mapping class $[f]$, being of finite order, implies $[f_X]$ and hence $[f_X\vert_{\partial\hkX}]$ are of finite order. Applying the 
Nielsen realization \cite{Nil:80}, \cite{Nil:83}, one finds a hyperbolic structure on $\partial\hkX$ and an isometry $g$ isotopic to 
$f_X\vert_{\partial\hkX}$ in $\pAut{\partial\hkX}$.  

Denote by $A,F$ the annulus and once-punctured surface in $X\cap\HK$ and $A',F'$ the ones in 
$\Compl X\cap\HK$.
Let $m$ be the geodesic isotopic to $\partial A$ and $l$ the geodesic isotopic to a loop meeting $m$ at one point and disjoint from the geodesic isotopic to $\partial F$ (see Fig.\ \ref{fig:geodesics_X}). Then the loops $m,l$ are preserved by $g$. Since 
$m\cap l$ is a point $p$, the two loops $m,l$ are actually fixed by $g$, so $g$ fixes a frame at $p$. Therefore $g=\id$. 

Consider now the fibration sequence given by the restriction maps:
\[
\pAut{\partial\hkX,\rel P}
\rightarrow
\pAut{\partial\hkX}
\rightarrow
\Emb{P,\partial\hkX},
\]
where $\Emb{P,\hkX}$ is the component of the space of embeddings of $P$ in $\partial\hkX$ that 
contains the inclusion.
By \cite[Theorem $1.1$]{Yag:05}, 
the space $\Emb{P,\partial\hkX}$ is contractible, so
\[
\pMCG{\partial\hkX,\rel P}\rightarrow \pMCG{\partial\hkX}
\] 
is an isomorphism. 
Since $f_X\vert_{\partial\hkX}$ and $g=\id$ are isotopic in $\pAut{\partial\hkX}$, 
we conclude that $f_X$ and $\id$ are isotopic in $\pAut{\partial\hkX, \rel P}$. 

With $A,F$ replaced by $A',F'$, the above argument applies to $\hkx$ and shows that 
$f^X$ is isotopic to $\id$ in 
$\pAut{\partial\hkx, \rel P}$, 
and hence $f\vert_{\partial\HK}$ is isotopic to $\id$ 
in $\pAut{\partial\HK}$. 
The assertion thence follows from Lemma \ref{lm:pair_surface_injective}.
\end{proof}

\begin{lemma}\label{lm:orien_rev_some_end}
If $f\in H_-$ and $f(X)=X$, for some end $X$, then $\vert\systemP\vert=1$.
\end{lemma}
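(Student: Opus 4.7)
The plan is to argue by contradiction: assume $|\systemP|\geq 2$ and derive a contradiction. Let $Y$ be the component of $\Compl\HK-\uniP$ opposite $X$ across $P=\partial_f X$; since $|\systemP|\geq 2$, $Y$ is not an end. Following the opening of Lemma \ref{lm:orien_pre_some_end}, by Lemma \ref{lm:fix_surface} I may assume $f(\uniP)=\uniP$, whence $f(P)=P$; because $Y$ is the unique non-$X$ component meeting $P$, also $f(Y)=Y$. Using the topological asymmetry between $F_1$ (positive genus) and $A_{23}$ (annulus) on the $X$-side of $P$, and the analogous asymmetry on the $Y$-side (between $F_3$ and $A_{12}$), $f$ fixes each of $C_1,C_2,C_3\subset\partial P$ setwise. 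A parallel argument applied to other ends adjacent to $Y$ shows that $f$ fixes setwise every boundary circle of every $\p 3$-surface in $\partial_f Y$.

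Next, I apply Nielsen realization to $[f|_{\partial\HK_Y}]$ to obtain an orientation-reversing finite-order isometry $g_Y$ of $\partial\HK_Y$ (for a suitable hyperbolic structure) isotopic to $f|_{\partial\HK_Y}$. By Theorem \ref{teo:ends_connectors}, $Y\cap\partial\HK$ contains at least two annular pieces $B_1,B_2$, and since $f$ fixes each relevant boundary circle setwise, $f$ preserves each $B_i$ individually. Let $\alpha_i\subset\partial\HK_Y$ be the geodesic representative of the core of $B_i$, and $\beta_i$ a geodesic meeting $\alpha_i$ once, chosen so that $\alpha_1,\beta_1,\alpha_2,\beta_2$ are pairwise distinct (as suggested by Fig.\ \ref{fig:geodesics_Y}). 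Then $g_Y$ preserves all four geodesics and therefore fixes the two distinct points $p_i=\alpha_i\cap\beta_i$, $i=1,2$.

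The contradiction comes from the tangent-space action of $g_Y$. At each $p_i$, the differential lies in $O(2)$ with determinant $-1$ and preserves the two tangent lines of $\alpha_i$ and $\beta_i$, so $g_Y$ is a reflection whose fixed axis has tangent direction at $p_i$ equal to that of $\alpha_i$ or $\beta_i$. Since a geodesic is determined by a point and a tangent direction, the reflection's axis must coincide with one of $\alpha_1,\beta_1$ at $p_1$ and with one of $\alpha_2,\beta_2$ at $p_2$, forcing one of the equalities $\alpha_1=\alpha_2$, $\alpha_1=\beta_2$, $\beta_1=\alpha_2$, $\beta_1=\beta_2$; but these are excluded by pairwise distinctness. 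The main obstacle is the parallel argument in the first step: when $f$ permutes other ends adjacent to $Y$, a combinatorial analysis of the $f$-action on the dual tree of the $\p 3$-decomposition, together with an adjustment of the companion geodesics $\beta_i$ (or substitution of $f$-invariant loops on $f$-invariant subsurfaces of $Y\cap\partial\HK$), is needed to locate the second fixed point of $g_Y$.
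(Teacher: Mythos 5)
Your opening moves match the paper's: reduce to the case where $f$ preserves $P$ and the adjacent non-end component $Y$, and use Nielsen realization to replace $f\vert_{\partial\hkY}$ by a finite-order isometry. But the step that is supposed to produce the contradiction is broken, for two reasons. First, the $g_Y$-invariance of the auxiliary geodesics $\beta_i$ is not established: a finite-order, orientation-reversing mapping class of the once-punctured torus $P_i\cup A_i$ preserving the core of $A_i$ need not preserve the isotopy class of \emph{any} curve meeting that core once (on homology, $\left(\begin{smallmatrix}1&1\\0&-1\end{smallmatrix}\right)$ has order two, fixes $(1,0)$, but fixes no class $(x,\pm 1)$ even up to sign), so ``a geodesic meeting $\alpha_i$ once'' cannot simply be chosen to be invariant. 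Second, and fatally, the fixed-point set of an orientation-reversing involution of a closed hyperbolic surface is a disjoint union of simple closed geodesics, typically with several components; there is no single ``reflection axis.'' The component of the fixed locus through $p_1$ may be $\alpha_1$ and the component through $p_2$ may be $\alpha_2$ with $\alpha_1\neq\alpha_2$, so none of the identifications among $\alpha_1,\beta_1,\alpha_2,\beta_2$ is forced. Indeed, no contradiction can be extracted from the surface $\partial\hkY$ alone: orientation-reversing involutions of a genus-$e$ surface preserving exactly such a configuration of curves exist in abundance (e.g.\ the restriction of a mirror symmetry of a standardly embedded handlebody), so the configuration you are trying to rule out is not contradictory.

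The paper's proof shows what extra input is needed. After realizing $g_0$ on $\partial\hkY$, it extends $g_0$ to an involution of the handlebody $\hkY$ preserving the spine $\sgY$ (via an invariant cut system and the Alexander trick) and, using Lemmas \ref{lm:involution} and \ref{lm:surface_involutions}, to an involution of $\Compl\hkY$, hence to an involution $g$ of all of $\sphere$. The contradiction then rests on three-dimensional facts your argument never invokes: an orientation-reversing involution of $\sphere$ has fixed-point set a $2$-sphere (or points), and the forced position of that $2$-sphere relative to $\sgY$ yields a compressing disk of $\partial_f X_1$ in the case where $Y\cap\HK$ has an $e$-punctured sphere piece, or a planar embedding of $\sgY$ in the three-annuli case, contradicting the incompressibility, respectively the non-parallelism, of the $\p 3$-surfaces. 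Without extending over the $3$-manifold and using incompressibility, the proposed route cannot be completed.
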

\begin{proof}
Let $X:=X_1,\dots, X_e$ be the ends of the $\p 3$-decomposition. Then it suffices to show $e=2$ and $\partial_f X_1=\partial_f X_2$. Suppose otherwise. Then 
$Y:=\Compl\HK-\cup_{i=1}^e \mathring{X_i}$ is non-empty. By Theorem \ref{teo:ends_connectors}, there are two possibilities: 
\begin{enumerate}[label=\textnormal{(\roman*)}]
\item\label{itm:cuff_like} $Y\cap\HK$ consists of $e$ annuli $A_1,\dots, A_e$ and an $e$-punctured sphere $S$ with $\partial A_i\subset \partial_f X_i$;
\item\label{itm:theta} $e=2$ and $Y\cap \HK$ consists of three annuli $A_1,A_2,A_3$ each of which meet both $X_1,X_2$. 
\end{enumerate}   
Note that the mapping class $[f]\in\Sym\HK$ is of order two by Lemma \ref{lm:orien_pre_some_end}, so the induced mapping class $[f_Y]\in\Sym\hkY$ is of order two. By the Nielsen realization, there exists a hyperbolic structure on $\partial\hkY$ such that $f_Y\vert_{\partial\hkY}$ is isotopic to an isometry $g_0$ with $g_0^2=\id$.

Consider first the case \ref{itm:cuff_like}, and denote by $C_i$, $i=1,\dots,e$, the components of $\partial S$. Let $\alpha_i,\beta_i\subset \partial\hkY$ be the geodesics isotopic to $\partial A_i, C_i$, respectively (see Fig.\ \ref{fig:geodesics_Y}). 
For each $\alpha_i,\beta_i, i=1,\dots,e$, we choose a disk in $\HK$ bounded by it,
and denote by $\uniD$ the union of these disks. Let $\sigma_i$ (resp.\ $\hat\sigma_i$) be the loop (resp.\ edge) dual to the disk bounded by $\alpha_i$ (resp.\ $\beta_i$). Then $\cup_{i=1}^e(\sigma_i\cup\hat\sigma_i)$
is the spine $\sgY$ of $\hkY$. Set $v:=\cap_i \hat\sigma_i$, $v_i:=\sigma_i\cap\uniD$, and $\hat v_i:=\hat\sigma_i\cap\uniD$. 

Since $g_0$ is an involution and $f$ preserves $X_1\cup\dots\cup X_e$, $g_0$ either preserves $\alpha_i$ 
or swaps $\alpha_i,\alpha_j$, $i\neq j$.
Thus $g_0$ can be extended to an involution $g_1$ of the union $\partial\HK\cup\uniD$ so that $g_1$ either fixes $v_i\cup \hat v_i$ or swaps it for $v_j\cup\hat v_j$, $j\neq i$. 
Since $\mathcal{D}$ cuts $\hkY$ into some $3$-balls, $g_1$ can be further extended, by the Alexander trick, to an involution $g_2$ on $\hkY$ so that $g_2$ either fixes $\sigma_i\cup\hat \sigma_i$ or swaps it 
for $\sigma_j\cup\hat\sigma_j$, for some $j\neq i$ (see Fig.\ \ref{fig:fix_swap}). 

By Lemma \ref{lm:involution}, there is an involution $g_3$ of $\Compl \hkY$ isotopic to $f$. The restriction $g_3\vert_{\partial \hkY}$ might not be identical to $g_0$, but they are isotopic. Hence by Lemma \ref{lm:surface_involutions}, there 
exists an ambient isotopy $\mu_t$ 
such that $\mu_t g_0 \mu_t^{-1}$ with $\mu_0=\id$ and $\mu_1 g_0 \mu_1^{-1}=g_3\vert_{\partial \hkY}$.  Let $\rnbhd{\partial \hkY}\simeq (\partial \hkY)\times [0,1]$ be a collar neighborhood
of $\partial \hkY\subset\Compl\HK$. Define $g_4$ to the composition  
\[
\overline{\Compl \hkY-\rnbhd{\partial \hkY}}\simeq \Compl\hkY\xrightarrow{g_3}\Compl\hkY \simeq \overline{\Compl \hkY-\rnbhd{\partial \hkY}},\]
together with $g_4(x,t):=\mu_t g_0 \mu_t^{-1}$, for $(x,t)\in (\partial \hkY)\times [0,1]\simeq \rnbhd{\partial\hkY}$. Then $g_4$ is an involution of $\Compl\hkY$ with $g_4\vert_{\partial\hkY}=g_0$.

Gluing $g_4, g_2$ along $\partial \hkY$  gives us an involution $g$ of $\sphere$ that preserves $\hkY$ and $\sgY$. 
Every orientation-reversing involution of $\sphere$ is conjugate to a reflection, thus the fixed point set of $g$ is either a $2$-sphere or a point. 
Since $g$ fixes $\sigma_1\cup\hat\sigma_1$, so it is the former; let $S$ be the fixed point set. Given $g$ either fixing $\sigma_i\cup\hat\sigma_i$ or swapping $\sigma_i\cup\hat\sigma_i,\sigma_j\cup\hat\sigma_j$, $i\neq j$, we have either $\sigma_i\cup\hat\sigma_i\subset S$ or  $S\cap (\sigma_i\cup\hat\sigma_i)=v$.
This implies, if $D$ is the disk bounded by $\sigma_1$ not containing $v$, then $\mathring{D}$ is disjoint from $\sgY$.   
Let $D':=Y\cap D$. Then the frontier of a regular neighborhood of the union of $D'$ and a component of $\partial A_1$ induces a compressing disk of $\partial_f X_1$, contradicting $\partial_f X_1$ is a $\p 3$-surface.

The above construction applies to the case \ref{itm:theta}, and thus $g_0$ can be extended to an involution on $\inducedhkY$ that fixes $\sgY$. This implies $\inducedsgY$ is planar and hence trivial, contradicting that 
$\partial_f X_1,\partial_f X_2$ are non-parallel.
\end{proof}
\begin{figure}[h!]
\begin{subfigure}{.48\linewidth}
\centering
\begin{overpic}[scale=.17,percent]{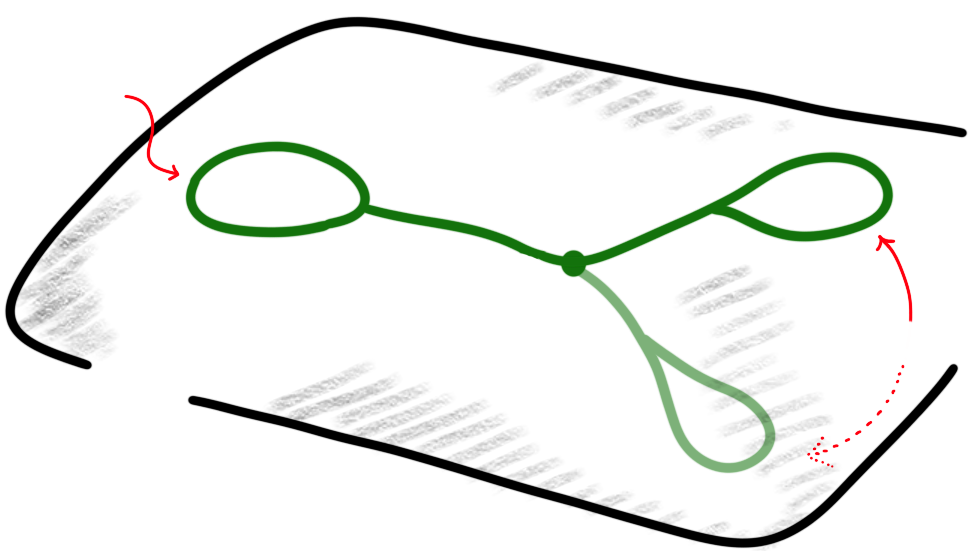}
\put(12,15){$S$}
\put(0.5,45.5){\footnotesize{fixed}}
\put(80.5,20){\footnotesize{swapped}}
\put(24,35){\footnotesize $D$}
\put(29,42.5){\footnotesize $\sigma_1$}
\put(45,34.6){\footnotesize $\hat\sigma_1$}
\put(90,40){\footnotesize $\sigma_i$}
\put(64,35){\footnotesize $\hat\sigma_i$}
\put(74,4.7){\footnotesize $\sigma_j$}
\put(59,20.6){\footnotesize $\hat\sigma_j$}
\put(57.4,31.4){\footnotesize $v$}
\end{overpic}
\caption{Fix $\sigma_1$; swap $\sigma_i,\sigma_j$.}
\label{fig:fix_swap}
\end{subfigure}
\begin{subfigure}{.48\linewidth}
\centering
\begin{overpic}[scale=.17,percent]{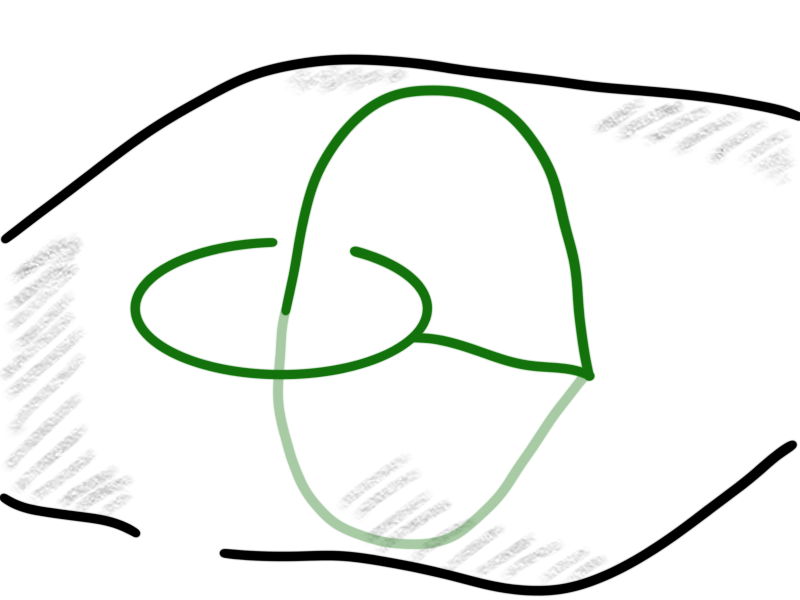}
\put(20,5){$S$}
\put(75,27.5){\small $v_1$}
\put(27.5,35.2){\small $v_2$}
\put(9,39){\small $\sigma_a$}
\put(59,32.7){\small $\sigma_f$}
\put(70.3,52){\small $\sigma_m$}
\end{overpic}
\caption{$\twoone$ and reflection sphere $S$.}
\label{fig:twoone_reflection}
\end{subfigure}
\caption{} 
\end{figure}

\subsection{Genus two classification} 
Following \cite{Mor:09}, we denote by $\twoone$, the spatial graph in Fig.\ \ref{fig:GammaX}.
\label{subsubsec:genus_two_classification}
\begin{lemma}\label{lm:g_two_orien_rev_one_end}
Suppose $g=2$, and $\Compl\HK$ is atoroidal.
If $f(X)=X$, for some end $X$ and $f\in \nAut{\sphere,\HK}$, then $\inducedsgX\simeq \twoone$.
\end{lemma}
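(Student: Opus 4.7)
By Lemma \ref{lm:orien_rev_some_end}, $\vert\systemP\vert=1$; write $\systemP=\{P\}$ and let $X'$ denote the other end. Since $e=g=2$, $\systemP$ is full and $\sgX$ is defined. Let $B_1,B_2$ be the two $3$-balls into which the $\p 3$-sphere $\Stwo$ divides $\sphere$, with $X\subset B_1$ and $X'\subset B_2$, and let $V_1, V_{21}, V_{23}, V_3$ be the pieces of $\HK$ from Section \ref{subsec:arcs}. Here $V_1, V_3$ are solid tori and $V_{21}, V_{23}$ are $3$-balls, and an Euler-characteristic computation shows that $B_2 = V_{21} \cup X' \cup V_3$ has $2$-sphere boundary, hence is a $3$-ball. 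The disks $D_1, D_2, D_3$ therefore cut $\hkX = \overline{\sphere - X}$ into $V_1$, $V_{23}$, and $B_2$; adding a loop at the $V_1$-vertex (to account for the non-ball piece) and smoothing the valence-two vertex coming from $V_{23}$ identifies $\sgX$ with the abstract handcuff graph: two vertices $v_1, v_c$ each carrying a loop, joined by one edge.

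\textbf{Step 2: promoting $f$ to an involution preserving all pieces.}
Atoroidality of $\Compl\HK$ plus $\p 3$-decomposability give $\Sym\HK$ finite by Lemma \ref{lm:finiteness}; since $f^2\in H_+$ and $f^2(X)=X$, Lemma \ref{lm:orien_pre_some_end} yields $f^2\simeq\id$, so $[f]$ has order two. By Theorem \ref{teo:uniqueness} and Lemma \ref{lm:fix_surface}, $f$ can be isotoped to preserve $P$. The middle circle $C_2$ of $\partial P$ must then be preserved, and swapping $C_1, C_3$ would swap $F_1\subset X$ with $F_3\subset X'$, contradicting $f(X)=X$; hence $f$ preserves each $C_i, D_i$ and each piece $V_j, A_{jk}, F_j, X, X'$ individually. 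Following the construction in the proof of Lemma \ref{lm:orien_rev_some_end} (Nielsen realization on $\partial\hkX$, Alexander trick on the ball pieces, Lemma \ref{lm:involution} on the remaining handlebody-knot complement, and collar-gluing of the resulting involutions via Lemma \ref{lm:surface_involutions}), we upgrade $f$ to an honest orientation-reversing involution $\tilde f$ of $(\sphere,\HK)$ preserving $P$, each piece, and hence the spine $\sgX$.

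\textbf{Step 3: identifying the embedding with $\twoone$.}
As an orientation-reversing involution of $\sphere$, $\tilde f$ is conjugate to a reflection, so its fixed set is a $2$-sphere $S$ (the isolated-fixed-point case is excluded because $\sgX$ is not null-homotopic). Since $\tilde f$ preserves the solid torus $V_1$ together with the puncture $F_1$, the intersection $S\cap V_1$ is a meridian disk of $V_1$ crossing the core loop at $v_1$ transversely in a single point; the analogous analysis on the balls $V_{23}$ and $B_2$ shows that $S$ meets each in a disk and transversely bisects the second loop at $v_c$. Thus $\sgX$ sits in $\sphere$ reflectively symmetric across $S$, with each handcuff loop crossing $S$ transversely in one point. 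Now combine this with the global hypotheses: the $2$-indecomposability of $\pair$ excludes the trivial (split) handcuff, since a disk-pair decomposing $\sgX$ would pull back to a $2$-decomposing surface for $\pair$; and the atoroidality of $\Compl\HK$ excludes handcuffs whose constituent link is a non-trivial torus link, as such a link would produce an essential torus in $\Compl\HK$. The only handcuff compatible with all three constraints is precisely $\twoone$ of Fig.\ \ref{fig:GammaX}, whose constituent link is the Hopf link; hence $\inducedsgX\simeq\twoone$.

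\textbf{Main obstacle.} Steps 1 and 2 are essentially bookkeeping built on Lemma \ref{lm:orien_rev_some_end} and the involution construction from its proof; the delicate point is Step 3, where one must translate the global hypotheses on $\pair$ (atoroidal, $2$-indecomposable) into sharp enough constraints on the spatial graph $(\sphere,\sgX)$ to eliminate every symmetric handcuff other than $\twoone$. A secondary technical subtlety is verifying that the reflection sphere $S$ is compatible with $\HK$, $P$, and all cut pieces simultaneously during the construction of $\tilde f$.
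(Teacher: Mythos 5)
Your Steps 1--2 follow the paper's route (order-two mapping class via Lemma \ref{lm:orien_pre_some_end}, then the involution construction from the proof of Lemma \ref{lm:orien_rev_some_end} to produce an honest orientation-reversing involution with fixed-point $2$-sphere $S$ preserving the spine), and the identification of the abstract spine as a handcuff is fine. The gap is in Step 3, and it is twofold. First, your geometric picture of how the spine meets $S$ is wrong: a loop cannot cross a $2$-sphere transversely in a single point (mod $2$ intersection parity), so ``each handcuff loop crossing $S$ transversely in one point'' cannot happen, and $S\cap V_1$ cannot be a meridian disk meeting the core once. In the correct picture (the paper's), the involution fixes the loop $\sigma_a$ (dual to the disk bounded by $\partial A$) and the edge $\sigma_f$ \emph{pointwise}, so $\sigma_a\cup\sigma_f\subset S$, while the other loop $\sigma_m$ is reversed and meets $S$ in exactly two points $v_1,v_2$.

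Second, the concluding elimination --- ``non-split, not a torus link, reflectively symmetric, hence $\twoone$'' --- is not a proof: there are many non-split, non-torus, linking-number-one handcuff embeddings admitting a reflection symmetry (e.g.\ with a knotted constituent loop realized as a symmetric union, or with the two loops forming a symmetric $2$-bridge link other than the Hopf link), and you give no argument excluding them. The paper pins down $\twoone$ by three specific facts you would need to supply: (a) $\sigma_a\subset S$ is unknotted and cuts $S$ into two disks; (b) $S$ cuts $\sigma_m$ into two arcs, each forming a \emph{trivial} arc--ball pair by atoroidality of $\inducedhkX$, so $\sigma_m$ is unknotted; (c) the two points $v_1,v_2$ of $\sigma_m\cap S$ lie in different components of $S-\sigma_a$ --- otherwise $\sigma_a$ bounds a disk in $S$ with interior disjoint from $\sgX$, contradicting incompressibility of $\partial_f X$ --- so $\op{lk}(\sigma_a,\sigma_m)=\pm1$ and $\sigma_a\cup\sigma_m$ is a Hopf link with $\sigma_f\subset S$ its tunnel. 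Your invocation of atoroidality (to rule out ``torus links'') and $2$-indecomposability (to rule out the split handcuff) does not substitute for (a)--(c).
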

\begin{proof}
Recall that by \cite{FunKod:20}, $\Sym\HK$ is finite.
By Lemma \ref{lm:orien_pre_some_end}, 
$[f]$ is of order two, and hence 
$[f_X]\in \MCG{\sphere,\hkX}$ is of order two. By the Nielsen realization, there exists a hyperbolic structure on $\partial\hkX$ such that $f_X\vert_{\partial\hkX}$ is isotopic to an isometry $g_0$ with $g_0^2=\id$. 

Since $X\cap\HK$ is an annulus $A$ and a once-punctured torus $F$, there is, up to isotopy, a unique non-separating loop $m$ in $F$ disjoint from $\partial F$ and bounds a disk in $\hkX$. Let $\sigma_a$ (resp.\ $\sigma_f$) be the loop (resp.\ the edge) dual to a disk bounded by the geodesic parallel to $\partial A$ (resp.\ $\partial F$) and by $\sigma_m$ the loop dual to a disk in $\hkX$ bounded by the geodesic parallel to $m$. The union $\sigma_a\cup\sigma_f\cup\sigma_m$ gives us the canonical spine $\sgX$ of $\hkX$.
Applying the same construction as in the proof of Lemma \ref{lm:orien_rev_some_end},  
we obtain an involution $g$ of $\sphere$ that preserves $\hkX$ and $\sgX$ and fixes $\sigma_a,\sigma_f$, and preserves $\sigma_m$ (see Fig.\ \ref{fig:twoone_reflection}). 
 
By the incompressibility of $\partial_f X$, $\inducedsgX$ 
is non-trivial, so $g$ cannot fix $\sigma_m$, and thereby $g\vert_{\sigma_m}$ is orientation-reversing with two fixed points $v_1,v_2$, one of which, say 
$v_1$ is $\sigma_m\cap \sigma_f$.
The fixed point set $\Stwo$, which is a $2$-sphere, of $g$ cuts the knot $(\sphere,\sigma_m)$ into two arc-ball pairs; both are trivial by the atoroidality of $\inducedhkX$.

Lastly, $\sigma_a$ cuts $\Stwo$ into two disks, and $v_1$ and $v_2$ are not in the same disk, for otherwise $\sigma_a$ bounds in $\Stwo$ a disk whose interior disjoint from $\sgX$, contradicting the incompressibility of $\partial_f X$. As a result, $\sigma_a\cup \sigma_m\subset \sphere$ is a Hopf link, and $\sigma_f$ is its tunnel, so $(\sphere,\sgX)$ is the spatial graph $\twoone$.  
\end{proof}
\begin{lemma}\label{lm:g_two_orien_rev_some_end}
Suppose $g=2$, and $\Compl\HK$ is atoroidal.
If $f(X)=X$, for some end $X$ and $f\in \nAut{\sphere,\HK}$, then $\pair\simeq \fourone$.
\end{lemma}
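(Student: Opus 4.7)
The plan is to reduce the problem to the two--ended case using the preceding lemmas, and then recognise $\pair$ by reconstructing it from its induced pieces.

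By Lemma \ref{lm:orien_rev_some_end}, the hypotheses ($f$ orientation--reversing and $f(X)=X$ for some end $X$) force $\vert\systemP\vert=1$. Thus $\pair$ has exactly two ends $X,X'$ meeting along the unique $\p 3$--surface $P=\partial_f X=\partial_f X'$, and since $f$ permutes the set of ends while fixing $X$ it must also fix $X'$. Applying Lemma \ref{lm:g_two_orien_rev_one_end} separately to $X$ and to $X'$, both via the same map $f$, yields $\inducedsgX\simeq\inducedsg{X'}\simeq\twoone$.

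It then remains to identify $\pair$. Since each induced handlebody--knot $(\sphere,\hkX),(\sphere,\hk{X'})$ is equivalent to the standard $(\sphere,V)$ with $V$ a regular neighbourhood of $\twoone$, $\pair$ is recovered by gluing two copies of $(\sphere,V)$ along $P$. Its equivalence class is therefore determined by the induced bijection between the three parallel boundary circles of $P$ on the two sides, modulo the subgroup of $\Sym V$ preserving $P$. Exploiting the symmetries of the $\twoone$ pair---in particular the cyclic (and reflective) action on the three parallel disks of a $\p 3$--surface in $V$, together with an orientation--reversing involution coming from the amphicheirality of the underlying Hopf link plus tunnel---one checks that all admissible gluings are related by elements of $\Sym V$ preserving $P$, so the gluing is unique up to equivalence. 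A direct inspection of the standard picture of $\fourone$ (see Fig.~\ref{fig:decomposition_i}) shows that it is precisely this gluing of two $\twoone$--ends, whence $\pair\simeq\fourone$.

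The main obstacle is the gluing--uniqueness step. It requires a careful enumeration of how the three boundary circles of $P$ can be identified between the two copies of $(\sphere,V)$, and a verification that each admissible matching is realised by an element of $\Sym V$ stabilising $P$. A cleaner alternative, which I would attempt if the direct enumeration becomes cumbersome, is to observe that $\fourone$ itself is $2$--indecomposable with atoroidal exterior and admits a $\p 3$--decomposition into two $\twoone$--ends (again by Fig.~\ref{fig:decomposition_i}); then, since Theorem~\ref{teo:uniqueness} makes the maximal $\p 3$--system of any such $\pair$ unique up to isotopy, and both $\pair$ and $\fourone$ are reconstructed from the same ordered pair of induced handlebody--knots, they must be equivalent.
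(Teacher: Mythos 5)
Your argument is essentially the paper's own: Lemma \ref{lm:orien_rev_some_end} gives $\vert\systemP\vert=1$, so the two ends share the single $\p 3$-surface and are both preserved by $f$, and Lemma \ref{lm:g_two_orien_rev_one_end}, applied to each end, identifies both induced spatial graphs with $\twoone$; the paper then concludes directly that $\pair$ is a regular neighbourhood of the $3$-connected sum of two copies of $\twoone$, i.e.\ $\fourone$ --- precisely the gluing step you isolate, which the paper leaves implicit. One caution: your ``cleaner alternative'' does not work as stated, since Theorem \ref{teo:uniqueness} asserts uniqueness of the maximal $\p 3$-system inside a \emph{fixed} handlebody-knot, and knowing that $\pair$ and $\fourone$ decompose into the same pair of induced pieces does not by itself determine the gluing; so you should rely on your first route, checking that the Dehn twists along the three boundary circles and the swap of the two outer disks are absorbed by self-homeomorphisms of the $\twoone$ tangle.
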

\begin{proof}
By Lemma \ref{lm:orien_rev_some_end}, $\partial_f X_1=\partial_f X_2$, and hence by Lemma \ref{lm:g_two_orien_rev_one_end}, 
$\pair$ is a regular neighborhood of 
the $3$-connected sum of $\twoone$.  
\end{proof}

\begin{theorem}\label{teo:genus_two_symmetry}
Suppose $g=2$, and $\Compl\HK$ is atoroidal.
Then
\[\pSym \HK<\Z_2,\quad \Sym{\HK}<\Z_2\oplus \Z_2,\]
and $\Sym{\HK}\simeq\Z_2\oplus \Z_2$ 
if and only if $\pair$ is $\fourone$.   
\end{theorem}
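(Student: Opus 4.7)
The plan is to analyze the permutation action of $H := \Sym\HK$ on the set of ends of the $\p 3$-decomposition, applying Lemmas \ref{lm:orien_pre_some_end}, \ref{lm:orien_rev_some_end}, and \ref{lm:g_two_orien_rev_some_end} case by case. By Corollary \ref{cor:genus_two_b_irre}, $\Compl\HK$ is automatically $\partial$-irreducible, so Theorem \ref{teo:uniqueness} supplies a unique maximal $\p 3$-system $\systemP$ up to isotopy, and by Lemma \ref{lm:fix_surface} every element of $H$ is represented by a homeomorphism preserving $\uniP$, hence permuting the ends. Since $g=2$ and each end contributes a once-punctured closed surface of positive genus (Lemma \ref{lm:end}, together with the essentiality of the boundary circles of any $\p 3$-surface) summing to $g$, there are exactly $e=2$ ends, say $X_1$ and $X_2$. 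By Lemma \ref{lm:orien_pre_some_end}, a non-trivial $f \in H_+$ cannot preserve either end, so the induced action of $H_+$ on $\{X_1, X_2\}$ is faithful, giving $\pSym\HK < \Z_2$; combined with the orientation homomorphism $\Sym\HK \to \Z_2$, this yields $|H| \leq 4$.

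To upgrade the bound to $H < \Z_2 \oplus \Z_2$ rather than $\Z_4$, I would check that $H$ has exponent two. For any $f \in H$, $f^2 \in H_+$ preserves each end (since $f$ induces an element of order at most two on the two-element set of ends), so $f^2 \simeq \id$ by Lemma \ref{lm:orien_pre_some_end}. Hence every element of $H$ has order dividing two, making $H$ abelian of exponent two, so $H$ embeds into $\Z_2 \oplus \Z_2$.

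For the characterization as $\fourone$, suppose $|H| = 4$, so $H_-$ is non-empty; pick $g \in H_-$. If $g$ preserves some end, Lemma \ref{lm:g_two_orien_rev_some_end} immediately gives $\pair \simeq \fourone$. Otherwise $g$ swaps the ends, and taking the non-trivial $f \in H_+$, which by the argument above also swaps them, the composition $fg \in H_-$ fixes each end, and Lemma \ref{lm:g_two_orien_rev_some_end} again yields $\pair \simeq \fourone$. The converse requires exhibiting two commuting involutions on $\fourone$: an orientation-preserving $\pi$-rotation swapping the ends and an orientation-reversing reflection fixing each end (the latter being of the type produced by Lemma \ref{lm:g_two_orien_rev_one_end}). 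I expect this realization half to be the main obstacle, since the rest of the argument merely excludes possibilities; concretely it can be read off from the symmetric diagram of $\fourone$ underlying Fig.\ \ref{fig:decomposition_i}, after which the bound $|H| \leq 4$ from the second paragraph forces $\Sym\HK \simeq \Z_2 \oplus \Z_2$.
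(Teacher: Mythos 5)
Your proposal is correct and follows essentially the same route as the paper: count the two ends, use Lemma \ref{lm:orien_pre_some_end} both to make the $H_+$-action on the ends faithful and to show every square is trivial (hence exponent two rather than $\Z_4$), and invoke Lemma \ref{lm:g_two_orien_rev_some_end} for the $\fourone$ characterization. The only divergence is the realization half of the converse, where the paper simply cites \cite{Wan:24} for $\Sym{\fourone}\simeq\Z_2\oplus\Z_2$ instead of constructing the two commuting involutions from the symmetric diagram as you propose.
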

\begin{proof} 
There are two ends in a maximal $\p 3$-decomposition, and 
every mapping class $[f]\in\pSym\HK$ 
induces a permutation on the set of ends, and hence
there is a homomorphism 
$\pSym\HK\rightarrow \Z_2$, which is injective by Lemma \ref{lm:orien_pre_some_end}, so 
$\pSym\HK<\Z_2$. 
On the other hand, for every orientation-reversing homeomorphism $f$, $f^2$ is orientation-preserving and preserves both ends, so $f^2\simeq \id$. This implies $\Sym\HK<\Z_2\oplus\Z_2$.
For the third assertion, we note first the symmetry group of $\fourone$ is $\Z_2\oplus \Z_2$ by \cite{Wan:24}. On the other hand, if $\Sym\HK\simeq \Z_2\oplus \Z_2$, then there exists an orientation-reversing 
homeomorphism $f$ preserving $X_1,X_2$; therefore $\pair\simeq \fourone$ 
by Lemma \ref{lm:g_two_orien_rev_some_end}.
\end{proof}

\subsubsection*{Examples}
Here we construct examples realizing
the symmetry groups in Theorem \ref{teo:genus_two_symmetry}. 
Consider the family of handlebody-knots $(\sphere,V_{m,n})$ in Fig.\ \ref{fig:Vmn} with $m,n$ indicating the numbers of full twists. By Corollary \ref{cor:genus_two_b_irre}, $\Compl{V_{m,n}}$ is $\partial$-irreducible.

Now, consider the $3$-decomposing surface $P\subset\Compl{V_{m,n}}$ in Fig.\ \ref{fig:Vmn}, and note that components of
$\partial P$ are parallel in $\partial V_{m,n}$ and hence $P$ is a $\p 3$-surface. Observe also that $P$ decomposes
$\Compl{V_{m,n}}$ into two components
$X,Y$, which induce two spatial graphs 
$\inducedsgX,\inducedsgY$ (see Fig.\ \ref{fig:inducedsg_Vmn}) whose constituent links are non-split. 
Thus $P$ is incompressible, and 
$\{P\}$ is a maximal $\p 3$-system. 
%
In addition, $X,Y$ both are handlebodies and hence atoroidal, 
so $\Compl{V_{m,n}}$ 
is atoroidal by Lemma \ref{lm:tori}. This implies $(\sphere,V_{m,n})$ is $\p 2$-indecomposable and hence $2$-indecomposable, given $g=2$.

If $\vert m\vert=\vert n\vert=1$, then $\pairVmn\simeq\fourone$; if $\vert m\vert\neq \vert n\vert$, then $\Sym{V_{m,n}}=\mathbf{1}$. If $m=n\neq \pm 1$ (resp.\ $m=-n\neq \pm 1$), $\pairVmn$ is chiral (resp.\ amphichiral) with $\Sym{V_{m,n}}\simeq \Z_2$. Thus all possibilities in Theorem \ref{teo:genus_two_symmetry} are realized. 

Theorem \ref{teo:genus_two_symmetry} can be used to detect handlebody-knot chirality. For instance, the positive symmetry group of 
$\sixten$ has a non-trivial mapping class given by the rotation that swaps the two $\p 3$-surfaces $P_1,P_2$; see Fig.\ \ref{fig:sixten}. Since $\sixten\not\simeq\fourone$, we have $\sixten$ is chiral.

\begin{figure}[h]
\begin{subfigure}[b]{.34\linewidth}
\centering
\begin{overpic}[scale=.12,percent]{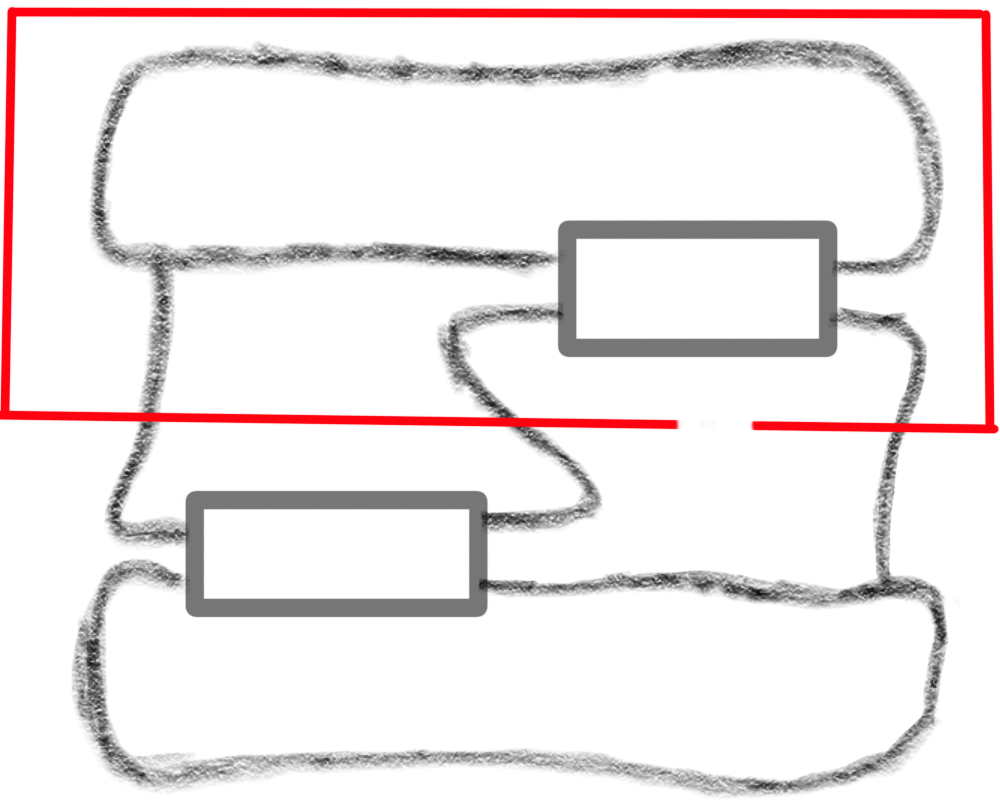}
\put(69,36){$P$}
\put(2.5,71){$X$}´
\put(1,1){$Y$}
\put(67,49){\Large $m$}
\put(31,22.5){\Large $n$}
\end{overpic}
\caption{Family $\pairVmn$.}
\label{fig:Vmn}
\end{subfigure}
\begin{subfigure}[b]{.3\linewidth}
\centering
\begin{overpic}[scale=.08,percent]{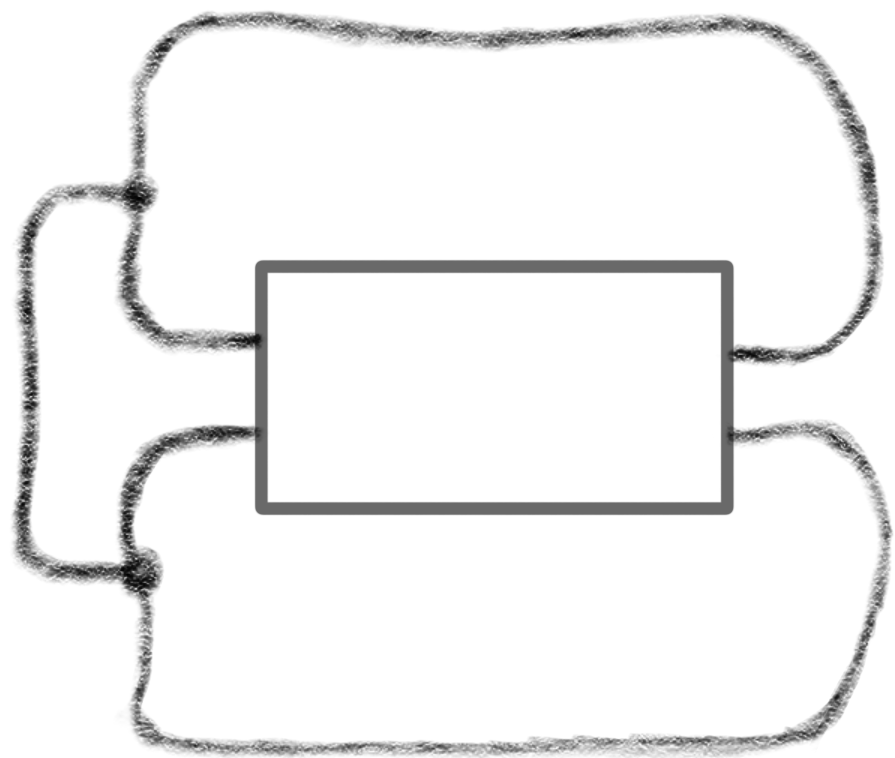}
\put(51,37){\Large $k$}
\end{overpic}
\caption{$k=m$ or $n$.}
\label{fig:inducedsg_Vmn}
\end{subfigure}
\begin{subfigure}[b]{.34\linewidth}
\centering
\begin{overpic}[scale=.12,percent]{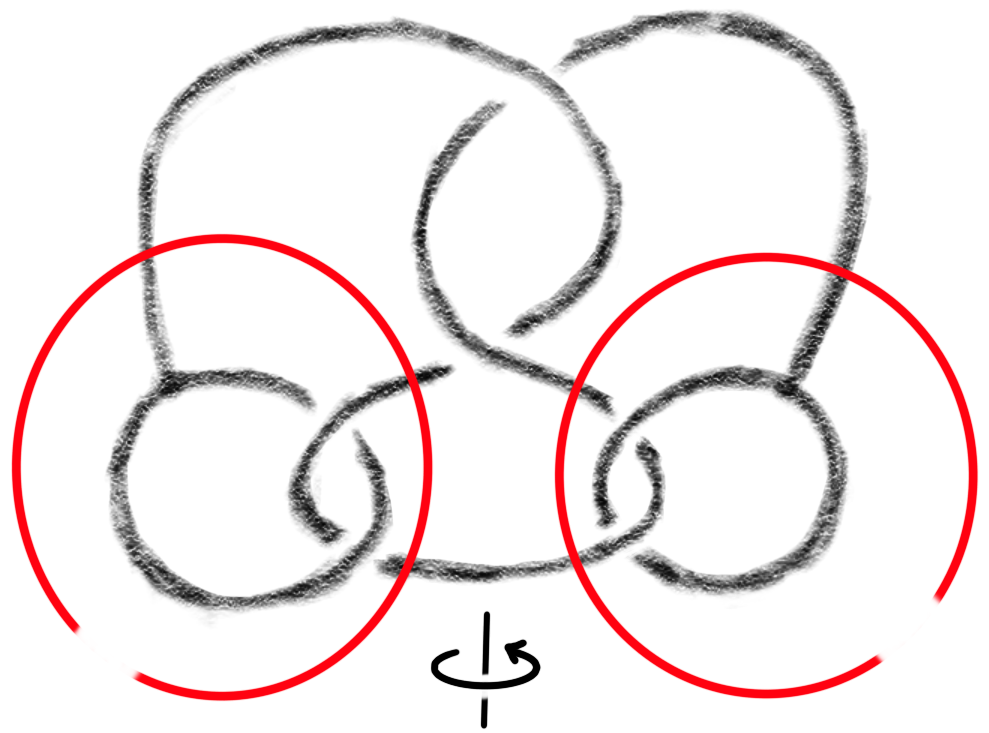}
\put(90,5){$P_2$}
\put(5,3){$P_1$}
\end{overpic}
\caption{Symmetry of $\sixten$.}
\label{fig:sixten}
\end{subfigure}
\caption{}
\end{figure} 

\subsection{Higher genus classification}\label{subsubsec:higher_genus_classification}
\begin{theorem}\label{teo:higher_genus_symmetry}
If $g>2$, then $\vert H\vert$ divides $e$. 
\end{theorem}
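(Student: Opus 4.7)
The plan is to exhibit a free action of $H$ on the set of ends $\{X_1,\dots,X_e\}$ and conclude by orbit–stabilizer: if the action is free, each orbit has size $\vert H\vert$, so $\vert H\vert$ divides $e$.

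By Theorem \ref{teo:uniqueness}, the maximal $\p 3$-system $\systemP$ is unique up to ambient isotopy, so by Lemma \ref{lm:fix_surface} every mapping class in $H$ is represented by a self-homeomorphism of $\pair$ preserving $\uniP$ set-wise. This yields a permutation action $\rho\colon H\to\mathrm{Sym}(\{X_1,\dots,X_e\})$, since any such homeomorphism permutes the components of $\Compl\HK-\uniP$ and, by Theorem \ref{teo:ends_connectors}, must send ends to ends (ends are characterized intrinsically by the topology of $X\cap\HK$).

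To establish freeness I would suppose $[h]\in H$ fixes some end $X_i$ and split into cases by orientation. If $h\in H_+$, then Lemma \ref{lm:orien_pre_some_end} immediately gives $h\simeq\id$. If $h\in H_-$, then Lemma \ref{lm:orien_rev_some_end} forces $\vert\systemP\vert=1$, so $e=2$ and both ends meet the unique $\p3$-surface $P$. Provided $\vert\systemP\vert\geq 2$ the argument is complete, since then no element of $H_-$ can fix any end, and the previous item disposes of $H_+$; the orbits partition $\{X_1,\dots,X_e\}$ into pieces of size $\vert H\vert$.

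The main obstacle is the residual case $\vert\systemP\vert=1$, $e=2$. Here $\rho$ lands in $\Z_2$, and I would argue that $\vert H\vert\leq 2$ as follows. The kernel $H_0$ of $\rho$ meets $H_+$ only in the identity (Lemma \ref{lm:orien_pre_some_end}), while any two orientation-reversing members of $H_0$ must coincide (their product is orientation-preserving and fixes both ends, hence trivial); so $\vert H_0\vert\leq 2$ and $\vert H\vert\leq 4$. It remains to exclude $\vert H\vert=4$: in that scenario $H\simeq\Z_2\oplus\Z_2$ would contain an orientation-preserving involution $f_0$ swapping the ends, forcing $g_1=g_2=g/2$ and $g$ even (so $g\ge 4$), together with a commuting orientation-reversing involution $h_0\in H_-\cap H_0$ fixing both ends. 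The plan would be to replay the Nielsen-realization/spine-extension construction of Lemma \ref{lm:orien_rev_some_end} on $h_0$: realizing $h_0$ as an actual involution of $\pair$, its fixed-set 2-sphere (necessarily a reflection sphere) would be $f_0$-invariant, and the symmetry forced by $f_0$ on the ends would make this fixed sphere intersect the canonical spine of $\HK$ in a pattern producing a compressing disk of $P$, contradicting that $P$ is a $\p 3$-surface. This last step is where the hypothesis $g>2$ is genuinely used — the argument degenerates precisely for the $\fourone$-type configuration of genus two. Once this case is closed, $\rho$ is free and orbit–stabilizer completes the proof.
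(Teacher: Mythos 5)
Your overall strategy coincides with the paper's: its entire proof consists of asserting that $H$ acts freely on the set of ends (citing Lemma \ref{lm:orien_pre_some_end}) and then invoking Burnside's lemma. You are right to observe that Lemma \ref{lm:orien_pre_some_end} only controls $H_+$, and your use of Lemma \ref{lm:orien_rev_some_end} to reduce the orientation-reversing case to the residual configuration $\vert\systemP\vert=1$, $e=2$ is correct, as is the group-theoretic reduction there to excluding $\vert H\vert=4$ (forcing $H\simeq\Z_2\oplus\Z_2$ with $f_0\in H_+$ swapping the two ends and $h_0\in H_-$ fixing both).

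The gap is that this last exclusion --- the only place where the hypothesis $g>2$ can actually enter, since for $g=2$ the configuration is realized by $\fourone$ with symmetry group $\Z_2\oplus\Z_2$ --- is left as a plan rather than an argument. The construction you propose to replay does not transfer as stated: in Lemma \ref{lm:orien_rev_some_end} the involution is assembled on the non-end component $Y$ using the spine $\sgY$ dual to its annuli, but in the residual case $Y=\emptyset$; and in Lemma \ref{lm:g_two_orien_rev_one_end} the extension over an end uses that $X\cap\HK$ contains a once-punctured \emph{torus}, which pins down a unique non-separating compressing curve $m$ and hence a canonical spine of $\hkX$ --- whereas here $f_0$ forces $g_1=g_2=g/2\geq 2$, so no such canonical curve or spine exists, and the claimed ``pattern producing a compressing disk of $P$'' is not substantiated. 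Until that case is closed, freeness of the action, and hence the divisibility $\vert H\vert\mid e$, is not established. (The paper's own two-line proof is silent on exactly this point, so you have correctly located where the real content of the theorem lies; but the proposal as written does not supply it.)
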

\begin{proof}
By Lemma \ref{lm:orien_pre_some_end}, $H$ acts freely on the set of ends. The assertion then follows from Burnside's Lemma.
\end{proof}

\begin{corollary}\label{cor:odd_prime_symmtery}
If $\Compl\HK$ is atoroidal and $e=g>2$ is prime, then $\Sym\HK=\pSym\HK$, and it is either trivial or $\mathbb{Z}_g$. 
\end{corollary}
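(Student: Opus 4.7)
The plan is to combine three inputs already available in the excerpt. First, because the standing assumptions of Section~\ref{subsec:symmetry} force $\pair$ to be $2$-indecomposable and semi-fully $\p 3$-decomposable with $\Compl\HK$ being $\partial$-irreducible, and because the hypotheses of the corollary add $e=g$ (fullness) and atoroidality, Lemma~\ref{lm:finiteness} applies and tells us that $\Sym\HK$ is a finite group. In particular, we may take $H=\Sym\HK$ as a finite subgroup of itself.

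Second, Theorem~\ref{teo:higher_genus_symmetry} then yields that $|\Sym\HK|$ divides $e=g$. Since $g$ is prime, this leaves only the two possibilities $|\Sym\HK|=1$ or $|\Sym\HK|=g$. A group of prime order is necessarily cyclic, so in the non-trivial case $\Sym\HK\simeq\mathbb{Z}_g$.

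Third, to rule out orientation-reversing symmetries, note that $\pSym\HK$ is the kernel of the orientation character $\Sym\HK\to\mathbb{Z}_2$, so $[\Sym\HK:\pSym\HK]\in\{1,2\}$. If this index were $2$, then $|\pSym\HK|=g/2$, which is not an integer because $g$ is an odd prime (recall $g>2$). Hence the index is $1$, i.e.\ $\Sym\HK=\pSym\HK$. Assembling the three steps gives the claimed dichotomy. There is no real obstacle here; the corollary is a direct arithmetic consequence of Lemma~\ref{lm:finiteness} and Theorem~\ref{teo:higher_genus_symmetry} together with the primality of $g$.
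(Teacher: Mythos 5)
Your proposal is correct and follows essentially the same route as the paper: finiteness via Lemma~\ref{lm:finiteness}, divisibility of $\vert\Sym\HK\vert$ by $e=g$ via Theorem~\ref{teo:higher_genus_symmetry}, and the oddness of the prime $g$ to exclude orientation-reversing classes. The paper phrases the last step as ``an orientation-reversing mapping class has even order'' rather than your index-two Lagrange argument, but the two are interchangeable.
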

\begin{proof}
By Lemma \ref{lm:finiteness}, $\Sym\HK$ 
is finite, and 
by Theorem \ref{teo:higher_genus_symmetry}, $\vert\Sym\HK\vert$ divides $e$. 
Since $e$ is an odd prime and the mapping class of an orientation-reversing homeomorphism has an even order, the assertion follows. 
\end{proof}
 
\subsubsection*{Examples}
Recall that the genus $g>2$ handlebody-knot $\pairwheel$ in Fig.\ \ref{fig:pairwheel}, which satisfies the assumption of Section \ref{subsec:symmetry} by Lemma \ref{lm:example_birre_atoro_twoindecom}. 
By Theorem \ref{teo:higher_genus_symmetry}, the rotation $r$ by $\frac{2\pi}{g}$ against the axis perpendicular to the paper generates $\Sym{\wheel}$. 
\cout{
Now, if there exists an orientation-reversing homeomorphism $g$, then by the transitivity of the group action $H$, there exists $k$ such that $g\circ r^k$ preserves some end, contradicting Lemma \ref{lm:orien_rev_some_end}. 
If $f$ is an orientation-preserving homeomorphism. Then $f\circ r^k$ preserves some end. Thus by Lemma \ref{lm:orien_pre_some_end}, 
$f\simeq r^{-k}$, so 
$\Sym{U_g}=\pSym{U_g}\simeq \Z_g$. 
}
\cout{
\subsection{Handlebody-knots with homeomorphic exteriors}\label{subsec:motto_hks}
Recall that Motto's handlebody-knots are constructed as follows. Consider the handlebody-knot $\sixone=\pairV$ 
in Fig.\ \ref{fig:sixone} and the annulus $A$ that meets $V$ at one disk; see Fig.\ \ref{fig:twisting_annulus}. 
Then twisting along $A$ $n$ times gives us an infinite family $\{\pairVn\}_{n\in\mathbb{Z}}$ (see Fig.\ \ref{fig:motto_hks_P}). 


Now observe that $\pairVn$ admits a unique $\p 3$-surface $P$ as shown in Fig.\ \ref{fig:motto_hks_P}, which cuts $\Compl{V_n}$ into two parts $X,Y$. The spatial graph induced by $(\sphere,V^X)$ (resp.\ of $(\sphere,V^Y)$) is $G_n$ in Fig.\ \ref{fig:induced_sg_link} (resp.\ $2_1$ in the spatial graph table \cite{Mor:09}). Thus the same argument in Subsection \ref{subsubsec:hyp} applies and show handlebody-knots in the family are mutually inequivalent.  
\begin{figure}[h]
\begin{subfigure}{.33\linewidth}
\centering
\begin{overpic}[scale=.25,percent]{hk6_1}
\end{overpic}
\caption{$\sixone=\pairV$.}
\label{fig:sixone}
\end{subfigure}
\begin{subfigure}{.32\linewidth}
\centering
\begin{overpic}[scale=.25,percent]{hk6_1.vertical}
\put(82,50){\Large $A$}
\end{overpic}
\caption{Twisting annulus $A$.}
\label{fig:twisting_annulus}
\end{subfigure}
\cout{
	\begin{subfigure}{.32\linewidth}
\centering
\begin{overpic}[scale=.13,percent]{motto_hks}
\put(27,47){\Large$n$}
\end{overpic}
\caption{$\pairVn$}
\label{fig:motto_hks}
\end{subfigure} 
}
\begin{subfigure}{.32\linewidth}
\centering
\begin{overpic}[scale=.13,percent]{motto_hks_P}
\put(29,48){\Large$n$}
\put(92,22){$P$}
\end{overpic}
\caption{$\p 3$ surface.}
\label{fig:motto_hks_P}
\end{subfigure}
\end{figure}
}







\end{document}